\def\blfootnote{\xdef\@thefnmark{}\@footnotetext}
\newtheorem*{theorems}{Theorem A}
\newtheorem*{corollaryv}{Corollary B}
\newtheorem*{corollaryv2}{Corollary C}
\newtheorem*{proposition*}{Proposition}
\newtheorem{theorem}{Theorem}[section]
\newtheorem{lemma}[theorem]{Lemma}
\newtheorem{proposition}[theorem]{Proposition}
\newtheorem{corollary}[theorem]{Corollary}
\theoremstyle{definition}
\newtheorem{example}[theorem]{Example}
\newtheorem*{definition*}{Definition}
\newtheorem*{example*}{Example}
\numberwithin{equation}{section}
\begin{document}

\title{Local--global generation property of commutators\\ in finite $\pi$-soluble groups}

\dedicatory{\large To the memory of Marty Isaacs}

\author{Cristina Acciarri}
\address{C.~Acciarri: Dipartimento di Scienze Fisiche, Informatiche e Matematiche, Universit\`a degli Studi di Modena e Reggio Emilia, Via Campi 213/b, I-41125 Modena, Italy}
\email{cristina.acciarri@unimore.it}

\author{Robert M. Guralnick}
\address{Robert M. Guralnick: Department of Mathematics, University of
Southern California, Los Angeles, CA90089-2532, USA}
\email{guralnic@usc.edu}

\author{Evgeny Khukhro}
\address{E. I. Khukhro: Charlotte Scott Research Centre for Algebra, University of Lincoln, U.K.}
\email{khukhro@yahoo.co.uk}

\author{Pavel Shumyatsky}

\address{P. Shumyatsky: Department of Mathematics, University of Brasilia, DF~70910-900, Brazil}
\email{pavel@unb.br}

\thanks{The first author is a member of ``National Group for Algebraic and Geometric Structures, and Their Applications'' (GNSAGA–INdAM). The second author was partially supported by NSF grant DMS 1901595 and Simons Foundation Fellowship 609771.
The third author was partially supported by the International Center for Mathematics at SUSTech in Shenzhen. The fourth author was partially supported by  FAPDF and CNPq.}
\keywords{Finite groups; automorphism; rank}
\subjclass[2020]{Primary 20D20; Secondary 20D45}

\begin{abstract}
 For a group $A$ acting by automorphisms on a group $G$, let $I_G(A)$ denote the set of commutators $[g,a]=g^{-1}g^a$, where $g\in G$ and $a\in A$, so that $[G,A]$ is the subgroup generated by $I_G(A)$. We prove that if $A$ is a $\pi$-group of automorphisms of a $\pi$-soluble finite group $G$ such that any subset of $I_G(A)$ generates a subgroup that can be generated by $r$ elements, then the rank of $[G,A]$ is bounded in terms of $r$. Examples show that such a result does  not hold without the assumption of $\pi$-solubility.  Earlier we obtained this type of results
 for groups of coprime automorphisms and for Sylow $p$-subgroups of $p$-soluble groups.
\end{abstract}

\maketitle

\section*{Introduction}

A `local--global' generation property with respect to cardinality is well-known for commutators in any, not necessarily finite, group: if there are only finitely many, say, $m$, commutators in a group~$G$, then $[G,G]$ is finite of order bounded in terms of~$m$. The proof follows from Schur's theorem about a group with finite central quotient.
A similar application of Schur's theorem proves the same property for a group of
automorphisms $A$ of a group $G$: if the set of commutators $[g,a]=g^{-1}g^a$ is finite of cardinality $m$,  then  $|[G,A]|$ is finite of order bounded in terms of $m$;
see Proposition~\ref{p-vved} at the end of the Introduction.

In this paper we prove `local--global' generation properties of commutators with respect to rank, extending the results of \cite{agks,tams} to non-coprime automorphisms. Here, the rank of a finite group $G$ is the least positive integer $r$ such that every subgroup of $G$ can be generated by $r$ elements. (This parameter is also called the  Pr\"ufer rank.) For a group of automorphisms (or a subgroup) $A$ of a group $G$, let $I_G(A)=\{[g,a]\mid g\in G,\; a\in A\}$ be the set of commutators  $[g,a]=g^{-1}g^a$.   The subgroup generated by $I_G(A)$ is precisely the commutator subgroup $[G,A]$. Recall that, for a set of primes $\pi$, a finite group is said to be $\pi$-soluble if it has a normal series whose factors are either $\pi'$-groups or soluble $\pi$-groups.

\begin{theorems}\label{t-vsol} Suppose that $A$ is a $\pi$-group of automorphisms of a finite $\pi$-soluble group $G$ such that any subset of $I_G(A)$ generates a subgroup that can be generated by $r$ elements. Then the rank of $[G,A]$ is bounded in terms of $r$.
\end{theorems}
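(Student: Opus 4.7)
The plan is to prove the theorem by induction on $|G|$, using chief-factor analysis in the semidirect product $\Gamma = G \rtimes A$ together with the two earlier results cited in the introduction. Form $\Gamma$, which is $\pi$-soluble since both $G$ and $A$ are, and note that $[G,A]$ is normal in $\Gamma$ (it is $A$-invariant and closed under $G$-conjugation via $[g,a]^h = [gh,a][h,a]^{-1}$). Take a minimal normal subgroup $N$ of $\Gamma$ contained in $G$; by $\pi$-solubility, either (a) $N$ is a direct product of isomorphic simple $\pi'$-groups, or (b) $N$ is an elementary abelian $p$-group with $p \in \pi$. By the inductive hypothesis applied to $G/N$ (the hypothesis descends to quotients, since lifts of commutators are commutators and $d$ cannot grow under quotienting), the rank of $[G/N, A] = [G,A]N/N$ is bounded in terms of $r$. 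Since rank is subadditive along normal subgroups, it remains to bound the rank of $[G,A] \cap N$. But this intersection is $\Gamma$-invariant, so by minimality of $N$ it equals $0$ or $N$; only the case $N \le [G,A]$ is nontrivial, and there the task reduces to bounding the rank of $N$.

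In case (a), $A$ acts coprimely on the $\pi'$-group $N$, so the coprime result of \cite{tams} applied to $A$ acting on $N$, with the hypothesis restricted to $I_N(A) \subseteq I_G(A)$, bounds the rank of $[N,A]$ in terms of $r$. In case (b), I would select an $A$-invariant Sylow $p$-subgroup $P$ of $G$ containing $N$ (which exists because $G$ is $p$-soluble), and invoke the Sylow $p$-subgroup result of \cite{agks} applied to $P$ with the induced $A$-action, bounding the rank of $[P,A]$ and hence controlling $N \le P$. Provided $A$ acts nontrivially on $N$, the minimality of $N$ as a $\Gamma$-invariant subgroup forces $[N,A] = N$, and the bounds above yield the desired bound on the rank of $N$.

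The main obstacle is the subcase of $N \le [G,A]$ where $A$ acts trivially on $N$, so $[N,A] = 1$ and a direct invocation of \cite{tams} or \cite{agks} gives no information on the rank of $N$. To handle this I would use that a commutator $[g,a]$ lies in $N$ exactly when $gN$ is $a$-fixed in $G/N$: letting $\tilde C$ denote the preimage in $G$ of $C_{G/N}(A)$, every commutator $[g,a]$ with $g \in \tilde C$ and $a \in A$ lies in $N$, so by hypothesis the subgroup $[\tilde C, A]$ they generate has at most $r$ generators. A more delicate argument, exploiting the simplicity of $N$ as a $\Gamma$-module together with the inclusion $N \le [G,A]$, should then show that $N$ itself is contained in a commutator-generated subgroup with a bounded number of generators, thereby bounding the rank of $N$ in terms of $r$. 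Reconciling the subset-generation hypothesis with the $\Gamma$-module structure of $N$ in this trivial-action subcase is the step I expect to be the most delicate.
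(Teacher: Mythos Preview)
Your induction on $|G|$ does not close. You aim to show that $\operatorname{rank}([G,A])\le f(r)$ for some fixed function $f$; assuming this for $G/N$ gives $\operatorname{rank}([G,A]N/N)\le f(r)$, and even if you succeed in proving $\operatorname{rank}(N)\le g(r)$, subadditivity only yields $\operatorname{rank}([G,A])\le f(r)+g(r)$, which is not $f(r)$. Since $N\ne 1$, you cannot recover the inductive bound, and there is no evident way to strengthen the inductive hypothesis to absorb the extra $g(r)$. This is not a technicality: the number of chief factors of $[G,A]$ is not a priori $r$-bounded, so peeling them off one at a time accumulates rank without control. The paper avoids this entirely: it first proves that the generalized Fitting height of $[G,A]$ is $r$-bounded (Corollary~\ref{c-fh}, via the $p$-soluble case and Hall--Higman theory), and then runs induction on \emph{that} $r$-bounded quantity, not on $|G|$.

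Even granting the induction scheme, two of your subcases are not under control. In case~(b) with $[N,A]\ne 1$, the subgroup $[N,A]$ need not be $G$-invariant, so minimality of $N$ in $\Gamma$ does not force $[N,A]=N$; it only gives that the $G$-conjugates of $[N,A]$ span $N$, and you have no bound on how many conjugates are needed. Bounding $\operatorname{rank}([P,A])$ for an $A$-invariant Sylow $p$-subgroup $P$ likewise says nothing about $\operatorname{rank}(N)$ unless $N\le [P,A]$, which you have not established. In the trivial-action subcase you correctly flag the difficulty but offer only a heuristic: knowing that $[\tilde C,A]\le N$ is $r$-generated does not bound $\operatorname{rank}(N)$, and ``$N$ is contained in a commutator-generated subgroup with boundedly many generators'' would not bound $\operatorname{rank}(N)$ either, since an $r$-generated group can have subgroups of arbitrarily large rank. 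The paper handles the analogous difficulty (a central elementary abelian section inside $[G,A]$) not by bounding the section directly but by combining Schur's theorem, the Lubotzky--Mann bound on $\operatorname{rank}(H')$ in terms of $\operatorname{rank}(H/Z(H))$, and the already-established bounds on $[G,A]$ modulo the Fitting subgroup; see the end of the proof of Theorem~\ref{t-psol} and Proposition~\ref{p-genp}. Those arguments rely on the structural results (Hall--Higman, powerful $p$-groups, the cyclic case) built up earlier and do not reduce to a minimal-normal-subgroup dichotomy.
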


The hypothesis that any subset of $I_G(A)$ generates an $r$-generator subgroup can be regarded as a rank analogue of the restriction on the cardinality of $I_G(A)$. Theorem~A means that the `local' condition of $r$-generation by subsets of this generating set $I_G(A)$ implies the `global' $f(r)$-generation of \emph{all subgroups} of $[G,A]$. Earlier we proved in \cite[Theorem~1.4]{agks} the same kind of result without the $\pi$-solubility assumption for $G$ but for a group of coprime automorphisms~$A$ (that is, for  $(|G|,|A|)=1$). Examples in \cite[\S\,5]{agks} show that in Theorem~A the $\pi$-solubility condition on $G$ cannot be dropped for a non-coprime $\pi$-group of automorphisms~$A$.
The proof of Theorem~A depends on the classification of finite simple groups.

When $\pi$ is the set of all primes, then $\pi$-solubility means solubility, while any group is then a $\pi$-group. Hence we have the following corollary.

\begin{corollaryv}\label{c-vved}
Suppose that $A$ is a group of automorphisms of a finite soluble group $G$ such that any subset of $I_G(A)$ generates a subgroup that can be generated by $r$ elements. Then the rank of $[G,A]$ is bounded in terms of $r$.
\end{corollaryv}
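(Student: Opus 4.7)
The plan is to derive Corollary B as an immediate specialization of Theorem A to the case where $\pi$ is taken to be the set of all primes. First I would carry out a routine reduction on the automorphism group: since $G$ is finite, $\operatorname{Aut}(G)$ is finite, and passing from $A$ to its image in $\operatorname{Aut}(G)$ changes neither the commutator set $I_G(A)$ nor the subgroup $[G,A]$. So without loss of generality $A$ is finite.

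Next I would check that, with this choice of $\pi$, the three hypotheses of Theorem A are in force. (i) Every finite group has its order divisible only by primes in $\pi$ when $\pi$ is the set of all primes, so the finite group $A$ is a $\pi$-group of automorphisms, matching the restriction on $A$ in Theorem A. (ii) A soluble finite group $G$ admits a normal series all of whose factors are soluble $\pi$-groups (they are automatically $\pi$-groups, and solubility passes to factors), so $G$ is $\pi$-soluble in the sense defined in the introduction. (iii) The local generation hypothesis, namely that every subset of $I_G(A)$ generates a subgroup that can be generated by $r$ elements, is word-for-word the same assumption as in Theorem A. Applying Theorem A then yields a bound on the rank of $[G,A]$ in terms of $r$ alone, which is precisely the assertion of Corollary B.

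There is no genuine obstacle: the entire content of the corollary is contained in Theorem A, and the proof amounts to checking that the notions of \emph{$\pi$-group} and \emph{$\pi$-soluble group} degenerate to \emph{finite group} and \emph{soluble group} respectively when $\pi$ is the set of all primes. In particular, the bounding function for Corollary B may be taken to be the same function of $r$ produced by the proof of Theorem A, and the dependence on the classification of finite simple groups noted for Theorem A is inherited here as well.
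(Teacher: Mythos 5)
Your proposal is correct and matches the paper's own derivation: the paper obtains Corollary B by specializing Theorem A to the case where $\pi$ is the set of all primes, exactly as you do. The additional reduction to a finite automorphism group is harmless but unnecessary, since $\operatorname{Aut}(G)$ is automatically finite for finite $G$.
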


We also have a corollary about a  rank generation property of commutators with elements of $\pi$-subgroups in finite $\pi$-soluble groups.

\begin{corollaryv2}\label{c-vved2}
 Suppose that $H$ is a $\pi$-subgroup of a finite $\pi$-soluble group $G$ such that any subset of $I_G(H)$ generates a subgroup that can be generated by $r$ elements. Then the rank of $[G,H]$ is bounded in terms of $r$.
\end{corollaryv2}

Note  that Corollary~C is not equivalent to Theorem~A, since the group of automorphisms in this theorem does not have to be $\pi$-soluble. Earlier we proved in \cite[Theorem~1.1]{agks} this result in the case where $H$ is a Sylow $p$-subgroup of a finite $p$-soluble group $G$. Examples in \cite[\S\,5]{agks} show that the ($\pi$-)solubility condition on $G$ cannot be dropped in Corollaries~B,~C.

It is worth mentioning that a consequence of \cite[Theorem~1.3]{agks}  gives a bound in terms of $r$ for the rank of $[G,G]$ if every subset of the set of commutators in a finite group $G$ generates a subgroup that can be generated by $r$ elements, and this result is valid for any finite group, without any ($\pi$-)solubility conditions (unlike Theorem~A and Corollaries~B,~C, where such conditions are unavoidable).

In many respects, the set $I_G(A)$ is dual to the set of fixed points $C_G(A)$. In particular, for an automorphism $\alpha$ we have $|I_G(\alpha)|=|G:C_G(\alpha)|$. In numerous important results nice properties of a finite group are derived from various smallness conditions on the fixed-point subgroups of automorphisms. Many of these results stem from the seminal papers of J.~G.~Thompson \cite{tho, tho64} and G.~Higman \cite{hig} on automorphisms with few fixed points. The results in the present paper  continue the line of research in the `dual' direction, when conditions
on the sets $I_G(A)$ are used to obtain results on the structure of the subgroup $[G,A]$; see \cite{agks,tams,AGS23,as}.

Preliminary material is collected in \S\,\ref{s-prel}. In \S\,2 we prove Theorem~A in the case where the group $G$ is nilpotent, by largely repeating with minor changes the arguments from \cite{agks} and \cite{tams}. Both in this and subsequent sections we use the theory of powerful $p$-groups developed by A.~Lubotzky and A.~Mann \cite{LM}. The special case of nilpotent $G$ is an important ingredient of the proof of the main Theorem~A. Another ingredient is \cite[Theorem~1.4]{agks} on groups of coprime automorphisms. Apart from this result, we are also using certain consequences of auxiliary lemmas in \cite{agks} and \cite{tams} about coprime automorphisms, which are proved in \S\,3.  In \S\,4 Theorem~A   is proved for $\pi=\{p\}$, that is, for a $p$-group of automorphisms of a $p$-soluble group, using inter alia the celebrated Hall--Higman theorem \cite{ha-hi} on the minimal polynomial of a $p$-element in a  $p$-soluble linear group in characteristic~$p$. At the beginning of~\S\,5, Theorem~A is firstly proved in the special case of a cyclic group of automorphisms, and then it is proved in full generality using the cyclic case and the case of a $p$-group of automorphisms.

We finish this introduction with a proposition  about a `local--global' generation property of commutators with respect to cardinality, which holds  for any group of automorphisms of any, not necessarily finite, group. We also produce an example showing that, even  in the case of a cyclic group of automorphisms $A=\langle\varphi\rangle$, it is not sufficient to impose a restriction on the cardinality of $I_G(\varphi)$, rather than on  the cardinality of $I_G(\langle \varphi\rangle)$. Henceforth we write ``$(a,b\dots)$-bounded'' to abbreviate ``bounded above by some function depending only on the parameters $a,b\dots$".

\begin{proposition}\label{p-vved}
Suppose that $A$ is a group of automorphisms (or a subgroup) of a group~$G$ such that  $I_G(A)$ is finite of cardinality $m$. Then $[G,A]$ is finite of order bounded in terms of $m$.
\end{proposition}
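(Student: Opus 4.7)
I shall work in the semidirect product $\Gamma = G \rtimes A$, in which each element of $I_G(A)$ is the ordinary group commutator $[g,a]=g^{-1}a^{-1}ga$ for $g\in G$ and $a\in A$, and $K:=[G,A]\lhd\Gamma$ is the normal subgroup generated by $I_G(A)$ whose order we want to bound.

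From the identities $g^a = g\,[g,a]$ and $a^g = a\,[g,a]^{-1}$ we read off immediately that
\[
[G:C_G(a)]\leq m\ \text{for every }a\in A,\qquad [A:C_A(g)]\leq m\ \text{for every }g\in G,
\]
so every $A$-orbit on $G$ and every $G$-orbit on $A$ (by conjugation in $\Gamma$) has cardinality at most $m$. Moreover, $A$ permutes the $m$-element set $I_G(A)$ by conjugation, since $[g,a]^{a'}=[g^{a'},a^{a'}]\in I_G(A)$; the kernel $B$ of the resulting homomorphism $A\to \operatorname{Sym}(I_G(A))$ therefore satisfies $[A:B]\leq m!$ and centralizes $K$ elementwise.

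The bound on $|K|$ is now obtained by Schur's theorem, in the same spirit as the classical ``finite commutator set implies bounded derived subgroup'' argument recalled at the start of the introduction. Choose coset representatives $a_1,\dots,a_r$ of $A/B$ in $A$ with $r\leq m!$, and consider the finitely generated subgroup $H:=\langle I_G(A),\,a_1,\dots,a_r\rangle\leq\Gamma$. Each generator has conjugacy class of bounded size in $H$: the $m$ elements of $I_G(A)$ form an $A$-invariant set of size $m$, while each $a_j$ has at most $m$ distinct $G$-conjugates in $\Gamma$ and the residual $A$-conjugation on $a_j$ is controlled modulo $B$ by the finite quotient $A/B$ of order $\leq m!$ (using $B\leq C_A(K)\leq C_H(K)$). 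Consequently the intersection of the centralizers in $H$ of these $m+r$ generators has index $m$-bounded in $H$, so $[H:Z(H)]$ is $m$-bounded. Schur's theorem then yields $|[H,H]|\leq f_1(m)$, and since the commutators $[c,a_j]=c^{-1}c^{a_j}\in[H,H]$, as $c$ runs over $I_G(A)$ and $j=1,\dots,r$, together with the $A/B$-fixed points of $I_G(A)$ generate $K$, one concludes $|K|\leq f(m)$.

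The principal technical obstacle is to verify that $K$ is actually contained in $[H,H]$ up to a controlled central correction: each $c_i=[g_i,a_i]\in I_G(A)$ is a genuine commutator in $\Gamma$, but its first entry $g_i\in G$ need not lie in $H$, so $c_i$ is not manifestly a commutator of elements of $H$. The resolution is that every non-fixed $c\in I_G(A)$ belongs to a nontrivial $A/B$-orbit of size $\leq m$ inside $I_G(A)$, and each element of that orbit is produced as $[c',a_j]\in[H,H]$ for suitable $c'\in I_G(A)$ and coset representative $a_j$; the $A/B$-fixed points of $I_G(A)$ are central in $H$ modulo the already-bounded part, so the full group $K$ sits inside $[H,H]\cdot Z(H)$ and Schur's bound transfers to $|K|$.
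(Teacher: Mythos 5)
Your proposal has two genuine gaps, and the second one is exactly the difficulty that the paper's proof is built to overcome. First, the claim that every generator of $H=\langle I_G(A),a_1,\dots,a_r\rangle$ has $m$-bounded conjugacy class in $H$ (so that $[H:Z(H)]$ is $m$-bounded) is not justified and is false in general. The set $I_G(A)$ is invariant under conjugation by $A$, but \emph{not} under conjugation by its own elements, which lie in $G$: for $c,c'\in I_G(A)$ one has $c^{c'}=c[c,c']$ with $[c,c']$ an uncontrolled commutator in $G$, so the $H$-class of $c$ (already its class under $K=\langle I_G(A)\rangle\leqslant H$) is not bounded a priori --- asserting it is bounded essentially presupposes the conclusion. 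Likewise, $B\leqslant C_A(K)$ does not control the $H$-class of the representatives $a_j$: conjugation of $a_j$ by the other $A$-elements of $H$ (the subgroup $\langle a_1,\dots,a_r\rangle$ may be a huge subgroup of $A$) has nothing to do with $B$ centralizing $K$, and for a bad choice of representatives $[H:Z(H)]$ is unbounded. So Schur's theorem cannot be applied to $H$ in the way you propose.

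Second, even granting a bound on $|[H,H]|$, the final step does not bound $|K|$. The inclusion $K\leqslant [H,H]\cdot Z(H)$ gives no bound, because central elements may have unbounded order; in particular the $A$-fixed elements of $I_G(A)$, though at most $m$ in number, could a priori generate a large (central) subgroup. Also, the orbit elements are $c'^{a_j}=c'[c',a_j]$, which lie in $c'[H,H]$, not in $[H,H]$. This is precisely where the paper's argument does extra work: it first reduces to the case where $|A|$ itself is $m$-bounded (using a normal subgroup of bounded index centralizing an $A$-invariant subgroup $H$ containing the elements $g_i$, so that $[G,A]=[H,A]$), then for each $\varphi\in A$ gets a central subgroup of bounded index in $[G,\varphi]$ and applies Schur to bound $[G,\varphi]'$, and finally uses the identity $[g,\varphi][g,\varphi]^{\varphi}\cdots[g,\varphi]^{\varphi^{n-1}}=1$ to show that the $\varphi$-fixed part of the $m$-generated abelian quotient $[G,\varphi]/[G,\varphi]'$ has exponent dividing $n=|\varphi|$, which (with $n\leqslant |A|$ bounded) bounds the abelianization and hence $[G,\varphi]$ and $[G,A]$. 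Your proposal contains no substitute for this exponent argument, so the order of $K$ is not actually bounded by what you prove.
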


\begin{proof}
  Let $I_G(A)=\{[g_1,a_1],\dots ,[g_m,a_m]\mid g_i\in G,\;a_i\in A\}$. Let $H=\langle\langle g_1,\dots ,g_m\rangle^A \rangle$, which is the minimal $A$-invariant subgroup containing all $g_i$. Then $[G,A]=[H,A]$. For any $g\in G$, we have $|A:C_A(g)|\leqslant m$. Hence the intersection $\bigcap_{i=1}^m C_A(g_i)$ has finite $m$-bounded index in $A$. Let $A_0$ be the maximal normal subgroup of $A$ contained in $\bigcap_{i=1}^m C_A(g_i)$; then  $A_0$ has $m$-bounded index in $A$. Clearly, $A_0$ centralizes~$H$, so that the quotient $A/A_0$ naturally acts on $H$. Hence,  $[G,A]=[H,A]=[H,A/A_0]$. Thus, replacing $G$ with $H$, we can assume that $A$ is finite of $m$-bounded order.

Let $\varphi$ be an arbitrary element of $A$, and let $|\varphi|=n$. We know that $|G:C_G(\varphi)|\leqslant m$; let $N$ be a normal subgroup of $G$ of $m$-bounded index  contained in $C_G(\varphi)$. Then $[G,\varphi ]$ centralizes~$N$ (see Lemma~\ref{l-ker} below). Hence $[G,\varphi]$ has a central subgroup of finite $m$-bounded index. Then the derived subgroup $[G,\varphi]'$ is finite of $m$-bounded order by Schur's theorem \cite[Theorem~4.12]{rob}. An easy calculation shows that
 $[g,\varphi][g,\varphi]^{\varphi}\cdots [g,\varphi]^{\varphi^{n-1}}=1$. By the `linearity' of $\varphi$ on the abelian quotient $[G,\varphi]/[G,\varphi]'$ we obtain that $xx^{\varphi}\cdots x^{\varphi^{n-1}}=1$ for all $x\in [G,\varphi]/[G,\varphi]'$. Then the centralizer of $\varphi$ in this quotient has exponent $n$. Since $|I_G(\varphi)|\leqslant m$, this quotient is generated by $m$ elements. Hence this centralizer is finite of $(m,n)$-bounded order, while its index is at most $m$ by hypothesis. Thus,  $[G,\varphi]/[G,\varphi]'$ is finite of $(m,n)$-bounded order, and hence $[G,\varphi]$ is also finite of $(m,n)$-bounded order. The commutator subgroup $[G,A]$ is equal to the product of the normal subgroups $[G,\varphi]$ with $\varphi$ running over some generating set of $A$. Therefore  $[G,A]$ is finite of order bounded in terms of $m$ and $|A|$, and therefore in terms of $m$, since $|A|$ is $m$-bounded by assumption.
\end{proof}

The following example shows that in Proposition~\ref{p-vved},  in the case of a cyclic group of automorphisms $A=\langle\varphi\rangle$, it is not sufficient to impose a restriction on the cardinality of $I_G(\varphi)$, rather than on  the cardinality of $I_G(\langle \varphi\rangle)$.

\begin{example}\label{ex-v}
   Let $a$ be a splitting automorphism of prime order $p$ of a homocyclic group $B=\langle b_1\rangle\times \dots \times \langle b_{p-1}\rangle$ of exponent $p^n$ defined by $b_1^a=b_2$, $b_2^a=b_3$, \dots $b_{p-1}^a=(b_1\cdots b_{p-1})^{-1}$.
   Let $G=B\langle a\rangle$ and let $\varphi$ be an automorphism of $G$ such that $a^\varphi=ab_1$ and $C_G(\varphi)=B$. Then $|I_G(\varphi)|=|G:C_G(\varphi)|=|G:B|=p$, but $[a,\varphi]=b_1$, $[a^2,\varphi]=b_2b_1$, \dots , $[a^{p-1},\varphi]=b_{p-1}\cdots b_2b_1$. We see that $[G,\varphi ]=B$ has order $p^{n(p-1)}$, so it is not $|I_G(\varphi)|$-bounded when $n\to \infty$.
\end{example}

\section*{Acknowledgement}

The authors are grateful to the anonymous referees for careful reading and several helpful suggestions.

\section{Preliminaries}\label{s-prel}

Henceforth all groups considered in this paper are finite. Let $G$ be a group. We use the standard notation $\langle S\rangle$ for the (sub)group generated by a subset $S$, and $a^H=\{a^h=h^{-1}ah\mid h\in H\}$ for the set of conjugates of an element or subset $a$ by elements of a subset $H\subseteq G$; then $\langle S^G\rangle$ is the normal closure of $S$ in~$G$.

Recall that the (Pr\"ufer) rank of a finite group $G$ is the least positive integer $r$ such that every subgroup of $G$ can be generated by $r$ elements. The rank of an abelian, or more generally, nilpotent group is the maximum of the ranks of its Sylow subgroups. We also recall another well-known fact.

 \begin{lemma}\label{l-nil-rank}
 If $N$ is a nilpotent group of class $c$ generated by $k$ elements, then the rank of $N$ is bounded in terms of $k$ and $c$.
 \end{lemma}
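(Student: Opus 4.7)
The plan is to exploit the lower central series $N=\gamma_1(N)\geq \gamma_2(N)\geq \dots\geq \gamma_{c+1}(N)=1$. The strategy splits into two steps: first bound the rank of each abelian factor $\gamma_i(N)/\gamma_{i+1}(N)$ in terms of $k$ and $i$, then assemble these bounds along the series to control an arbitrary subgroup of $N$.

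For the first step I would fix generators $x_1,\dots,x_k$ of $N$. Iterated application of $[\gamma_i(N),N]=\gamma_{i+1}(N)$ expresses every element of $\gamma_i(N)$ modulo $\gamma_{i+1}(N)$ as a product of left-normed commutators $[x_{j_1},x_{j_2},\dots,x_{j_i}]$ in the given generators; since the commutator is multilinear modulo the next term of the lower central series, the abelian quotient $\gamma_i(N)/\gamma_{i+1}(N)$ is already generated by the at most $k^i$ such basic commutators. A finitely generated abelian group on $n$ generators has every subgroup generated by at most $n$ elements (Smith normal form / elementary divisors), so the rank of $\gamma_i(N)/\gamma_{i+1}(N)$ is at most $k^i$.

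For the second step, given any subgroup $H\leq N$, intersecting with the series yields a descending chain whose successive quotients $(H\cap\gamma_i(N))/(H\cap\gamma_{i+1}(N))$ embed into $\gamma_i(N)/\gamma_{i+1}(N)$. By the previous step each factor is therefore generated by at most $k^i$ elements, and lifting these generators through the filtration shows that $H$ itself is generated by $\sum_{i=1}^{c}k^i$ elements. This is a bound depending only on $k$ and $c$, proving the lemma.

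I do not anticipate a serious obstacle: the only subtle ingredient is the multilinearity of the commutator modulo $\gamma_{i+1}(N)$, which is a standard fact in the theory of nilpotent groups. Should a sharper constant be desired one could replace the crude count $k^i$ by the actual number of basic commutators of weight $i$ in $k$ letters, but this refinement is unnecessary for the qualitative bound stated.
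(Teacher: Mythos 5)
Your proof is correct and follows essentially the same route as the paper: both arguments rest on the fact that $\gamma_i(N)/\gamma_{i+1}(N)$ is generated by at most $k^i$ basic commutators and then filter an arbitrary subgroup through the lower central series. The only cosmetic difference is that the paper refines the series to one with cyclic factors of $(c,k)$-bounded length, whereas you invoke directly that subgroups of an $n$-generated abelian group are $n$-generated; the resulting bound $\sum_{i=1}^{c}k^i$ is the same.
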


\begin{proof}
  Since the $i$-th factor of the lower central series of $N$ is generated by at most $k^i$ elements \cite[Corollary~10.2.3.]{mhall}, the group $N$ has a central series of $(c,k)$-bounded length with cyclic factors. Hence  any subgroup of $N$ has such a series and therefore can be generated by $(c,k)$-bounded number of elements.
\end{proof}

We shall be using the following well-known result of A.~Lubotzky and A.~Mann \cite[Propositions~2.6 and 4.2.6]{LM}.

\begin{lemma}\label{l-24}
Let $p$ be a prime, and $G$ a group of exponent $p^k$ and of rank $m$. Then there is a number $s(k,m)$ depending only on $k$ and $m$ such that $|G|\leqslant p^{s(k,m)}$.
\end{lemma}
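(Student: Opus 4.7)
The plan is to reduce to the case of a powerful $p$-group by invoking the Lubotzky--Mann theory already signalled in the paper. Concretely, I would first quote the structural result that a finite $p$-group of rank $m$ contains a powerful characteristic subgroup $P$ whose index is bounded by $p^{g(m)}$ for some function $g$ depending only on $m$. This step supplies the only nontrivial input and reduces the lemma to proving an analogous bound for $P$, which inherits the rank bound (rank is subgroup-monotone by the very definition recalled in \S\ref{s-prel}) and the exponent bound $\exp(P)\mid p^k$.

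For the powerful subgroup $P$, I would then use two further standard properties from \cite{LM}. First, in a powerful $p$-group the rank equals the minimal number of generators, so $d(P)\leqslant m$. Second, for a powerful $p$-group the subgroups $P^{p^i}$ are again powerful and satisfy $P^{p^{i+1}}=(P^{p^i})^p$; in particular the $p$-th power map induces a surjection $P/P^p\twoheadrightarrow P^{p^i}/P^{p^{i+1}}$ for every $i$. Since $P/P^p$ is elementary abelian of order at most $p^{d(P)}\leqslant p^m$, and the chain $P\geqslant P^p\geqslant P^{p^2}\geqslant\dots\geqslant P^{p^k}=1$ has at most $k$ proper steps (as $\exp P$ divides $p^k$), multiplying the factor orders yields $|P|\leqslant p^{mk}$.

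Combining the two estimates, $|G|=|G:P|\cdot|P|\leqslant p^{g(m)+mk}$, so one may take $s(k,m):=g(m)+mk$.

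The main obstacle is the first step: producing a powerful characteristic subgroup of rank-bounded index is the central technical ingredient of the Lubotzky--Mann theory, and I would use it as a black box. Once this is in hand, the rest is a short calculation exploiting the well-behaved power-map structure of powerful $p$-groups.
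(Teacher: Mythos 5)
Your proof is correct. The paper does not prove this lemma at all---it simply cites Lubotzky--Mann \cite[Propositions~2.6 and 4.2.6]{LM}---and your argument (a powerful characteristic subgroup $P$ of $m$-bounded index, $d(P)=\mathrm{rk}(P)\leqslant m$, and the surjections $P^{p^i}/P^{p^{i+1}}\twoheadrightarrow P^{p^{i+1}}/P^{p^{i+2}}$ giving $|P|\leqslant p^{mk}$) is exactly the standard derivation in that source, so there is nothing to add.
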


While the next lemma follows from the positive solution of the Restricted Burnside Problem, it can also be derived from Lemma~\ref{l-24} applied to Sylow subgroups.

\begin{lemma}\label{l-burn}
If $G$ is a group of rank $r$ and exponent $n$, then the order of $G$ is $(r,n)$-bounded.
\end{lemma}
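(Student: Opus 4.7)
The plan is to reduce the bound on $|G|$ to the prime power case handled by Lemma~\ref{l-24}, by decomposing $G$ via its Sylow subgroups. First I note that every prime $p$ dividing $|G|$ must divide the exponent $n$, since $p$ divides the order of some element of $G$. Hence the set $\pi(G)$ of prime divisors of $|G|$ is contained in $\pi(n)$, and in particular $|\pi(G)|\leqslant |\pi(n)|\leqslant \log_2 n$, which is $n$-bounded.

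Next, for each prime $p\in \pi(G)$, write $n=p^{k_p}m$ with $\gcd(p,m)=1$, and let $P$ be a Sylow $p$-subgroup of $G$. Since $P$ is a subgroup of $G$, its rank is at most $r$, and its exponent divides $p^{k_p}$ and hence is at most $n$. Applying Lemma~\ref{l-24} to $P$ gives
\[
|P|\leqslant p^{s(k_p,r)},
\]
which is a quantity bounded in terms of $r$ and $n$ alone.

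Finally, since $|G|$ equals the product of the orders of its Sylow subgroups for the primes in $\pi(G)$, we obtain
\[
|G|=\prod_{p\in \pi(G)}|P_p|\leqslant \prod_{p\in \pi(n)} p^{s(k_p,r)},
\]
and the right-hand side is an $(r,n)$-bounded quantity, being a product of an $n$-bounded number of $(r,n)$-bounded factors. There is no real obstacle here; the only thing to be careful about is observing at the outset that $\pi(G)\subseteq \pi(n)$, so that both the number of factors and each factor in the above product are controlled by $r$ and $n$.
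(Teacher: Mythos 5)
Your proof is correct and follows exactly the route the paper indicates: the paper gives no detailed argument but states that the lemma "can be derived from Lemma~\ref{l-24} applied to Sylow subgroups," which is precisely your decomposition $|G|=\prod_p|P_p|$ with each factor controlled by the Lubotzky--Mann bound and the number of factors controlled by $\pi(G)\subseteq\pi(n)$.
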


The Fitting subgroup $F(G)$ is the largest nilpotent normal subgroup of a group $G$. The Fitting series of $G$ is defined as $F_1(G) = F (G)$, and by induction $F_{i+1}(G)$ is the inverse image of $F (G/F_i(G))$. If $G$ is soluble, then the least number $h$ such that $F_h(G) = G$ is called the Fitting height of $G$.
Another consequence of having a bound for the rank is also well known;  see for example, \cite[Lemma~2.4]{glasgow}.

\begin{lemma}\label{l-rfit}
The Fitting height of a soluble group is bounded in terms of its rank.
\end{lemma}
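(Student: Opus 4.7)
The plan is to reduce the problem, in two steps, to bounding the Fitting height of a soluble subgroup of $\mathrm{GL}_n(\mathbb{F}_p)$ in terms of $n$, and then to invoke a classical result on soluble linear groups. Set $F=F(G)$ and $V=F/\Phi(F)$. Since $F$ is nilpotent, $V$ decomposes as $\bigoplus_p V_p$, where each $V_p=F_p/\Phi(F_p)$ is elementary abelian of rank at most $r$. Thus $\mathrm{Aut}(V)$ is a finite direct product of groups $\mathrm{GL}_{n_p}(\mathbb{F}_p)$ with $n_p\le r$. Let $K$ be the kernel of the action of $G$ on $V$.

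The first key step would be to show that $K\le F_2(G)$. I would combine two classical facts: (i) for soluble $G$, $C_G(F(G))\le F(G)$; and (ii) for any finite $p$-group $P$, the kernel of $\mathrm{Aut}(P)\to \mathrm{Aut}(P/\Phi(P))$ is a $p$-group, a standard consequence of Burnside's basis theorem. Applying (ii) to each Sylow $p$-subgroup $F_p$ of $F$ shows that $K/C_K(F_p)$ is a $p$-group for each $p\mid |F|$; intersecting centralizers and using (i) yields $C_K(F)\le F$, so $K/F$ embeds into a direct product of $p_i$-groups for distinct primes $p_i$. Such a product is nilpotent, whence $K/F$ is nilpotent and $K\le F_2(G)$.

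Consequently $G/F_2(G)$ embeds, as a quotient of $G/K$, into $\prod_p \mathrm{GL}_{n_p}(\mathbb{F}_p)$. Because the Fitting subgroup is compatible with finite direct products, the Fitting height of any subgroup (or quotient) of such a product is at most the maximum of the Fitting heights of the projections to the individual factors. Each projection is a soluble subgroup of $\mathrm{GL}_{n_p}(\mathbb{F}_p)$ with $n_p\le r$, and by the classical bound of Dixon (giving derived length $O(\log n)$, hence in particular bounded Fitting height) its Fitting height is bounded by a function of $r$ alone. This yields $h(G/F_2(G))\le f(r)$ for some function $f$, and therefore $h(G)\le f(r)+2$.

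The main obstacle is essentially just the invocation of the bound for soluble linear groups of dimension $n$ over $\mathbb{F}_p$ (together with the verification that the bound is uniform in $p$); everything else is a routine Fitting-series reduction built around the two classical facts above.
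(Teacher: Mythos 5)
Your proof is correct. The paper does not actually prove this lemma --- it is quoted as well known, with a citation to \cite[Lemma~2.4]{glasgow} --- and your argument (act on $F(G)/\Phi(F(G))$, use Burnside's theorem on the kernel of $\operatorname{Aut}(P)\to\operatorname{Aut}(P/\Phi(P))$ together with $C_G(F(G))\le F(G)$ to trap the kernel inside $F_2(G)$, then apply the Zassenhaus--Dixon bound on the derived length of soluble linear groups of degree $n\le r$, which is indeed uniform in the field) is precisely the standard proof behind that citation. The only blemishes are cosmetic: $K/F$ and $G/F_2(G)$ are \emph{quotients} of groups that embed into the relevant direct products rather than embedding themselves, but since Fitting height and nilpotency pass to quotients this costs nothing.
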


By a simple group we always mean a finite non-abelian simple group. By a semisimple group we mean a direct product of simple groups.
Recall that the generalized Fitting subgroup $F^*(G)$ of a finite group $G$ is the   product of the Fitting subgroup $F(G)$ and the characteristic subgroup $E(G)$, which is a central product of all subnormal quasisimple subgroups of $G$, that is, $E(G)=\prod Q_i$ over all $Q_i$ such that $Q_i$ is subnormal in $G$,  $Z(Q_i)\leqslant [Q_i,Q_i]$, and $S_i=Q_i/Z(Q_i)$ is a non-abelian simple group.  Then $[F(G),E(G)]=1$ and $E(G)/Z(E(G))\cong F^*(G)/F(G)$ is a semisimple group that is the direct product of the $S_i$. Acting by conjugation, the group $G$ permutes the factors $Q_i$ and $C_G(F^*(G))\leqslant F(G)$.

Let $S(G)$ denote the soluble radical of a group $G$, that is, the largest normal soluble subgroup. The following fact  (see, for example, \cite[Lemma~2.1]{khu-shu131}) is a well-known consequence of  Schreier's conjecture on solubility of outer automorphism groups of non-abelian finite simple groups confirmed by the classification.

\begin{lemma}\label{l-21}
Let  $L/S(G)=F^*(G/S(G))$ be the generalized Fitting subgroup of the quotient of a group $G$ by the soluble radical, and let $K$ be the kernel of the permutational action of $G$ on the set of subnormal simple factors of $L/S(G)$. Then $K/L$ is soluble.
\end{lemma}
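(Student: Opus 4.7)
The plan is to reduce immediately to the case $S(G)=1$ by passing to $G/S(G)$: solubility of $K/L$ is insensitive to this reduction since $L$ already contains $S(G)$. In this reduced setting $F(G)=1$, so $F^*(G)=E(G)$, and moreover $Z(E(G))\leqslant S(G)=1$, which forces $L=F^*(G)$ to be a genuine direct product $S_1\times\cdots\times S_t$ of non-abelian simple groups (the subnormal simple factors on which $G$ permutes).

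Next I would exploit the defining property of the generalized Fitting subgroup, namely $C_G(F^*(G))\leqslant F(G)$. In the reduced setting this centraliser is trivial, so conjugation yields an embedding $G\hookrightarrow \operatorname{Aut}(L)$. By definition $K$ acts trivially on the set of factors $\{S_1,\dots,S_t\}$, so each $S_i$ is $K$-invariant and the embedding restricts to
\[
K \hookrightarrow \operatorname{Aut}(S_1)\times\cdots\times\operatorname{Aut}(S_t).
\]
Under this embedding $L=S_1\times\cdots\times S_t$ maps onto $\operatorname{Inn}(S_1)\times\cdots\times\operatorname{Inn}(S_t)$, since each $S_i$ has trivial centre and acts on itself by inner automorphisms. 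Consequently
\[
K/L \hookrightarrow \operatorname{Out}(S_1)\times\cdots\times\operatorname{Out}(S_t).
\]

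The concluding step is to invoke the Schreier conjecture, confirmed by the classification of finite simple groups: every $\operatorname{Out}(S_i)$ is soluble. A direct product of soluble groups is soluble, so $K/L$ is soluble, as required. The main conceptual content sits entirely in this last appeal; everything else is book-keeping about $F^*$ and its centraliser. There is no real obstacle beyond organising the reduction correctly so that the isomorphism $L/S(G)\cong S_1\times\cdots\times S_t$ is literally a direct product (which is ensured by $Z(E(G))\leqslant S(G)$), and so that the kernel statement in the formulation matches the image $K/L$ inside $\prod\operatorname{Out}(S_i)$.
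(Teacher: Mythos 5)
Your proof is correct and follows exactly the route the paper intends: the paper gives no proof of this lemma, citing it as a known consequence of the Schreier conjecture, and your reduction to $S(G)=1$, the resulting identification $L=S_1\times\cdots\times S_t$ with $C_G(L)=1$, and the embedding $K/L\hookrightarrow\prod\operatorname{Out}(S_i)$ constitute precisely the standard argument behind that citation.
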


The generalized Fitting series of a finite group $G$ is defined starting with the generalized Fitting subgroup $F^*_1(G)=F^*(G)$, and then by induction $F^*_{i+1}(G)$ is the inverse image of $F^*(G/F^*_{i}(G))$. The least number $h$ such that $F^*_h(G)=G$ is the generalized Fitting height $h^*(G) $ of $G$.
An analogue of Lemma~\ref{l-rfit} holds for the generalized Fitting height.

\begin{lemma}\label{l-212}
The generalized Fitting height of a finite group $G$ is bounded in terms of its rank.
\end{lemma}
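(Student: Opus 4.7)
The plan is to reduce to the soluble situation handled by Lemma~\ref{l-rfit}, by interposing two layers of non-soluble behaviour whose contribution to the generalized Fitting height can be controlled directly. Let $r$ be the rank of $G$, write $S=S(G)$ for the soluble radical, and let $L/S=F^*(G/S)$. Since $S(G/S)=1$, the nilpotent part $F(G/S)$ vanishes, so $L/S=E(G/S)$ is a direct product $S_1\times\dots\times S_k$ of non-abelian simple groups. Following Lemma~\ref{l-21}, let $K$ be the kernel of the conjugation action of $G$ on $\{S_1,\dots,S_k\}$; then $K/L$ is soluble.

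The four factors in the chain $1\leqslant S\leqslant L\leqslant K\leqslant G$ can each be controlled in terms of $r$. Both $S$ and $K/L$ are soluble of rank at most $r$, so by Lemma~\ref{l-rfit} their Fitting heights are bounded by some $c=c(r)$. The key elementary observation for the semisimple layer is that $L/S$ contains an elementary abelian $2$-subgroup of rank $k$ (one involution from each $S_i$), whence $k\leqslant r$; consequently $G/K$ embeds in $\mathrm{Sym}(k)$, so $|G/K|\leqslant r!$.

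To promote these bounds into a bound on $h^*(G)$, I will first check by induction on $i$ that if $N$ is a soluble normal subgroup of $G$ with Fitting height $h$, then $N\leqslant F^*_h(G)$: the image of $F_{i+1}(N)$ in $G/F^*_i(G)$ is nilpotent and normal, hence lies in $F(G/F^*_i(G))\leqslant F^*_{i+1}(G)/F^*_i(G)$. Applied to $S$, this yields $S\leqslant F^*_c(G)$. One further step of the generalized Fitting series absorbs $L$, since the image of $L$ in $G/F^*_c(G)$ is a quotient of the semisimple $L/S$, hence is itself a semisimple normal subgroup, and so lies in $E(G/F^*_c(G))\leqslant F^*(G/F^*_c(G))$. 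Applying the soluble fact again to the image of $K$ in $G/F^*_{c+1}(G)$ then gives $K\leqslant F^*_{2c+1}(G)$. Finally $G/F^*_{2c+1}(G)$ is a quotient of $G/K$, hence of order at most $r!$, so its own generalized Fitting height is bounded by its composition length, which is $r$-bounded. Combining these contributions bounds $h^*(G)$ in terms of $r$.

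The only nontrivial external input is Lemma~\ref{l-21}, which rests on Schreier's conjecture and hence on the classification of finite simple groups; everything else is bookkeeping along the generalized Fitting series. The main subtlety to watch is that the successive images remain semisimple (respectively soluble of the relevant Fitting height) after passing to the quotient by $F^*_i(G)$, but in each case this is immediate because these properties pass to quotients.
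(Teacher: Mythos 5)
Your proof is correct and follows essentially the same route as the paper: the chain $1\leqslant S(G)\leqslant L\leqslant K\leqslant G$ from Lemma~\ref{l-21}, Lemma~\ref{l-rfit} for the two soluble layers, and the involution count bounding the number of simple factors and hence $|G/K|$. The only difference is that you spell out the bookkeeping (absorbing each layer into the generalized Fitting series) that the paper compresses into ``Hence, the result follows,'' and your verification of those steps is accurate.
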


\begin{proof}
Let $r$ be the rank of $G$. In the notation of Lemma~\ref{l-21}, the soluble radical $S(G)$  has $r$-bounded Fitting height by Lemma~\ref{l-rfit}, and so does the soluble quotient $K/L$. The number of subnormal simple factors of $L/S(G)$ is at most $r$, since each of them contains an involution. Therefore the quotient $G/K$ has $r$-bounded order. Hence, the result follows.
\end{proof}

If a group $A$ acts by automorphisms on a group $B$ we use the usual notation for the  commutators $[b,a]=b^{-1}b^a$ and the commutator subgroup $[B,A]=\langle [b,a]\mid b\in B,\;a\in A\rangle$, as well as for the centralizers $C_B(A)=\{b\in B\mid b^a=b \text{ for all }a\in A\}$ and $C_A(B)=\{a\in A\mid b^a=b \text{ for all }b\in B\}$. The automorphism induced by an automorphism $\varphi  $ of $B$ on the quotient by a normal $\varphi $-invariant subgroup is denoted by the same letter $\varphi $. Note that $[B,\varphi]=[B,\langle\varphi\rangle]$, as a special case of the following elementary lemma, which we may use without special references.

\begin{lemma}\label{l-gen}
Let $A$ be a group acting by automorphisms on a group $B$. If $A=\langle S\rangle$, then $[B,A]=\prod_{s\in S}[B,s]$.
\end{lemma}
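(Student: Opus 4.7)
The plan is to prove the nontrivial inclusion $[B,A]\subseteq N:=\prod_{s\in S}[B,s]$, since the reverse inclusion is immediate from $[B,s]\subseteq[B,A]$ for every $s\in S$. I would first observe that each $[B,s]$ is normal in $B$: inside the semidirect product $B\rtimes\langle s\rangle$ the subgroup $[B,s]$ coincides with $[B,\langle s\rangle]$, which as a commutator of two subgroups is normalized by both factors, in particular by $B$. Therefore $N$ is a well-defined subgroup of $B$, normal in $B$ as a product of normal subgroups.

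The crux of the argument is to show that $N$ is actually $A$-invariant. It suffices to verify that $[B,t]^s\subseteq N$ for all $s,t\in S\cup S^{-1}$. For this I would use the elementary identity $x^s=x\cdot[x,s]$: if $x\in[B,t]\subseteq B$, then $[x,s]\in[B,s]$, so
\[
x^s \;=\; x\cdot[x,s]\;\in\;[B,t]\cdot[B,s]\;\subseteq\;N.
\]
Hence $[B,t]^s\subseteq N$ for each $t\in S$, which gives $N^s\subseteq N$; equality follows from the finiteness of $B$. Iterating this over a word in $S\cup S^{-1}$ expressing an arbitrary element of $A$ then yields $N^a\subseteq N$ for every $a\in A$.

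With the $A$-invariance of $N$ established, the group $A$ acts on the quotient $B/N$, and each generator $s\in S$ acts trivially on $B/N$ because $b^s=b\cdot[b,s]\equiv b\pmod N$. Since $S$ generates $A$, the entire group $A$ acts trivially on $B/N$, which is exactly the statement $[B,A]\subseteq N$, and the proof is complete. The only genuinely non-routine step is the $A$-invariance of $N$; once one writes down the identity $x^s=x[x,s]$, the argument closes cleanly without any circular dependence on the equality being proved.
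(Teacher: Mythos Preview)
Your proof is correct and follows the same outline as the paper's: each $[B,s]$ is normal in $B$, the product $N$ is $A$-invariant, and then $A$ acts trivially on $B/N$ since every generator does. The paper's version is terser and simply asserts the $A$-invariance of $N$ (``since it is invariant under every generator in $S$''), whereas you supply the explicit verification via $x^s=x[x,s]$; your appeal to finiteness of $B$ is legitimate under the paper's standing convention that all groups are finite.
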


\begin{proof}
Indeed, for each $s\in S$, the subgroup $[B,s]$ is normal in $B$, and their product is $A$-invariant, since it is invariant under every generator in $S$. The group $A$ acts trivially on the quotient by this product, since so does every generator in $S$. Hence $[B,A]\leqslant\prod_{s\in S}[B,s]$, while the reverse inclusion is obvious.
\end{proof}

We reproduce two more elementary lemmas.

\begin{lemma}\label{l-ker}
Let $A$ be a group acting by automorphisms on a group $G$.
 If $N$ is a normal subgroup of $G$ such that $N\leqslant C_G(A)$, then $[G,A]$ centralizes $N$.
\end{lemma}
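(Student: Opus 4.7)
The plan is to reduce the statement to the generators of $[G,A]$. Since $[G,A]$ is generated by the commutators $[g,a]$ with $g\in G$ and $a\in A$, and the centralizer $C_G(N)$ is a subgroup of $G$, it suffices to check that each such $[g,a]$ commutes with every $n\in N$. I would therefore fix $n\in N$, $g\in G$, $a\in A$ and aim to show $n^{[g,a]}=n$ by a direct computation.

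The computation would use exactly the two hypotheses, in turn. First, normality of $N$ in $G$ guarantees that the conjugate $m:=n^{g^{-1}}$ still lies in $N$. Second, the assumption $N\leqslant C_G(A)$ says that $a$ (and hence $a^{-1}$) fixes every element of $N$ pointwise, so inside any product we may freely slide $a^{\pm 1}$ past any element of $N$. Writing $[g,a]=g^{-1}g^a$ with $g^a=a^{-1}ga$, the expression $n^{[g,a]}=(g^a)^{-1}m\,g^a$ contains two occurrences of $a^{\pm1}$ flanking the element $m\in N$; absorbing each of them via the second fact collapses the expression to $a^{-1}(m^g)a=a^{-1}na$, and a final application of the second fact to $n\in N$ yields $n$.

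I do not foresee any genuine obstacle: the entire content is a short piece of commutator arithmetic. The only point that deserves a modicum of care is to apply the ``sliding'' identity $am=ma$ only to elements $m$ already known to lie in $N$ (namely $m$ and $n$), and never to a generic element of $G$, since doing the latter would tacitly assume the very conclusion we are trying to prove.
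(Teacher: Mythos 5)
Your argument is correct: reducing to generators is legitimate because the centralizer of $N$ in the semidirect product $GA$ is a subgroup, and the computation
$n^{[g,a]}=(g^a)^{-1}(n^{g^{-1}})\,g^a=a^{-1}\bigl((n^{g^{-1}})^{g}\bigr)a=a^{-1}na=n$
goes through exactly as you describe, using normality of $N$ once and $N\leqslant C_G(A)$ twice. The paper takes a slightly different, computation-free route: since $N$ is normal in $G$ and fixed pointwise by $A$, it is normal in $GA$, so the kernel $K=C_{GA}(N)$ of the conjugation action of $GA$ on $N$ is a normal subgroup of $GA$ containing $A$; the standard fact that $A\leqslant K\trianglelefteq GA$ forces $[G,A]\leqslant [GA,K]\leqslant K$ then finishes the proof in one line. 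Your version buys an explicit, self-contained verification of the identity; the paper's buys brevity and makes transparent why the statement is really about $A$ sitting inside a normal subgroup of $GA$. Both are sound, and your closing caution about applying the sliding identity only to elements already known to lie in $N$ is exactly the right point to be careful about.
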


\begin{proof}
The hypothesis means that $A$ is contained in the kernel $K$ of the action of the semidirect product $GA$ by conjugation on $N$. Since $K$ is a normal subgroup of $GA$, we also have $[G,A]\leqslant K$.
\end{proof}

Another lemma is well-known.

\begin{lemma}\label{l-20p} If $A$ is a $p$-group of automorphisms of a group $G$, then there is an $A$-invariant Sylow  $p$-subgroup of $G$.
\end{lemma}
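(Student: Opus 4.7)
The plan is to exploit the classical Sylow count modulo $p$ together with the fact that $A$ is a $p$-group, so that the orbit structure of $A$ on the set of Sylow $p$-subgroups forces a fixed point.

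Concretely, I would let $\mathrm{Syl}_p(G)$ denote the set of all Sylow $p$-subgroups of $G$. The group $A$, acting by automorphisms on $G$, permutes $\mathrm{Syl}_p(G)$: if $P\in\mathrm{Syl}_p(G)$ and $a\in A$, then $P^a$ is again a Sylow $p$-subgroup of $G$. So we have a well-defined action of $A$ on the finite set $\mathrm{Syl}_p(G)$.

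By Sylow's theorem, $|\mathrm{Syl}_p(G)|\equiv 1\pmod{p}$, and in particular $|\mathrm{Syl}_p(G)|$ is coprime to $p$. Since $A$ is a $p$-group, every $A$-orbit on $\mathrm{Syl}_p(G)$ has size a power of $p$; summing the orbit sizes gives $|\mathrm{Syl}_p(G)|$, which is not divisible by $p$. Hence at least one orbit must have size $1$, i.e.\ there exists $P\in\mathrm{Syl}_p(G)$ with $P^a=P$ for every $a\in A$. This $P$ is the desired $A$-invariant Sylow $p$-subgroup.

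There is no real obstacle here: the entire argument is the standard ``$p$-group acting on a set of $p'$-order has a fixed point'' trick, combined with Sylow's congruence. The only minor point to mention explicitly (so the statement makes sense for $G$ not necessarily of order divisible by $p$) is that if $p\nmid |G|$ then the trivial subgroup is the unique Sylow $p$-subgroup of $G$ and is vacuously $A$-invariant, which is already covered by the orbit argument.
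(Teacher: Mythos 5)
Your argument is correct and complete: the action of $A$ on $\mathrm{Syl}_p(G)$ is well defined, all $A$-orbits have $p$-power size, and since $|\mathrm{Syl}_p(G)|\equiv 1\pmod p$ by Sylow's theorem, some orbit is a singleton, giving the $A$-invariant Sylow $p$-subgroup. The paper states this lemma as well known and gives no proof, so there is nothing to compare line by line; for the record, the other standard route is to pass to the semidirect product $H=GA$, choose a Sylow $p$-subgroup $P$ of $H$ containing $A$, and observe that $P\cap G$ is a Sylow $p$-subgroup of $G$ that is normal in $P$ (because $G\trianglelefteq H$), hence normalized by $A$. Both arguments are equally elementary; yours avoids forming the semidirect product, while the other avoids invoking the congruence part of Sylow's theorem.
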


An automorphism  $\alpha$ of a finite group $G$ is said to be coprime if the order of $\alpha$ is coprime to the order of $G$, that is, $(|G|,|\alpha |)=1$. The following lemma collects some well-known facts about coprime automorphisms of finite groups (see, for example, \cite{gore} and  \cite[Corollary~3.28]{isaacs}); we shall sometimes use these facts without special references.

\begin{lemma}\label{l-20} Let $A$ be a group of automorphisms of a group $G$.
\begin{itemize}

\item[\rm (a)] If $(|A|,|G|)=1$, then the group $G$ has an $A$-invariant Sylow $p$-subgroup for each prime $p\in\pi(G)$.

\item[\rm (b)] If $N$ is an $A$-invariant normal subgroup of $G$ such that $(|N|,|A|)=1$, then $C_{G/N}(A)=C_G(A)N/N$; in particular, if $(|A|,|G|)=1$, then $G=[G,A]C_G(A)$.
\end{itemize}
\end{lemma}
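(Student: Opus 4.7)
Both parts rest on the Schur--Zassenhaus theorem applied inside the semidirect product $\Gamma = G \rtimes A$. For part~(a), fix a Sylow $p$-subgroup $P$ of $G$; since $p \nmid |A|$, $P$ is also Sylow in $\Gamma$. The Frattini argument gives $\Gamma = G \cdot N_\Gamma(P)$, so the normal subgroup $N_G(P) \trianglelefteq N_\Gamma(P)$ has index $|A|$, coprime to $|N_G(P)|$. Schur--Zassenhaus produces a complement $A^*$ to $N_G(P)$ in $N_\Gamma(P)$, and the inclusion $A^* \cap G \leq A^* \cap N_G(P) = 1$ forces $A^*$ to be a complement to $G$ in $\Gamma$ as well. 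The conjugacy part of Schur--Zassenhaus (applicable because coprimality of $|A|$ and $|G|$ makes one of them odd, hence soluble by Feit--Thompson) then yields $A^* = A^{\gamma}$ for some $\gamma \in \Gamma$; writing $\gamma = g_0 a$ with $g_0 \in G$, $a \in A$ and using $A^a = A$, one obtains $A^* = A^{g_0}$, so $A$ normalizes the Sylow $p$-subgroup $P^{g_0^{-1}}$ of $G$.

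For the first assertion of part~(b), take $g \in G$ with $gN \in C_{G/N}(A)$; equivalently $[g,a] \in N$ for every $a \in A$, which rewrites as $g^{-1}Ag \subseteq NA$. Thus $A$ and $g^{-1}Ag$ are two complements to $N$ in $NA$, of common order $|A|$ coprime to $|N|$. Schur--Zassenhaus supplies $n \in N$ with $n^{-1}(g^{-1}Ag)n = A$; setting $c := g n^{-1}$, the element $c$ normalizes $A$. For each $a \in A$, the commutator $[c,a]$ lies in $G$ (since $G \trianglelefteq \Gamma$) and in $A$ (since $c$ normalizes $A$), hence in $G \cap A = 1$. So $c \in C_G(A)$ and $cN = gN$, proving the non-trivial inclusion $C_{G/N}(A) \subseteq C_G(A) N / N$; the reverse inclusion is immediate.

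Finally, the ``in particular'' clause is an instance of the first part of~(b) applied with $N = [G,A]$: since $A$ acts trivially on $G/[G,A]$ by the very definition of $[G,A]$, we have $G/[G,A] = C_{G/[G,A]}(A) = C_G(A)[G,A]/[G,A]$, whence $G = C_G(A) [G,A]$. The only conceptual subtlety is the appeal to the uniqueness-of-complements half of Schur--Zassenhaus in both (a) and (b); this is classical once Feit--Thompson is invoked to secure solubility of one factor, so that the lemma is genuinely a compendium of classical results that can be cited from either of the standard references mentioned by the authors.
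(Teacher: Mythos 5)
The paper does not prove this lemma at all: it is presented as a compendium of classical facts with pointers to Gorenstein and to Isaacs, to be used without special reference. Your argument is the standard Schur--Zassenhaus proof of those facts and is correct in substance; the only things worth flagging are two harmless bookkeeping slips in the conjugation computations. In (a), from $\gamma=g_0a$ one gets $A^{\gamma}=(A^{g_0})^{a}$, which is not $A^{g_0}$; the fix is to factor the other way, $\gamma=a g_1$ with $g_1=a^{-1}g_0a\in G$ (using normality of $G$ in $\Gamma$), whence $A^{\gamma}=(A^{a})^{g_1}=A^{g_1}$ and the conclusion that $A$ normalizes a $G$-conjugate of $P$ stands. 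In (b), the relation $n^{-1}(g^{-1}Ag)n=A$ says that $c=gn$, not $gn^{-1}$, normalizes $A$; since $n\in N$ this element still represents the coset $gN$, so the rest of your argument goes through verbatim. The appeal to Feit--Thompson to secure solubility of one factor for the conjugacy half of Schur--Zassenhaus is exactly the standard justification, consistent with the paper's reliance on the classification elsewhere.
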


 If a group $A$ acts by automorphisms on an elementary abelian $p$-group  $E$, then $E$ can be regarded as an $\mathbb{F}_pA$-module, in which the vector addition is the group multiplication in $E$ and the scalar multiplication by $i\in \mathbb{F}_p=\{0,1,2,\dots,p-1\}$ is taking the  $i$-th power.

 A bound for the rank of a $p$-group implies a bound for the rank of a group of its  automorphisms. The following lemma follows, for example, from \cite[Lemmas~2.1, 2.2, 2.3]{glasgow}.

 \begin{lemma}\label{l-raut}
 The rank of a group of  automorphisms of a finite $p$-group of rank~$r$ is bounded in terms of~$r$.
\end{lemma}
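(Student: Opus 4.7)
The plan is to split the bound into two parts by factoring through the action on the Frattini quotient. Let $P$ be the given finite $p$-group of rank~$r$, set $V=P/\Phi(P)$, and let $K\leqslant A$ be the kernel of the induced map $A\to \mathrm{Aut}(V)\cong \mathrm{GL}_d(\mathbb{F}_p)$, where $d=\dim_{\mathbb{F}_p} V\leqslant r$. A classical theorem of Burnside says that $K$ is a $p$-group, so it suffices to bound the ranks of $A/K$ and of $K$ separately in terms of $r$, uniformly in the prime~$p$.

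For the quotient $A/K$, the goal is to show that every finite subgroup of $\mathrm{GL}_n(\mathbb{F}_p)$ has rank bounded by a function of $n$ alone. Sylow-by-Sylow: the Sylow $p$-subgroup lies inside the upper unitriangular group and has $n$-bounded rank, while for $\ell\neq p$ the Sylow $\ell$-subgroups are, by Weir's theorem, iterated wreath products of cyclic groups of $n$-bounded rank. A Jordan-type structural result in positive characteristic (Brauer--Feit) provides, in any subgroup of $\mathrm{GL}_n(\mathbb{F}_p)$, a normal subgroup of $n$-bounded index whose composition factors are linear of dimension $\leqslant n$; piecing the Sylow bounds together along such a normal series yields the required uniform rank bound for $A/K$.

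For the kernel $K$: every $\kappa\in K$ sends $x\in P$ into $x\,\Phi(P)$, so the restriction of $K$ to any characteristic subgroup of $P$ acts trivially on its Frattini quotient as well. Using Lubotzky--Mann theory, I would replace $P$ by a characteristic powerful subgroup $P_0$ of $r$-bounded index and exploit the fact that the stability group of a powerful $p$-group of rank at most $r$ is itself a powerful $p$-group whose rank is $r$-bounded (the perturbations of a fixed basis of $P_0$ being parametrized by $\Phi(P_0)$, with the powerful structure controlling how they compose). The main obstacle is the first bound: a naive cardinality estimate on $\mathrm{GL}_n(\mathbb{F}_p)$ depends on $p$, so obtaining a rank bound independent of $p$ requires the detailed Sylow analysis together with a Jordan-type theorem, rather than the softer powerful-group arguments that suffice to handle $K$.
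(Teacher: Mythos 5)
Your overall architecture---pass to $V=P/\Phi(P)$, use Burnside's theorem to see that the kernel $K$ is a $p$-group, and then bound $A/K\leqslant \mathrm{GL}_d(\mathbb{F}_p)$ and $K$ separately---is exactly the standard route; the paper does not prove the lemma itself but cites \cite[Lemmas~2.1, 2.2, 2.3]{glasgow}, which proceed along these lines. However, two of your justifications fail as stated. For $A/K$: the Brauer--Feit theorem does \emph{not} provide a normal subgroup of $n$-bounded index in a finite subgroup of $\mathrm{GL}_n(\mathbb{F}_p)$; its index bound depends on the order of a Sylow $p$-subgroup as well as on $n$ (already $\mathrm{SL}_2(\mathbb{F}_p)$ has no soluble normal subgroup of index bounded independently of $p$). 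Even granting a Larsen--Pink-type decomposition, the Lie-type factors in characteristic $p$ are themselves subgroups of $\mathrm{GL}_n(\mathbb{F}_p)$, so bounding their rank this way is circular. The missing ingredient is the Guralnick--Lucchini theorem (a finite group all of whose Sylow subgroups are $d$-generated is $(d+1)$-generated): applied to an arbitrary subgroup $H\leqslant\mathrm{GL}_n(\mathbb{F}_p)$ together with your (correct) Sylow analysis, it gives the uniform rank bound directly, with no normal series needed.

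For $K$: the claim that $K$ acts trivially on the Frattini quotient of every characteristic subgroup of $P$ is false. Every inner automorphism lies in $K$ (since $[P,P]\leqslant\Phi(P)$), and inner automorphisms act nontrivially on $\Phi(P)/\Phi(\Phi(P))$ whenever $[\Phi(P),P]\not\leqslant\Phi(\Phi(P))$, e.g.\ for $P$ of maximal class and exponent $p$ of order $p^4$. Consequently, after replacing $P$ by a characteristic powerful subgroup $P_0$ of $r$-bounded index, the image of $K$ in $\mathrm{Aut}(P_0)$ need not land in the stability-type subgroup you want; you must first factor out its action on $P_0/\Phi(P_0)$ (of bounded rank by the first part) and also control the kernel of the restriction map $K\to\mathrm{Aut}(P_0)$. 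Finally, parametrizing the trivial-mod-$\Phi(P_0)$ automorphisms by tuples in $\Phi(P_0)^d$ only bounds the \emph{order} by $|\Phi(P_0)|^d$, which is not $r$-bounded; the rank bound requires the genuine congruence-subgroup structure of the automorphism group of a powerful group (the automorphisms trivial modulo $p$-th powers form a powerful $p$-group of rank at most $d^2$), which is a theorem that must be proved or cited, not a consequence of the counting remark.
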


We mention another folklore result about automorphisms of groups of given rank; see, for example, \cite[Lemma~2.13]{agks}.

\begin{lemma}\label{l-autsem}
If $A$ is a group of coprime automorphisms of a semisimple group  $H$ of rank~$r$, then $A$ has $r$-bounded order.
\end{lemma}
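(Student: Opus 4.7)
The plan is to decompose $H=S_1\times\cdots\times S_k$ into its non-abelian simple direct factors and reduce the bound on $|A|$ to bounds on the two pieces of $\mathrm{Aut}(H)$: the permutation action on $\{S_1,\dots,S_k\}$ and the product of automorphism groups of the individual factors.

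First I would bound the number of simple factors. Picking one involution in each $S_i$ produces a subgroup of $H$ of rank $k$, so $k\leqslant r$. Consequently, the image of $A$ under the natural homomorphism $\mathrm{Aut}(H)\to \mathrm{Sym}(k)$ permuting the factors has order dividing $k!$, hence at most $r!$. Let $A_0$ be the kernel of this permutation action; it has $r$-bounded index in $A$, and it suffices to bound $|A_0|$.

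Next I would analyse each factor separately. The group $A_0$ maps into $\prod_{i=1}^{k}\mathrm{Aut}(S_i)$, and we may assume $A$ acts faithfully, so this map is injective. Every element of $A_0$ still has order coprime to $|H|$ and, a fortiori, coprime to $|S_i|$. Since a non-trivial inner automorphism of $S_i$ has order dividing $|S_i|$, the composition $A_0\to\mathrm{Aut}(S_i)\to\mathrm{Out}(S_i)$ is injective on each direct factor contribution. Therefore $|A_0|\leqslant \prod_{i=1}^{k}|\mathrm{Out}(S_i)|$.

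The final and main ingredient is the bound $|\mathrm{Out}(S)|=f(r)$ for every non-abelian finite simple group $S$ of rank at most $r$. This is where the classification enters: for alternating groups one has $|\mathrm{Out}(A_n)|\leqslant 4$ and the rank of $A_n$ grows with $n$, giving the bound trivially; for sporadic groups $|\mathrm{Out}|$ is a bounded constant; for groups of Lie type $\mathrm{PSL}_n(p^f), \dots$ one uses that the Lie rank and the field degree $f$ are both controlled by the Prüfer rank (e.g.\ through the rank of a Sylow $p$-subgroup), while $|\mathrm{Out}|$ is a product of diagonal, field, and graph contributions each controlled by these parameters. Combining the three reductions yields that $|A|\leqslant r!\cdot\prod_{i\leqslant k}|\mathrm{Out}(S_i)|$ is $r$-bounded. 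The main obstacle is the case-by-case verification of the outer-automorphism bound in Step 3; everything else is elementary bookkeeping.
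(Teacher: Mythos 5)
Your argument is correct, and it is essentially the standard proof of this folklore fact; the paper itself does not prove the lemma but simply cites it as \cite[Lemma~2.13]{agks}. All three reductions are sound: $k\leqslant r$ via one involution per factor, the kernel $A_0$ of the permutation action embeds in $\prod_i\mathrm{Aut}(S_i)$, and coprimality kills the inner part since $a|_{S_i}\in\mathrm{Inn}(S_i)\cong S_i$ has order dividing both $|a|$ and $|S_i|$. The only genuinely CFSG-dependent content is the bound on $|\mathrm{Out}(S)|$ in terms of the rank of $S$, which you correctly isolate; for groups of Lie type the key point is that both the Lie rank and the field degree $f$ (with $q=p^f$) are at most the rank of a Sylow subgroup in the defining characteristic, after which $|A|\leqslant r!\cdot\prod_i|\mathrm{Out}(S_i)|$ is $r$-bounded.
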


For a group $A$ acting by automorphisms on a group $G$, recall the definition of the subset $I_G(A)=\{[g,\alpha]\mid g\in G,\;\alpha\in A\}$, where the commutators $[g,\alpha] =g^{-1}g^{\alpha}$ are considered in the natural semidirect product $GA$. Clearly, $[G,A]=\langle I_G(A)\rangle$. We shall use the following elementary properties without special references.

 \begin{lemma}\label{l-IGA}
 Let $A$ be a group acting by automorphisms on a group $G$.
\begin{itemize}

\item[\rm (a)] If $N$ is an $A$-invariant normal subgroup of $G$, then $I_{G/N}(A)=\{gN \mid g\in I_G(A)\}$.

\item[\rm (b)]  For any $\alpha\in A$ we have $|I_G(\alpha)|=|G:C_G(\alpha)|$; in particular, $|I_G(A)|\geqslant |G:C_G(\alpha)|$.

    \item[\rm (c)] Suppose that any subset of $I_G(A)$ generates a subgroup that can be generated by $r$ elements. Then this  hypothesis is inherited by $I_S(B)$ for any $B$-invariant section $S$ of $G$ and for any subgroup $B\leqslant A$.
    \end{itemize}
 \end{lemma}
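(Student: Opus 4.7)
The three parts are elementary observations; I would verify each directly.

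For part (a), both inclusions are checked by a one-line computation. If $g = [h,\alpha]$ with $h\in G$ and $\alpha\in A$, then $gN = h^{-1}h^{\alpha} N = (hN)^{-1}(hN)^{\alpha} = [hN,\alpha]$, which gives the inclusion $\{gN : g\in I_G(A)\}\subseteq I_{G/N}(A)$. Conversely, any element of $I_{G/N}(A)$ has the form $[hN,\alpha]$ for some $h\in G$, $\alpha\in A$ (using that $N$ is $A$-invariant so the action on $G/N$ is well defined), and this equals $[h,\alpha]N$. The key input is $A$-invariance of $N$, which makes the induced action well defined.

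For part (b), the standard argument is to show that the map $g\mapsto [g,\alpha]=g^{-1}g^{\alpha}$ from $G$ onto $I_G(\alpha)$ has fibres equal to the right cosets of $C_G(\alpha)$. Indeed, $g^{-1}g^{\alpha}=h^{-1}h^{\alpha}$ rearranges to $hg^{-1}=(hg^{-1})^{\alpha}$, i.e.\ $hg^{-1}\in C_G(\alpha)$. Hence $|I_G(\alpha)|=|G:C_G(\alpha)|$. The second claim is just $I_G(\alpha)\subseteq I_G(A)$.

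For part (c), the two reductions are independent and both trivial. If $B\leqslant A$, then by definition $I_H(B)\subseteq I_G(A)$ for any $B$-invariant subgroup $H\leqslant G$ (each $[h,\beta]$ with $h\in H$, $\beta\in B$ lies in $I_G(A)$), so the hypothesis is inherited for any subset of $I_H(B)$. If $K$ is a $B$-invariant normal subgroup of $H$, then part (a) applied inside $H$ gives $I_{H/K}(B)=\{hK : h\in I_H(B)\}$. Given any subset $Y\subseteq I_{H/K}(B)$, choose a lift $X\subseteq I_H(B)\subseteq I_G(A)$ with $X$ mapping bijectively onto $Y$; by hypothesis $\langle X\rangle$ is $r$-generated, and its image $\langle Y\rangle=\langle X\rangle K/K$ is then $r$-generated as well.

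There is no real obstacle: (a) and (b) are one-line manipulations with commutators, and (c) just combines (a) with the observation that a subset of the commutator set in a $B$-invariant section always lifts to a subset of $I_G(A)$, after which $r$-generation passes to the quotient along the canonical epimorphism.
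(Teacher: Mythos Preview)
Your proof is correct. The paper itself states this lemma without proof, treating all three parts as elementary facts to be used freely; your argument supplies exactly the routine verifications one would expect and matches the intended reasoning (part~(a) by the obvious two-way inclusion, part~(b) via the fibre computation $hg^{-1}\in C_G(\alpha)$, part~(c) by lifting subsets through part~(a) and noting $I_H(B)\subseteq I_G(A)$).
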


\section{The case of nilpotent groups}\label{s-nilp}

In this section we prove Theorem~A for groups of automorphisms of nilpotent groups, and this result will be used throughout subsequent sections.

\begin{theorem}\label{t-nilp} Suppose that $G$ is a nilpotent finite group admitting a group of automorphisms $A$ such that any subgroup generated by a subset of $I_G(A)$ can be generated by $r$ elements. Then $[G,A]$ has $r$-bounded rank.
\end{theorem}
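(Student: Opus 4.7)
The plan is to reduce to the case where $G$ is a finite $p$-group and then combine the hypothesis with the Lubotzky--Mann theory of powerful $p$-groups, following closely the strategy of \cite{agks} and \cite{tams}.

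First, since $G$ is a finite nilpotent group, $G = \prod_p G_p$ is the direct product of its Sylow subgroups, each of which is characteristic in $G$ and therefore $A$-invariant. By Lemma~\ref{l-IGA}(c) the hypothesis on $I_G(A)$ is inherited by $I_{G_p}(A)$ for every prime $p$, and $[G,A] = \prod_p [G_p,A]$. Since the rank of a direct product of $p$-groups of pairwise coprime orders equals the maximum of the ranks of the factors, it suffices to bound the rank of each $[G_p,A]$; thus we may assume $G$ is a finite $p$-group.

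Next, taking $S = I_G(A)$ in the hypothesis and using $\langle I_G(A)\rangle = [G,A]$ gives $d([G,A]) \leq r$, and by Lemma~\ref{l-IGA}(c) the bound $d([H/K,A]) \leq r$ holds on every $A$-invariant section $H/K$ of $G$. The task is to promote this uniform bound on the minimum number of generators to a bound on the full rank of $[G,A]$. I would proceed by induction on the nilpotency class $c$ of $G$. The base case $c = 1$ is immediate, since for an abelian $p$-group the rank coincides with the minimum number of generators. In the inductive step, set $Z = \gamma_c(G) \leq Z(G)$ and apply the inductive hypothesis to $G/Z$, which is of class $c-1$ and inherits the hypothesis by Lemma~\ref{l-IGA}(c): this yields an $r$-bounded rank for $[G,A]Z/Z \cong [G,A]/([G,A] \cap Z)$. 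The remaining task is then to bound the rank of $[G,A] \cap Z$, a subgroup of the abelian $p$-group $Z$ containing $[Z,A]$, the latter being already of $r$-bounded rank by the abelian base case.

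The main obstacle lies in this last step, since $[G,A] \cap Z$ may be strictly larger than $[Z,A]$: an element of $Z$ that happens to lie in $[G,A]$ need not be expressible as a product of commutators $[z,a]$ with $z \in Z$. Controlling this defect requires the full strength of the hypothesis (every subset of $I_G(A)$, not just $I_G(A)$ itself, must generate an $r$-generator subgroup) together with the Lubotzky--Mann theory of powerful $p$-groups (Lemma~\ref{l-24} and the machinery of \cite{LM}). As the wreath product $\mathbb{Z}/p \wr \mathbb{Z}/p$ illustrates, a bound on $d$ alone cannot force a rank bound, so the hypothesis must be applied simultaneously across many $A$-invariant sections in order to deduce the desired rank bound on $[G,A] \cap Z$, and hence on $[G,A]$.
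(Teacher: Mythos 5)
Your reduction to the case of a finite $p$-group is correct and matches the paper's first step, and the observation that the generation hypothesis passes to every $A$-invariant section is also right. But the rest of the proposal has two genuine problems. First, the induction on the nilpotency class $c$ of $G$ cannot yield an $r$-bounded answer: $c$ is not bounded in terms of $r$, and your inductive step adds a fresh rank contribution (that of $[G,A]\cap\gamma_c(G)$) at every level, so even if each such contribution were $r$-bounded you would only obtain a bound depending on both $r$ and $c$. Second, and decisively, you leave unresolved exactly the step that carries all the difficulty, namely converting the uniform bound on minimal numbers of generators into a bound on the rank (your ``$[G,A]\cap Z$ may be strictly larger than $[Z,A]$'' obstacle). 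Saying that this ``requires the full strength of the hypothesis together with the Lubotzky--Mann theory'' correctly identifies the problem but is not an argument, so the proposal proves only the easy reduction.

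The paper resolves this by a mechanism that involves no induction on the class. The two key facts are: (i) if $M$ is an $A$-invariant $p$-subgroup of $[G,A]$ normal in $G$ with $|I_M(A)|=p^m$, then $M\leqslant Z_{2m+1}(O_p([G,A]))$ (Lemma~\ref{l-22}); and (ii) when $G$ has exponent $p$ or $4$, a Thompson critical subgroup argument combined with (i) forces $[G,A]$ to have $r$-bounded nilpotency class, hence $r$-bounded rank $l(r)$ (Lemma~\ref{l-23}). Applying (ii) to the exponent-$p$ (or exponent-$4$) quotient of $N=\gamma_{2\lambda+1}([G,A])$ for a suitable $r$-bounded $\lambda$, and feeding the resulting order bound for $[N,A]$ back into (i), shows that $N$ is powerful (Lemma~\ref{l-powerful}); since $N$ is also generated by boundedly many elements (Lemma~\ref{l-25}), its rank equals its minimal number of generators by the Lubotzky--Mann theory, while $[G,A]/N$ has $r$-bounded class and is $r$-generated, so both pieces have $r$-bounded rank. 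To repair your write-up you would need to replace the class induction by this (or an equivalent) argument.
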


The proof largely follows the arguments in~\cite{tams} and \cite{agks}. Recall the usual notation $Z_i(H)$ and $\gamma_i(H)$ for the $i$-th term of the upper and lower central series of a group $H$, respectively.

\begin{lemma}\label{l-22}
Let $G$ be a group admitting a group of automorphisms $A$. Let $p$ be a prime and suppose that $M$ is an $A$-invariant $p$-subgroup of $[G,A]$ that is normal in $G$ such that $|[M,A]|=p^m$ for some non-negative integer $m$. Then $M\leqslant Z_{2m+1}(O_p([G,A]))$.
\end{lemma}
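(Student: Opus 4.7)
The approach is induction on $m$. The base case $m = 0$ is immediate from Lemma~\ref{l-ker}: since $I_M(A) = \{1\}$ forces $A$ to centralize $M$, and $M$ is an $A$-invariant normal subgroup of $G$ contained in $C_G(A)$, the lemma gives $[G,A] \leqslant C_G(M)$; hence $M \leqslant Z(O_p([G,A])) = Z_1(O_p([G,A]))$, as desired.

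For the inductive step ($m \geqslant 1$), the pivotal reduction is the inclusion $[M, O_p([G,A])] \leqslant [M,A]$. To prove this, I pass to $G/[M,A]$: by construction $A$ acts trivially on $M/[M,A]$, which is $A$-invariant and normal in $G/[M,A]$, so Lemma~\ref{l-ker} applied in the quotient yields that $[G,A]/[M,A]$ centralizes $M/[M,A]$. Thus $[M, [G,A]] \leqslant [M,A]$, and in particular $[M, P] \leqslant [M, A]$ with $P := O_p([G,A])$. Setting $N := [M, A]$, I observe that $N$ itself satisfies all hypotheses of the lemma (it is $A$-invariant, normal in $G$ since $M$ is normal in $GA$, and is a $p$-subgroup of $[G,A]$), with $|I_N(A)| \leqslant |I_M(A)| = p^m$ by Lemma~\ref{l-IGA}(c). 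If this inequality is strict, the inductive hypothesis applied to $N$ gives $N \leqslant Z_{2m-1}(P)$, and combined with $[M, P] \leqslant N$ this forces $M \leqslant Z_{2m+1}(P)$, completing this subcase.

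The remaining subcase is $|I_N(A)| = p^m$, whence $I_N(A) = I_M(A)$ and $N = [N, A]$. Here iterating the key reduction yields only $[M, P^{(i)}] \leqslant [M, A^{(i)}] = N$ for all $i \geqslant 1$, which does not descend further. To conclude, I would invoke Proposition~\ref{p-vved} to bound $|N|$ in terms of $p^m$; then, since the $p$-group $P$ acts on the $p$-group $N$, a standard argument produces a $P$-invariant series $N = N_0 \geqslant N_1 \geqslant \dots \geqslant N_k = 1$ with $|N_i/N_{i+1}| = p$ and trivial $P$-action on each factor. The Hall--Kaloujnine stability theorem then yields $[N, P^{(k)}] = 1$; combining with $[M, P] \leqslant N$ gives $M \leqslant Z_{k+1}(P)$.

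The main obstacle is establishing the sharp bound $k \leqslant 2m$, equivalently $|N| \leqslant p^{2m}$, in this $A$-perfect subcase. A natural attack is to analyze the derived series of $N$: using the identity $[gh, \alpha] = [g,\alpha]^h [h, \alpha]$, one shows that the image of $I_M(A)$ in $N/[N,M]$ essentially forms a subgroup of size at most $p^m$, giving $|N/[N, M]| \leqslant p^m$; a parallel argument within $[N, M]$, exploiting the identity $N = [N, A]$ to express further elements via iterated commutators, should yield the remaining factor of $p^m$ and hence the desired bound. This delicate commutator-counting step is the heart of the argument.
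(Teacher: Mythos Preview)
Your base case and the ``easy'' inductive subcase (where $|I_N(A)|<p^m$) are fine, and your pivotal reduction $[M,O_p([G,A])]\leqslant [M,A]$ is correct and does the work there. The trouble is the remaining subcase $I_N(A)=I_M(A)$, equivalently $N=[N,A]$. This case genuinely occurs---for instance whenever $A$ acts coprimely on $M$ with $C_M(A)=1$---so it cannot be dismissed, and your plan to dispose of it hinges on the sharp bound $|N|\leqslant p^{2m}$. You do not prove this bound; Proposition~\ref{p-vved} gives only a Schur-type estimate on $|N|$, which is nowhere near polynomial in $p^m$, and the commutator-counting sketch you offer for the remaining factor is not an argument. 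Without exactly $|N|\leqslant p^{2m}$ your route yields at best $M\leqslant Z_{f(m)}(O_p([G,A]))$ for some unspecified $f$, not the stated $Z_{2m+1}$.

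The paper avoids this stuck case by descending differently. With $K=O_p([G,A])$, instead of passing from $M$ to $[M,A]$ it passes to the quotient $M/(M\cap Z_2(K))$. The key observation is that if $M\not\leqslant Z(K)$ then the nontrivial normal subgroup $MZ(K)/Z(K)$ of the $p$-group $K/Z(K)$ meets the centre, so $M\cap Z_2(K)\not\leqslant Z(K)$; by Lemma~\ref{l-ker} this forces $I_{M\cap Z_2(K)}(A)\ne\{1\}$, whence $|I_{M/(M\cap Z_2(K))}(A)|<|I_M(A)|$. Thus the induction \emph{always} makes progress, and since the kernel lies in $Z_2(K)$ the inductive conclusion in the quotient lifts back with a loss of exactly two steps in the upper central series, giving $M\leqslant Z_{2m+1}(K)$. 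No $A$-perfect residual case ever arises.
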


\begin{proof} We use induction on $m$. If $m=0$, then $M\leqslant C_{G}(A)$ and therefore $M\leqslant Z([G,A])$ by Lemma~\ref{l-ker}. Now let $m\geqslant 1$. Set $K=O_p([G,A])$ to lighten the notation; note that $K$ is normal in $G$.
 If $M\leqslant Z(K)$, there is nothing to prove. If $M\not\leqslant Z(K)$, then the image of $M$ in $K/Z(K)$ has a non-trivial intersection with the centre of this quotient. In other words, $M\cap Z_2(K) \not\leqslant Z(K)$. Then Lemma~\ref{l-ker} implies that 
 $[M\cap Z_2(K), A]\neq 1$. It follows that $|[M/(M\cap Z_2(K)), A]|<|[M,A]|=p^m$. Indeed,
 $$
 [M/(M\cap Z_2(K)), A]=[M,A](M\cap Z_2(K))/(M\cap Z_2(K))\cong [M,A]/([M,A]\cap (M\cap Z_2(K))),
 $$
 where $[M,A]\cap (M\cap Z_2(K))\geqslant [M\cap Z_2(K), A]\ne 1$. Thus, $|[M/(M\cap Z_2(K)), A]|\leqslant p^{m-1}$.
 By induction, $M/(M\cap Z_2(K)) \leqslant Z_{2m-1}(K/(M\cap Z_2(K)))$, whence $M\leqslant Z_{2m+1}(K)$, as required.
 \end{proof}

Throughout the rest of this subsection, unless stated otherwise, $G$ is a $p$-group admitting a group of automorphisms $A$ such that any subgroup generated by a subset of $I_G(A)$ can be generated by $r$ elements.

\begin{lemma}\label{l-23} Suppose that $G$ is of prime exponent $p$ or of exponent~4. There exists a number $l(r)$ depending on $r$ only such that the rank of $[G,A]$ is at most $l(r)$.
\end{lemma}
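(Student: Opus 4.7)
The plan is to reduce everything to a bound on the nilpotency class of $[G,A]$ and then invoke Lemma~\ref{l-nil-rank}. First, taking the subset $S = I_G(A)$ itself in the hypothesis shows that $[G,A] = \langle I_G(A)\rangle$ is generated by at most $r$ elements. Since $[G,A]$ is a $p$-group of exponent $p$ (or $4$) and hence nilpotent, Lemma~\ref{l-nil-rank} then reduces the problem to producing a bound on the nilpotency class of $[G,A]$ depending only on $r$.

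To bound the class, set $H = [G,A]$ and apply Lemma~\ref{l-22} with $M = H$; the hypotheses are satisfied since $H$ is $A$-invariant, normal in $G$, a $p$-subgroup of $[G,A]$, and $O_p([G,A]) = H$. The conclusion reads $H \leqslant Z_{2m+1}(H)$ with $p^m = |I_H(A)|$, i.e.\ the class of $H$ is at most $2m+1$. So the whole problem is reduced to bounding $m$ by a function of $r$ alone, \emph{independently of $p$}.

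For this bound, observe that $I_H(A)\subseteq I_G(A)$, so by Lemma~\ref{l-IGA}(c) every subset of $I_H(A)$ still generates an $r$-generator subgroup; in particular $[[G,A],A] = \langle I_H(A)\rangle$ is itself $r$-generated. The argument now follows the pattern of \cite{tams} and \cite{agks}: one combines this inherited hypothesis with the bounded exponent of $G$ and the powerful $p$-group machinery of Lubotzky--Mann (Lemma~\ref{l-24}), possibly iterating along the descending series of $A$-commutators $H \geqslant [H,A] \geqslant [H,A,A] \geqslant \cdots$, to translate the uniform generator bound on subsets of $I_H(A)$ into a cardinality bound of the form $|I_H(A)| \leqslant p^{f(r)}$. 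Plugging this into Lemma~\ref{l-22} gives nilpotency class $\leqslant 2f(r)+1$, and then Lemma~\ref{l-nil-rank} yields the desired rank bound $l(r)$.

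The main obstacle is precisely the $p$-independence of the bound on $|I_H(A)|$ (equivalently, on the class of $[G,A]$). A direct appeal to the Restricted Burnside Problem would only give a $(p,r)$-bound on $|H|$, which is insufficient: the free Burnside-type behaviour shows that $r$-generated $p$-groups of exponent $p$ can have rank growing with $p$. The $p$-independence has to be forced by the full strength of the hypothesis---that \emph{every} subset of $I_G(A)$, not merely $I_G(A)$ itself, generates an $r$-generator subgroup---and this is precisely where the restriction to exponent $p$ or exponent $4$ becomes indispensable, as it prevents higher-power phenomena in $A$-invariant sections of $H$ that would otherwise reintroduce a $p$-dependence.
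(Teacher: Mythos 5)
Your opening reduction is the same as the paper's: $[G,A]$ is $r$-generated by hypothesis, so by Lemma~\ref{l-nil-rank} it suffices to bound the nilpotency class of $[G,A]$ in terms of $r$ alone. But the way you propose to get the class bound has a genuine gap. Applying Lemma~\ref{l-22} with $M=[G,A]$ requires $|I_{[G,A]}(A)|\leqslant p^{f(r)}$, and your argument for this is circular: Lemma~\ref{l-24} bounds the order of a $p$-group of bounded exponent in terms of its \emph{rank}, not its number of generators, so invoking it for $[H,A]=\langle I_H(A)\rangle$ presupposes exactly the rank bound you are trying to prove. (As you yourself note, an $r$-generated group of exponent $p$ can have rank, hence order, growing with $p$.) Iterating along $H\geqslant[H,A]\geqslant[H,A,A]\geqslant\cdots$ does not resolve this, since each term presents the same difficulty; the sentence ``one combines \dots to translate the uniform generator bound into a cardinality bound'' is precisely the missing argument, not a proof of it.

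The idea you are missing is Thompson's critical subgroup. The paper takes a critical subgroup $C$ of $[G,A]$ (normal in $G$, being characteristic in $[G,A]$) and works with $Z(C)$: since $[Z(C),A]$ is \emph{abelian} of exponent $p$ (or $4$) and $r$-generated, its order is at most $p^{r}$ (or $2^{2r}$) --- for abelian groups the generator bound really does convert into a cardinality bound, with no appeal to Lemma~\ref{l-24}. Hence $|I_{Z(C)}(A)|\leqslant p^{r}$ and Lemma~\ref{l-22}, applied to $M=Z(C)$ rather than to all of $[G,A]$, gives $Z(C)\leqslant Z_{2r+1}([G,A])$. Then $[[G,A],C]\leqslant Z(C)$ forces $C\leqslant Z_{2r+2}([G,A])$, so $\gamma_{2r+2}([G,A])$ centralizes $C$, and the self-centralizing property $C_{[G,A]}(C)=Z(C)$ yields $\gamma_{2r+2}([G,A])\leqslant Z(C)\leqslant Z_{2r+1}([G,A])$, bounding the class of $[G,A]$ by roughly $4r+3$. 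Your closing remarks correctly locate where the difficulty sits (the $p$-independence and the role of the exponent hypothesis), but without the critical subgroup, or some comparable device producing an abelian self-centralizing normal section, the proof does not go through.
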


\begin{proof}
Let $C$ be Thompson's critical subgroup of $[G,A]$ (see \cite[Theorem 5.3.11]{gore}). Note that $C$ is normal in $G$, since it is a characteristic subgroup of the normal subgroup $[G,A]$.
Observe that $[Z(C),A]$ is an $r$-generated abelian subgroup of exponent $p$ (or 4) and so the order of $[Z(C),A]$ is at most $p^r$ (or $2^{2r})$. By Lemma \ref{l-22}, $Z(C)$ is contained in $Z_{2r+1}([G,A])$ (or in $Z_{4r+1}([G,A])$). Since $[[G,A],C]$ is contained in $Z(C)$, we conclude that $C$ is contained in $Z_{2r+2}([G,A])$ (or in $Z_{4r+2}([G,A])$). Recall that $\gamma_{2r+2}([G,A])$ commutes with $Z_{2r+2}([G,A])$ and so in particular $\gamma_{2r+2} ([G,A])$ (respectively, $\gamma_{4r+2} ([G,A])$) centralizes $C$. By Thompson's theorem, $C_{[G,A]}(C)=Z(C)$. Thus $\gamma_{2r+2}([G,A])$ (respectively, $\gamma_{4r+2}([G,A])$) is contained in $Z(C)$, that is, the quotient $[G,A]/Z(C)$ is nilpotent of class $2r+1$ (respectively, of class $4r+1$). Since $Z(C)\leqslant Z_{2r+1}([G,A])$ (or $Z(C)\leqslant Z_{4r+1}([G,A])$), it follows that $[G,A]$ has $r$-bounded nilpotency class. Since $[G,A]$ is $r$-generated by hypothesis, by Lemma~\ref{l-nil-rank} the rank of $[G,A]$ is $r$-bounded, as desired.
\end{proof}

We will require the concept of powerful $p$-groups introduced by A.~Lubotzky and A.~Mann in \cite{LM}. A finite $p$-group $H$ is \emph{powerful} if and only if $[H,H] \leqslant H^p$ for $p\neq 2$ (or $[H,H]\leqslant H^4$ for $p = 2$).
Apart from the original paper \cite{LM}, information about the properties of powerful $p$-groups can also be found in the books \cite{DDM} or \cite{khukhu2}.

\begin{lemma}\label{l-powerful}
There exists a number $\lambda=\lambda(r)$ depending only on $r$ such that $\gamma_{2\lambda+1}([G,A])$ is powerful.
\end{lemma}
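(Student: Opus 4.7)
The plan is to use the exponent-$p$ case already settled in Lemma \ref{l-23} (via a reduction modulo $G^p$) together with standard $p$-power/commutator identities to exhibit a term of the lower central series of $H := [G,A]$ that is powerfully embedded in $H$. First I apply Lemma \ref{l-23} to the section $\bar G := G/G^p$ (or $G/G^4$ if $p = 2$): this is a $p$-group of exponent $p$ (resp.\ $4$) on which $A$ acts, with the hypothesis inherited by Lemma \ref{l-IGA}(c), so the rank of $[\bar G, A] = HG^p/G^p$ is at most $l(r) =: \mu$. By Lemma \ref{l-24} this quotient has order at most $p^{s(1,\mu)}$ (resp.\ $2^{s(2,\mu)}$), hence nilpotency class bounded by some $\nu = \nu(r)$, whence
\[
\gamma_{\nu+1}(H) \leqslant G^p.
\]

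Next I aim to show that for a suitable $r$-bounded $\lambda$ the characteristic subgroup $L := \gamma_{2\lambda+1}(H)$ is \emph{powerfully embedded} in $H$, meaning $[L,H] \leqslant L^p$ (with $4$th powers when $p=2$); by the Lubotzky--Mann theory \cite{LM} this implies that $L$ is itself powerful. A crucial additional input is that applying the hypothesis to the full set $I_G(A)$ already gives $d(H) \leqslant r$, since $H = \langle I_G(A)\rangle$. Using the Hall--Petrescu collection formula, which expresses $p$-th powers of commutators modulo higher-weight commutators, one converts the inclusion $\gamma_{\nu+1}(H) \leqslant G^p$ into successive inclusions of the form $\gamma_{i+1}(H) \leqslant \gamma_i(H)^p \gamma_{i+2}(H)$ along the lower central series of $H$. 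Bootstrapping these inclusions in the $r$-generated $p$-group $H$ (in the spirit of the arguments carried out in the coprime setting in \cite{agks} and \cite{tams}) then yields the required powerfully-embedded property for $L = \gamma_{2\lambda+1}(H)$ with $\lambda$ of order $O(\nu)$, and therefore $r$-bounded.

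The main obstacle is precisely this translation step. The inclusion $\gamma_{\nu+1}(H) \leqslant G^p$ controls high-weight commutators only modulo $p$-th powers from the \emph{ambient} group $G$, whereas the powerfully-embedded condition requires $p$-th powers from the specific characteristic subgroup $L$ of $H$. Closing this gap is where commutator calculus enters essentially, and where we repeatedly exploit the fact that the hypothesis passes to $A$-invariant characteristic sections of $H$ (Lemma \ref{l-IGA}(c)), so that Lemma \ref{l-23} can be re-applied inside the lower central series of $H$ at each stage of the bootstrap.
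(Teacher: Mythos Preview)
Your proposal has a genuine gap, and you have essentially identified it yourself: the inclusion $\gamma_{\nu+1}(H)\leqslant G^p$ only controls high-weight commutators modulo $p$-th powers coming from the \emph{ambient} group $G$, whereas powerful embedding requires $[L,H]\leqslant L^p$. The Hall--Petrescu formula does not bridge this gap, and the vague ``bootstrap'' you describe --- iterating Lemma~\ref{l-23} on characteristic sections --- is not actually carried out; there is no mechanism offered for converting $p$-th powers of arbitrary elements of $G$ into $p$-th powers of elements of the specific subgroup $L$.

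The paper avoids this difficulty entirely by a much simpler reduction. Instead of passing to $G/G^p$, one passes to $N/N^p$ (or $N/N^4$ if $p=2$) for $N=\gamma_{2\lambda+1}(H)$: to prove $N'\leqslant N^p$ it suffices to assume $N$ itself has exponent $p$ (or $4$) and show that $N$ is then abelian. Under that assumption, Lemma~\ref{l-23} applied to the $A$-invariant subgroup $N$ gives that $[N,A]$ has rank at most $l(r)$, hence $|[N,A]|\leqslant p^{\lambda}$ by Lemma~\ref{l-24} with $\lambda=s'(l(r))$; in particular $|I_N(A)|\leqslant p^{\lambda}$. Now Lemma~\ref{l-22} (not Lemma~\ref{l-23}) places $N$ inside $Z_{2\lambda+1}(H)$, and since $N=\gamma_{2\lambda+1}(H)$ commutes with $Z_{2\lambda+1}(H)$, the subgroup $N$ is abelian. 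The point you missed is that one should quotient by $N^p$ rather than $G^p$, and that the order bound on $I_N(A)$ feeds directly into the upper-central-series estimate of Lemma~\ref{l-22}; no commutator calculus of Hall--Petrescu type is needed.
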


\begin{proof}
Let $s'(m)=s(1,m)$ if $p\ne 2$, and $s'(m)=s(2,m)$ if $p=2$ for the function $s(k,m)$ as in Lemma~\ref{l-24}, and let $l(r)$ be as in Lemma~\ref{l-23}. Let $\lambda=s'(l(r))$ and consider $N=\gamma_{2\lambda+1}([G,A])$. Note that $N$ is normal in $G$. In order to show that $N'\leqslant N^p$ (or $N'\leqslant N^4$ when $p=2$), we assume that $N$ is of exponent $p$ (or 4) and prove that $N$ is abelian.

Since the subgroup $N$ is of exponent $p$ (or $4$), the rank of $[N,A ]$ is at most $l(r)$ by Lemma~\ref{l-23}. Then $|[N,A ]|\leqslant p^{s'(l(r))}=p^\lambda$ by Lemma~\ref{l-24}, whence $N\leqslant Z_{2\lambda+1}([G,A])$ by Lemma~\ref{l-22}. Since $[\gamma_i([G,A]),Z_i([G,A])]=1$ for any positive integer $i$, we conclude that $N$ is abelian, as required.
\end{proof}

\begin{lemma}\label{l-25}
For any $i\geqslant 1$, there exists a number $m_i=m(i,r)$ depending only on $i$ and $r$ such that $\gamma_i([G,A])$ is an $m_i$-generated group.
\end{lemma}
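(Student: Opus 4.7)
The plan is to reduce Lemma~\ref{l-25} to an $r$-bounded upper bound on $d(N)$ for the powerful subgroup $N:=\gamma_{2\lambda+1}([G,A])$ furnished by Lemma~\ref{l-powerful}. Once such a bound is in hand, the Lubotzky--Mann fact that in any powerful $p$-group $P$ every subgroup $H\leqslant P$ satisfies $d(H)\leqslant d(P)$ immediately gives $d(\gamma_i([G,A]))\leqslant d(N)$ for every $i\geqslant 2\lambda+1$. For $1\leqslant i\leqslant 2\lambda$, the quotient $[G,A]/N$ is $r$-generated and of nilpotency class at most $2\lambda$, so it has $r$-bounded rank by Lemma~\ref{l-nil-rank}; the elementary inequality $d(\gamma_i([G,A]))\leqslant d(\gamma_i([G,A])/N)+d(N)$ then finishes the reduction.

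To bound $d(N)$ I would pass to the section $\bar G:=G/\Phi(N)$, where $\Phi(N)=N^p[N,N]$ for odd $p$ (and $N^2$ when $p=2$) is normal in $G$ because $N$ is characteristic in $[G,A]$, which is normal in $G$. By Lemma~\ref{l-IGA}(c) the hypothesis on $I_G(A)$ is inherited by $\bar G$. Setting $\bar N:=N/\Phi(N)$, which is elementary abelian, the commutator subgroup $[\bar N, A]$ is generated by the elements $[n,\alpha]\Phi(N)$ with $n\in N$ and $\alpha\in A$, each of which lies in $I_{\bar G}(A)$. Hence $[\bar N, A]$ is $r$-generated by the inherited hypothesis; being elementary abelian, it has order at most $p^r$, and therefore $|I_{\bar N}(A)|\leqslant p^r$.

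Next, Lemma~\ref{l-22} applied to $\bar N$ inside $\bar G$ (noting that $O_p([\bar G, A])=[\bar G, A]=[G,A]/\Phi(N)$ since the ambient group is a $p$-group) yields $\bar N\leqslant Z_{2r+1}([G,A]/\Phi(N))$. Since $\bar N=\gamma_{2\lambda+1}([G,A]/\Phi(N))$, the inclusion $\gamma_{2\lambda+1}(\bar H)\leqslant Z_{2r+1}(\bar H)$ inside $\bar H:=[G,A]/\Phi(N)$ forces $\gamma_{2\lambda+2r+2}(\bar H)=1$, so $\bar H$ has $r$-bounded nilpotency class. Being $r$-generated as well, $\bar H$ then has $r$-bounded rank by Lemma~\ref{l-nil-rank}; in particular $d(N)=\dim_{\mathbb{F}_p}\bar N$ is $r$-bounded, as required.

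The hard part, I expect, is the bookkeeping in the passage to $\bar G=G/\Phi(N)$: verifying that $\Phi(N)$ is $G$-normal, invoking Lemma~\ref{l-IGA}(c) to transfer the $I_G(A)$-hypothesis, and recognising $[\bar N, A]$ as being generated by a subset of $I_{\bar G}(A)$. Once that is in place, the argument is a short combination of Lemmas~\ref{l-22}, \ref{l-powerful}, and~\ref{l-nil-rank} with the Lubotzky--Mann property that $d$ coincides with rank for powerful $p$-groups.
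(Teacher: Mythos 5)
Your argument is correct, and its engine is exactly the paper's: pass to the Frattini quotient of a term of the lower central series, note that the commutator of the resulting elementary abelian section with $A$ is $r$-generated and hence of order at most $p^r$, feed this into Lemma~\ref{l-22} to place that section in $Z_{2r+1}$, deduce a bound on the nilpotency class of $[G,A]$ modulo the Frattini subgroup, and finish with Lemma~\ref{l-nil-rank}. The difference is only in the packaging. The paper runs this argument directly with $N=\gamma_i([G,A])$ for the given $i$, getting class at most $i+2r$ and hence an $(i,r)$-bounded number of generators --- which is all the lemma claims. You run it only for $i=2\lambda+1$ and then propagate to all $i$ via Lemma~\ref{l-powerful} and the Lubotzky--Mann equality of rank and minimal generator number for powerful $p$-groups (for $i\geqslant 2\lambda+1$), plus the bounded-class quotient $[G,A]/N$ (for $i\leqslant 2\lambda$). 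This buys a bound independent of $i$, a mild strengthening that the paper only extracts later, in the proof of Theorem~\ref{t-nilp}, by combining Lemmas~\ref{l-25} and~\ref{l-powerful} in essentially the way you do here; the cost is that your proof presupposes Lemma~\ref{l-powerful}, which is legitimate (it precedes Lemma~\ref{l-25} and does not use it) but makes the lemma depend on machinery it does not need. The individual steps --- normality of $\Phi(N)$ in $G$, inheritance of the hypothesis via Lemma~\ref{l-IGA}(c), the identification $O_p([\bar G,A])=[\bar G,A]$ because $G$ is a $p$-group throughout this section, and the class bound $2\lambda+2r+1$ --- all check out.
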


\begin{proof}
Let $N=\gamma_i([G,A])$, which is a normal subgroup of $G$. We can pass to the quotient $G/\Phi(N)$ and assume that $N$ is elementary abelian. It follows that $|[N, A]|\leqslant p^r$.  Then $N\leqslant Z_{2r+1}([G,A])$ by Lemma \ref{l-22}, and therefore $[G,A]$ has nilpotency class bounded only in terms of $i$ and $r$. Since $[G,A]$ is $r$-generated, the rank of $[G,A]$ is also $(i,r)$-bounded by Lemma~\ref{l-nil-rank}. In particular, $N$ is $m_i$-generated for some $(i,r)$-bounded number $m_i$.
\end{proof}

We are now ready to prove Theorem \ref{t-nilp}.

\begin{proof}[Proof of Theorem~\ref{t-nilp}] Recall that $G$ is a nilpotent group admitting a group of automorphisms $A$ such that any subgroup generated by a subset of $I_G(A)$ can be generated by $r$ elements. We need to show that $[G,A]$ has $r$-bounded rank. The rank of $[G,A]$ is equal to the rank of $[P,A]$, where $P$ is some Sylow $p$-subgroup of $G$. Therefore it is sufficient to prove the theorem in the case where $G$ is a $p$-group, which is what we assume from now on.

 Let $s'(m)=s(1,m)$ if $p\ne 2$, and $s'(m)=s(2,m)$ if $p=2$ for the function $s(k,m)$ as in Lemma~\ref{l-24}, and let $l(r)$ be as in Lemma \ref{l-23}. Take $N=\gamma_{2\lambda+1}([G,A])$, where $\lambda=\lambda(r)=s'(l(r))$; note that $N$ is normal in $G$. Let $d$ be the minimum number of generators of $N$. The number $d$ is $r$-bounded by Lemma~\ref{l-25}, and $N$ is a powerful $p$-group by Lemma \ref{l-powerful}. By the properties of powerful $p$-groups (see, for example, \cite[Theorem 2.9]{DDM}) the rank of $N$ is equal to $d$ and therefore is $r$-bounded. Since the nilpotency class of $[G,A]/N$ is $r$-bounded (recall that $\lambda$ depends only on $r$) and $[G,A]$ is $r$-generated, the rank of $[G,A]/N$ is also $r$-bounded by Lemma~\ref{l-nil-rank}. Since the rank of $[G,A]$ is at most the sum of the ranks of $[G,A]/N$ and $N$, the result follows.
\end{proof}

\section{Coprime automorphisms}

Throughout this section, let $A$ be a group of \emph{coprime} automorphisms of a finite group $G$ such that any subset of $I_G(A)$ generates a subgroup that can be generated  by $r$ elements. One of the main theorems of \cite{agks} tells us that then $[G,A]$ has $r$-bounded rank.

\begin{theorem}[{\cite[Theorem~1.4]{agks}}]\label{t-coprime}
Suppose that $G$ is a finite group admitting a group of coprime automorphisms $A$ such that, for a positive integer $r$, any subgroup generated by a subset of $I_G(A)$ can be generated by $r$ elements. Then $[G,A]$ has $r$-bounded rank.
\end{theorem}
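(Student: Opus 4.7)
The plan is to prove Theorem~\ref{t-coprime} by induction on $|G|$, combining the coprime reduction with a layer-by-layer analysis of the generalized Fitting series, and handling the semisimple layer via the classification of finite simple groups.

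First I would perform the standard coprime reductions. By Lemma~\ref{l-20}(b) we have $G=[G,A]C_G(A)$, so replacing $G$ by $[G,A]$ does not change the commutator subgroup we wish to bound, and the hypothesis is preserved because it passes to $A$-invariant sections by Lemma~\ref{l-IGA}(c). Thus I assume $G=[G,A]$, and in particular $[G,A,A]=[G,A]$. I would then set up induction on $|G|$: for any proper $A$-invariant section $S$ of $G$ the hypothesis still holds with the same $r$, so the conclusion applies to $S$.

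For the inductive step, I would study the generalized Fitting subgroup $F^*(G)=F(G)\cdot E(G)$ and argue layer by layer. The rank of $[G,A]/([G,A]\cap F^*(G))\cong [G/F^*(G),A]$ is $r$-bounded by the inductive hypothesis applied to $G/F^*(G)$. For the Fitting layer, Theorem~\ref{t-nilp} applied to the nilpotent $A$-invariant group $F(G)$ bounds the rank of $[F(G),A]$; using coprimality ($F(G)=[F(G),A]C_{F(G)}(A)$ and Lemma~\ref{l-ker}) one controls the part of $F(G)$ that actually sits inside $[G,A]$. Combined with Lemma~\ref{l-212} (rank bounds generalized Fitting height), once we have such bounds on each layer we obtain the full bound on the rank of $[G,A]$.

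The main obstacle is the semisimple layer $E(G)$, and this is where the argument must invoke CFSG. Two things need to be bounded: the number of quasisimple subnormal factors $Q_i$ of $E(G)$ and the rank of each factor $Q_i$. For the number of factors, if $A$ has an orbit of length $k$ on $\{Q_i\}$, one can cook up roughly $k$ ``diagonal'' elements in $I_G(A)$ (take a nontrivial $g\in Q_{i_1}$ and commute with elements of $A$ carrying $Q_{i_1}$ successively to the other $Q_{i_j}$); the projections on distinct factors are independent, so the subgroup they generate has rank at least a constant times $k$, forcing $k$ to be $r$-bounded. Combining this with Lemma~\ref{l-21} and the fact that each factor contains an involution, the total number of factors is bounded. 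For the rank of a single factor: by Lemma~\ref{l-autsem} coprime automorphism groups of a semisimple group of rank $r$ have $r$-bounded order, but we still need to bound the rank of the simple factor itself. Here one uses that a simple group $S$ on which $A$ acts nontrivially satisfies $[S,A]=S$, so $S$ is generated by $I_S(A)$, and the $r$-generation of all subsets of $I_S(A)$ forces, via the CFSG classification of simple groups together with known facts on minimal numbers of generators of Lie-type groups, that the rank of $S$ is $r$-bounded. This case-by-case verification in the simple groups is the hard technical step; everything else is assembled from Theorem~\ref{t-nilp}, the coprime-action toolkit of \S\ref{s-prel}, and a standard induction.
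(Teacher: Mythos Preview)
This theorem is \emph{not} proved in the present paper: it is quoted verbatim from \cite[Theorem~1.4]{agks} and used as a black box throughout \S3--\S5. So there is no ``paper's own proof'' to compare against here; the paper simply imports the result and then derives further consequences (Lemmas~\ref{l-5}--\ref{l-p-7}, Proposition~\ref{p-gen2}) that feed into the non-coprime arguments.

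That said, your outline is in the right spirit and is broadly consistent with what one can infer about the proof in \cite{agks} from the auxiliary results cited here (e.g.\ \cite[Lemma~2.9]{agks} on generating a simple group by two $A$-invariant nilpotent subgroups, \cite[Lemma~3.9]{agks} on orbits of simple factors, \cite[Proposition~3.13]{agks} on the action on $G/F^*(G)$). Two points in your sketch would need tightening before it could stand on its own. First, invoking Lemma~\ref{l-212} to bound the generalized Fitting height is circular: that lemma presupposes a bound on the rank of $G$, which is exactly what you are trying to establish; the induction has to be organized differently (for instance via a direct bound on the action on $G/F^*(G)$, as \cite[Proposition~3.13]{agks} provides). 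Second, bounding the rank of $[F(G),A]$ and of $[G,A]F^*(G)/F^*(G)$ does not by itself bound the rank of $[G,A]$, since $[G,A]\cap F^*(G)$ can be strictly larger than $[F^*(G),A]$; your phrase ``one controls the part of $F(G)$ that actually sits inside $[G,A]$'' is precisely the place where real work is needed. Your treatment of the semisimple layer (bounding orbit lengths and then the rank of each simple factor via CFSG) is the correct strategy, though of course the ``case-by-case verification'' you allude to is the substantive content of \cite{agks}.
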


Apart from this result,
we shall also need some consequences of certain preparatory lemmas in \cite{agks} and \cite{tams}, which are proved in this section.
Although the next lemmas will be used here only for cyclic groups $A$ of coprime automorphisms, it is natural to produce general lemmas for arbitrary  $A$ when this comes at no extra cost. The following lemma is similar to \cite[Lemma~2.6]{tams}.

\begin{lemma}\label{l-552} Suppose that $G=[G,A]$. Suppose that $N$ is an $A$-invariant normal subgroup such that the quotient $G/N$ is generated by $s$ elements from $I_{G/N}(A)$ and $[N,A]$ is generated by $t$ elements from $I_N(A)$. Then $G$ can be generated by $s+t$ elements of $I_G(A)$.
\end{lemma}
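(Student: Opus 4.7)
The plan is to induct on $|G|$, using Lemma~\ref{l-IGA}(a) to lift generators from quotients to $I_G(A)$. Concretely, the $s$ given generators of $G/N$ in $I_{G/N}(A)$ lift to some $g_1,\dots,g_s\in I_G(A)$, and the $t$ given generators of $[N,A]$ already lie in $I_N(A)\subseteq I_G(A)$; the goal is to produce $s+t$ elements of $I_G(A)$ that generate $G$. I would split into two cases depending on whether $[N,A]$ is trivial.

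If $[N,A]\neq 1$, I would pass to the strictly smaller group $\bar G=G/[N,A]$ with subgroup $\bar N=N/[N,A]$; here the hypotheses hold with $t$ replaced by $0$, since $[\bar N,A]=1$ and $\bar G/\bar N\cong G/N$ is still generated by the $s$ given commutator classes. The inductive hypothesis gives $s$ elements of $I_{\bar G}(A)$ generating $\bar G$; lifting these via Lemma~\ref{l-IGA}(a) to $I_G(A)$ and adjoining $h_1,\dots,h_t$ produces $s+t$ elements of $I_G(A)$ whose generated subgroup maps onto $\bar G$ and contains $[N,A]$, hence equals~$G$.

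If $[N,A]=1$, then $A$ centralizes $N$ and, by Lemma~\ref{l-ker} together with $G=[G,A]$, we get $N\leqslant Z(G)$. I would then induct also on $|N|$: the case $N=1$ is immediate, so assume $N\neq 1$ and pick a minimal $A$-invariant normal subgroup $N_0\leqslant N$ of $G$. Centrality and triviality of the $A$-action on $N$ force $N_0$ to be cyclic of prime order~$p$. The inductive hypothesis applied to $G/N_0$ (in which the image of $N$ still has trivial commutator with~$A$) produces $s$ lifts $g_1',\dots,g_s'\in I_G(A)$ with $H':=\langle g_1',\dots,g_s'\rangle$ satisfying $H'N_0=G$. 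If $N_0\leqslant H'$ then $H'=G$; otherwise $H'\cap N_0=1$ and centrality of $N_0$ makes $G=H'\times N_0$ an internal direct product of groups.

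The last subcase is the heart of the argument, and it is precisely where the coprime hypothesis of the section becomes essential. Since $|A|$ is coprime to $|N_0|=p$, the failure of $A$ to preserve $H'$ is encoded by a $1$-cocycle $c\colon A\to \mathrm{Hom}(H',N_0)$ that is a coboundary by the Maschke-type vanishing $H^1(A,\mathrm{Hom}(H',N_0))=0$, which yields an $A$-invariant complement $\tilde H$ with $G=\tilde H\times N_0$. Then $[G,A]\leqslant \tilde H$, while $G=[G,A]$ forces $G=\tilde H$, contradicting $\tilde H\cap N_0=1$ and $N_0\neq 1$. Hence $H'=G$ and the induction closes. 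I anticipate the main difficulty to be executing this cohomological splitting cleanly; the coprime assumption is genuinely needed there, as the parallel statement is known to fail when $A$ is allowed to share primes with~$G$.
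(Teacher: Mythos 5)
Your first case contains the essential gap: you pass to the quotient $G/[N,A]$, but $[N,A]$ need not be normal in $G$. It is normal in $N$ and $A$-invariant (hence normal in $NA$), but for $g\in G$ one only has $[n,a]^g=[n^g,a]\cdot[(n^g)^a,[a,g]]\in [N,A]\,[N,G]$, and in general $[N,A]^g\neq [N,A]$. A concrete instance (even coprime, and with $G=[G,A]$): take $N=\mathbb{F}_{25}$ as an elementary abelian group of order $25$, $G=N\rtimes C_3$ with $C_3\leqslant \mathbb{F}_{25}^{\times}$ acting irreducibly, and $A=\langle\sigma\rangle$ of order $2$ with $\sigma$ the Frobenius $x\mapsto x^5$; then $[N,A]$ is the trace-zero line, which is not $C_3$-invariant. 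One could instead quotient by the normal closure $[N,A]^G$, but then your $t$ adjoined generators account only for $[N,A]$, not for $[N,A]^G$, and the induction does not close. The paper's proof is built precisely around repairing this point: setting $H=\langle a_1,\dots,a_s\rangle$ for the lifted generators, it uses $G=NH$ and the normality of $[N,A]$ in $N$ to get $[N,A]^G=[N,A]^H$, which therefore already lies in $\langle a_1,\dots,a_s,b_1,\dots,b_t\rangle$; after that, Lemma~\ref{l-ker} applied to the image of $N$ in $G/[N,A]^H$ together with $G=[G,A]$ gives $G=[N,A]^HH$ in a few lines, with no case split and no induction.

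Your second case is internally correct, but it leans on coprimality (vanishing of $H^1(A,\mathrm{Hom}(H',N_0))$ to produce an $A$-invariant complement), and your closing claim that the coprime assumption is ``genuinely needed'' for this lemma is not right: the paper's proof of Lemma~\ref{l-552} uses no coprimality at all, and what fails in the non-coprime, non-soluble setting are the main rank theorems, not this elementary generation lemma. Even granting the section-wide coprime hypothesis, the direct argument above subsumes both of your cases simultaneously; in particular the delicate splitting $G=H'\times N_0$ and the cohomological fixed-point argument are avoidable.
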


\begin{proof} Let $[N,A]$ be generated by elements $b_1,\dots,b_t\in I_N(A)$, and let $G/N$ be generated by elements $\bar a_1,\dots,\bar a_s\in I_{G/N}(A)$. In accordance with Lemma~\ref{l-IGA}(a) choose some pre-images $a_1,\dots, a_s\in I_G(A)$ of $\bar a_1,\dots,\bar a_s$. We claim that $G=\langle a_1,\dots,a_s,b_1,\dots,b_t\rangle$.
We set $H=\langle a_1,\dots,a_s\rangle$, so that $G=NH$. Since $[N,A]$ is normal in $N$, the normal closure $ [N,A]^G= [N,A]^H$ is contained in $\langle a_1,\dots,a_s,b_1,\dots,b_t\rangle$. Since the image of $N$ in the quotient $G/ [N,A]^H$ is centralized by $A$, it is central in $G/[N,A]^H=[G,A]/[N,A]^H$ by Lemma~\ref{l-ker}, and therefore the image of $H$ becomes normal. Thus, $[N,A]^H H$ is normal in $G$. Obviously, $A$ acts trivially on $G/ [N,A]^H H=NH/ [N,A]^H H$. Since $G=[G,A]$, we conclude that $G= [N,A]^H H$, and the result follows.
\end{proof}

\begin{lemma}\label{l-5-nilp} Suppose that $A=\langle\varphi\rangle$ is cyclic,  $G=[G,A]$,  and $G$ is nilpotent. Then $G$ is generated by $r$  elements of $I_G(\varphi)$.
\end{lemma}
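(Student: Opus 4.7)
The plan is to reduce modulo the Frattini subgroup $\Phi(G)$ and exploit coprime action to show that $I_G(\varphi)$ maps onto $G/\Phi(G)$; then an $r$-element generating set can simply be lifted from a generating set of this quotient.

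First I would note that the section hypothesis, applied to the full set $I_G(A)$, says that $[G,A]=G$ is $r$-generated; hence the minimum number of generators $d(G)$ satisfies $d(G)\leqslant r$. Since $G$ is nilpotent, $G=\prod_{p}P_p$ is the direct product of its Sylow subgroups; each $P_p$ is characteristic, hence $\varphi$-invariant, and $\Phi(G)=\prod_p\Phi(P_p)$. Moreover $[G,\varphi]=\prod_p[P_p,\varphi]$, so from $G=[G,\varphi]$ we get $P_p=[P_p,\varphi]$ for every $p\mid |G|$.

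Next I would pass to the elementary abelian quotient $\bar P_p=P_p/\Phi(P_p)$, viewed as an $\mathbb{F}_p\langle\varphi\rangle$-module. Since $\varphi$ is coprime to $|G|$, hence to $p$, we have the coprime Fitting-type decomposition $\bar P_p=[\bar P_p,\varphi]\oplus C_{\bar P_p}(\varphi)$ (this is just Lemma~\ref{l-20}(b) in the abelian setting). The equality $[\bar P_p,\varphi]=\bar P_p$ forces $C_{\bar P_p}(\varphi)=0$, so the $\mathbb{F}_p$-linear operator $\varphi-1$ is injective, and therefore bijective, on $\bar P_p$. Putting this together over all primes, $\varphi-1$ is bijective on $G/\Phi(G)$. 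Consequently, for every $\bar g\in G/\Phi(G)$ there exists $h\in G$ with $\bar g=[h,\varphi]\Phi(G)$; in other words, the natural image of $I_G(\varphi)$ in $G/\Phi(G)$ is all of $G/\Phi(G)$.

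Choose elements $x_1,\dots,x_d\in I_G(\varphi)$ whose images generate $G/\Phi(G)$; by the Frattini property of nilpotent groups they generate $G$. Since $d\leqslant r$, padding with $r-d$ copies of $[1,\varphi]=1\in I_G(\varphi)$ yields $r$ elements of $I_G(\varphi)$ that still generate $G$, as required.

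The only delicate point is the invertibility of $\varphi-1$ on $G/\Phi(G)$; once the coprime-action decomposition is in place, this follows immediately from $G=[G,\varphi]$, and the rest is a standard Frattini lift. No further machinery beyond the material already developed in the excerpt is needed.
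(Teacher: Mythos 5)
Your proof is correct, and it takes a genuinely different route through the key step. The paper argues Sylow--by--Sylow: for each Sylow subgroup $Q_i=[Q_i,\varphi]$ it extracts, via the Burnside Basis Theorem, $r$ generators from $I_{Q_i}(\varphi)$, and then combines across primes with the identity $[x_{1j}\cdots x_{mj},\varphi]=[x_{1j},\varphi]\cdots[x_{mj},\varphi]$ together with the observation that a cyclic group generated by a product of commuting elements of pairwise coprime orders contains each factor. You instead work with the global Frattini quotient and show that the commutator map is \emph{surjective} onto $G/\Phi(G)$, so that \emph{any} generating set of $G/\Phi(G)$ of size $d(G)\leqslant r$ lifts to elements of $I_G(\varphi)$. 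Both devices are addressing the same obstruction: a generating set of a direct product $\prod_p \bar P_p$ over several primes cannot in general be extracted from an arbitrary generating set of the product, so one cannot simply take the union of bases of the $\bar P_p$ coming from commutators. Your surjectivity argument dissolves the obstruction at once, and is arguably cleaner. One small remark: the detour through the coprime decomposition $\bar P_p=[\bar P_p,\varphi]\oplus C_{\bar P_p}(\varphi)$ and injectivity of $\varphi-1$ is not needed; on an abelian group the set of values of the endomorphism $v\mapsto[v,\varphi]$ is already a subgroup, namely $[\bar P_p,\varphi]$, so the hypothesis $[\bar P_p,\varphi]=\bar P_p$ gives surjectivity directly, without invoking coprimality at that point (coprimality is of course still used, via Lemma~\ref{l-gen} and the standing hypotheses of the section, elsewhere).
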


\begin{proof} By the Burnside Basis Theorem for every Sylow $q$-subgroup $Q$ of $G$ the subgroup $[Q,\varphi ]$ is generated by $r$ elements from $I_Q(\varphi  )$.
Let $G=Q_1\times\dots \times Q_m$ and let $Q_i=\langle [x_{i1},\varphi ],\dots ,[x_{ir},\varphi]\rangle$. Note that $[x_{1j}\cdots x_{mj},\varphi]=[x_{1j},\varphi ]\cdots [x_{mj},\varphi]$ for every $j$,  and the cyclic subgroup generated by this product of elements from $m$ different Sylow subgroups of $G$ contains each of these elements. Then the $r$ elements $[x_{1j}\cdots x_{mj},\varphi]$, where $j=1,\dots ,r$, generate $G$.
\end{proof}

\begin{lemma}\label{l-5} Suppose that $A=\langle\varphi\rangle$ is cyclic,  $G=[G,A]$,  and $G$ is soluble. Then $G$ is generated by $r$-boundedly many elements of $I_G(\langle\varphi\rangle)$.
\end{lemma}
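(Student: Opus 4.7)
The plan is to induct on the Fitting height $h$ of $G$. Since the rank hypothesis, the coprimeness of $A$, and $G=[G,A]$ are all in force, Theorem~\ref{t-coprime} gives that $G$ has $r$-bounded rank, and then Lemma~\ref{l-rfit} yields that $h$ itself is $r$-bounded. Consequently any bound depending on $r$ and $h$ will automatically be $r$-bounded, and this is what makes the induction close.

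The base case $h=1$ is immediate: $G$ is nilpotent with $G=[G,\varphi]$, so Lemma~\ref{l-5-nilp} gives $r$ generators drawn from $I_G(\varphi)\subseteq I_G(\langle\varphi\rangle)$. For the inductive step I would take $N=F(G)$, which is $A$-invariant and normal in $G$, and aim to apply Lemma~\ref{l-552} to this $N$. The quotient $G/F$ is soluble of Fitting height $h-1$, still satisfies $G/F=[G/F,A]$, and inherits the rank hypothesis by Lemma~\ref{l-IGA}(c); the inductive hypothesis then produces an $r$-bounded number $s=s(r,h-1)$ of elements of $I_{G/F}(A)$ that generate $G/F$.

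To supply the second half of the hypothesis of Lemma~\ref{l-552}, I would focus on $[F,A]$ rather than $F$ itself. This subgroup is nilpotent (being contained in $F$), it is $A$-invariant, and by coprimeness of the $A$-action one has $[[F,A],A]=[F,A]$ (using $F=[F,A]C_F(A)$ from Lemma~\ref{l-20}(b)). The rank hypothesis is again inherited by Lemma~\ref{l-IGA}(c), so Lemma~\ref{l-5-nilp} applied to $[F,A]$ with the cyclic action of $\langle\varphi\rangle$ yields $r$ generators from $I_{[F,A]}(\varphi)\subseteq I_F(A)$. Lemma~\ref{l-552} with $N=F$ then gives that $G$ is generated by $s+r$ elements of $I_G(A)=I_G(\langle\varphi\rangle)$; since $h$ is $r$-bounded, so is $s+r$, completing the induction.

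The only subtle point is the choice of $N$: one is tempted to use $N=F$ and try to generate $F$ from $I_F(A)$, but Lemma~\ref{l-552} actually requires generators only for $[N,A]$, which is precisely what Lemma~\ref{l-5-nilp} delivers after the coprime identity $[[F,A],A]=[F,A]$ is invoked. So the main obstacle is not a technical hurdle but rather making the correct bookkeeping match: extracting generators of the nilpotent commutator piece $[F,A]$ from $I_F(A)$ (via the nilpotent lemma) while handing off $G/F$ to the inductive hypothesis.
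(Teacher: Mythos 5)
Your proof is correct and follows essentially the same route as the paper: induction on the Fitting height (which is $r$-bounded by Theorem~\ref{t-coprime} and Lemma~\ref{l-rfit}), the nilpotent case via Lemma~\ref{l-5-nilp}, and the inductive step via Lemma~\ref{l-552} applied with $N=F(G)$. Your explicit check that $[[F,A],A]=[F,A]$ by coprimeness, so that Lemma~\ref{l-5-nilp} applies to $[F,A]$, is a point the paper leaves implicit ("as shown above"), but the argument is the same.
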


\begin{proof} Since the rank of $G$ is $r$-bounded by Theorem~\ref{t-coprime}, the Fitting height $h(G)$ is $r$-bounded by Lemma~\ref{l-rfit}; therefore we can proceed by induction on $h(G)$.
First let $h(G)=1$, whence $G$ is nilpotent. Then
$G$ can be generated by $r$ elements from $I_G(\langle\varphi\rangle)$ by Lemma~\ref{l-5-nilp}. When $h(G)>1$, the quotient $G/F(G)$ is generated by $r$-boundedly many  elements of $I_{G/F(G)}(\langle\varphi\rangle)$ by the induction hypothesis, and $[F(G),\varphi ]$ is generated by $r$ elements of $I_{F(G)}(\langle\varphi\rangle)$ as shown above. The result follows by Lemma~\ref{l-552}.
\end{proof}

We denote by $x^A$ the orbit of an element $x$ of any $A$-invariant section under the action of~$A$; we call such orbits $A$-orbits for brevity.

\begin{lemma}\label{l-55} Suppose that $G=[G,A]$. Suppose that $N$ is an $A$-invariant normal subgroup such that the quotient $G/N$ is generated by the $A$-orbits of $s$ elements in $I_{G/N}(A)$ and $[N,A]$ is generated by the $A$-orbits of $t$ elements in $I_N(A)$. Then $G$ can be generated by the $A$-orbits of $s+t$ elements of $I_G(A)$.
\end{lemma}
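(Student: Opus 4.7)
The plan is to adapt the proof of Lemma~\ref{l-552} almost verbatim, replacing each single element by its full $A$-orbit. The shift to orbit generation costs essentially nothing, because the subgroup generated by a union of $A$-orbits is automatically $A$-invariant, and $A$-invariance of the intermediate subgroup is the only structural feature of $H$ that is used in the earlier proof.

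Concretely, I would choose elements $a_1,\dots,a_s\in I_G(A)$ whose $A$-orbits generate $G/N$ modulo $N$ (using Lemma~\ref{l-IGA}(a) to pull back representatives from $I_{G/N}(A)$), and elements $b_1,\dots,b_t\in I_N(A)$ whose $A$-orbits generate $[N,A]$. Set $H=\langle a_1^A,\dots,a_s^A\rangle$, which is $A$-invariant by construction, and put $W=\langle a_1^A,\dots,a_s^A,b_1^A,\dots,b_t^A\rangle$. Since the images of the $a_i^A$ generate $G/N$, we have $G=NH$, and the task is to show $G\leqslant W$.

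The key step is the same as in Lemma~\ref{l-552}. Because $[N,A]$ is normal in $N$, we have $[N,A]^G=[N,A]^{NH}=[N,A]^H$. Both $[N,A]$ (by hypothesis on the $b_j$'s) and $H$ are contained in $W$, so $[N,A]^H\leqslant W$. Passing to the quotient $G/[N,A]^H$, the image of $N$ is centralized by $A$, and Lemma~\ref{l-ker} then forces this image to be central in $[G,A]/[N,A]^H=G/[N,A]^H$. Consequently $[N,A]^H\cdot H$ is normal in $G$, and $A$ acts trivially on $G/([N,A]^H\cdot H)$: the image of $N$ is acted upon trivially because $[N,A]\leqslant [N,A]^H$, and the image of $H$ is absorbed into the normal subgroup we are quotienting by. Since $G=[G,A]$, this quotient must be trivial, so $G=[N,A]^H\cdot H\leqslant W$.

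There is no genuine obstacle here: the argument is a direct transcription of the one for Lemma~\ref{l-552}, and in fact becomes slightly more transparent because $H$ is $A$-invariant from the outset, so no additional care is required when identifying $[N,A]^G$ with $[N,A]^H$ or when controlling the $A$-action on $G/([N,A]^H\cdot H)$.
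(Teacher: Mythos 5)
Your proposal is correct and follows the paper's own proof essentially verbatim: the same choice of $H=\langle a_1^A,\dots,a_s^A\rangle$, the same identification $[N,A]^G=[N,A]^H$, the same use of Lemma~\ref{l-ker} to make the image of $H$ normal modulo $[N,A]^H$, and the same final appeal to $G=[G,A]$. No issues.
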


\begin{proof} Let $[N,A]$ be generated by the $A$-orbits of elements $b_1,\dots,b_t\in I_N(A)$, and let $G/N$ be generated by the $A$-orbits of elements $\bar a_1,\dots,\bar a_s\in I_{G/N}(A)$. In accordance with Lemma~\ref{l-IGA}(a) choose some pre-images $a_1,\dots, a_s\in I_G(A)$ of $\bar a_1,\dots,\bar a_s$. We claim that $G=\langle a_1^A,\dots,a_s^A,b_1^A,\dots,b_t^A\rangle$.
We set $H=\langle a_1^A,\dots,a_s^A\rangle$, so that $G=NH$. Since $[N,A]$ is normal in $N$, the normal closure $ [N,A]^G= [N,A]^H$ is contained in $\langle a_1^A,\dots,a_s^A,b_1^A,\dots,b_t^A\rangle$. Since the image of $N$ in the quotient $G/ [N,A]^H$ is centralized by $A$, it is central in $G/[N,A]^H$ by Lemma~\ref{l-ker}, and therefore the image of $H$ becomes normal. Thus, $[N,A]^H H$ is normal in $G$. Obviously, $A$ acts trivially on $G/ [N,A]^H H=NH/ [N,A]^H H$. Since $G=[G,A]$, we conclude that $G= [N,A]^H H$, and the result follows.
\end{proof}

Recall that, as a consequence of the classification,  if a simple group $G$ admits a coprime group of automorphisms $A$, then $A$ is cyclic
(see \cite{GLS3}).

\begin{lemma}\label{l-5554} Suppose that $G=[G,A]$ and $G$ is semisimple. Then $G$ can be generated by $r$-boundedly many $A$-orbits of elements of $I_G(A)$.
\end{lemma}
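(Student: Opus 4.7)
The plan is a two-stage reduction: first from a general semisimple $G$ to a single $A$-orbit of simple factors, and then from the transitive case to a generation problem inside a single simple group. By Theorem~\ref{t-coprime}, $G$ has $r$-bounded rank, so the number $n$ of simple factors in $G=S_1\times\cdots\times S_n$ is at most $r$ (each $S_i$ contains an independent involution). The group $A$ partitions these factors into $k\leqslant n$ orbits, giving an $A$-invariant direct product decomposition $G=H_1\times\cdots\times H_k$. Applying Lemma~\ref{l-55} iteratively (with $N=H_1$, then $N=H_2$ inside $G/H_1$, and so on), it suffices to handle each $H_j$ separately, so we may assume $A$ acts transitively on $m\leqslant n$ simple factors, all isomorphic to a single simple group $S$. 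By Lemma~\ref{l-autsem}, $|A/C_A(G)|$ is $r$-bounded, and replacing $A$ with its image in $\mathrm{Aut}(G)$ (which does not change $I_G(A)$), we may assume $|A|$, the index $m$, and the order of the kernel $K$ of the permutation action of $A$ on $\{S_1,\ldots,S_m\}$ are all $r$-bounded.

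To reduce to a single simple factor, choose coset representatives $a_1=1,a_2,\ldots,a_m$ of $K$ in $A$ with $S_1^{a_i}=S_i$ (up to reindexing). Since $K$ is normal in $A$ and acts on each $S_i$ via automorphisms conjugate to its action on $S_1$, the $A$-orbit of any $x\in I_{S_1}(K)\subseteq I_G(A)\cap S_1$ decomposes as $\bigsqcup_i(x^{a_i})^K\subseteq\bigcup_i S_i$, and one checks that $\langle x_1^A,\ldots,x_d^A\rangle$ projects onto $S_i$ as the image under $a_i$ of $\langle x_1^K,\ldots,x_d^K\rangle\leqslant S_1$. If this image equals $S_1$ for all $i$, then a Goursat-type analysis, combined with the $A$-invariance of the generated subgroup and bounded $|A|$ (which bounds the $A$-compatible diagonal subdirect products that could obstruct equality with $G$), yields $\langle x_1^A,\ldots,x_d^A\rangle=G$. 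We must verify $[S_1,K]=S_1$: since $[S_1,K]$ is a $K$-invariant normal subgroup of the simple group $S_1$, it is either trivial or equal to $S_1$; if trivial, then $K$ acts trivially on all $S_i$ (its actions being conjugate), hence on $G$, and we fall into the residual case where $A$ acts on $G=S^m$ by pure permutations. This residual case is handled by a separate combinatorial argument using $G=[G,A]$ together with Goursat's lemma on $A$-invariant subgroups of a direct power, which again requires only $r$-boundedly many $A$-orbits because $|A|$ and $m$ are $r$-bounded.

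The crucial simple-group input is: for a finite non-abelian simple group $S$ admitting a coprime automorphism group $K$ of $r$-bounded order with $[S,K]=S$, the subgroup $S$ is generated by two elements of $I_S(K)$. By the classification of finite simple groups, coprime subgroups of $\mathrm{Out}(S)$ are cyclic, so $K$ acts on $S$ via some $\tau\in\mathrm{Aut}(S)$ with $[S,\tau]=S$; then a spread-type result for simple groups (for example, Guralnick--Kantor's uniform spread one, or consequences of the Ore conjecture proved by Liebeck--O'Brien--Shalev--Tiep) yields $s_1,s_2\in S$ such that $\langle[s_1,\tau],[s_2,\tau]\rangle=S$. Transporting these through the coset representatives $a_i$ produces the desired $r$-boundedly many $A$-orbit generators of $G$. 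The main obstacle is precisely this classification-dependent simple-group generation step, which underpins both the reduction in the main case and the combinatorial analysis of the residual pure-permutation case.
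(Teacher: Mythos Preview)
Your reduction to the transitive case via Lemma~\ref{l-55} is fine, and bounding $|A|$ via Lemma~\ref{l-autsem} is reasonable, but the heart of the argument has a genuine gap. In the residual pure-permutation case ($K$ acting trivially on every $S_i$) you only assert that ``a separate combinatorial argument using $G=[G,A]$ together with Goursat's lemma'' suffices ``because $|A|$ and $m$ are $r$-bounded''. But boundedness of $|A|$ and $m$ does not by itself produce an $r$-bounded set of elements of $I_G(A)$ whose $A$-orbits generate~$G$: the simple factor $S$ can be arbitrarily large, and there is no general principle that a generating set of a group of bounded rank contains a bounded generating subset (e.g.\ $C_6=\langle a^2,a^3\rangle$ needs both elements although $d(C_6)=1$). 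You would still have to \emph{construct} specific elements of $I_G(A)$ and prove they work. Similarly, in your main case the step ``Goursat-type analysis, combined with $A$-invariance and bounded $|A|$, yields $\langle x_1^A,\ldots,x_d^A\rangle=G$'' is not justified: you only know the projections are surjective, and ruling out an $A$-invariant diagonal subdirect product requires an explicit argument, not just a cardinality bound.

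The paper bypasses both difficulties with a single uniform construction for $l\geqslant 2$, with no case split on~$K$. Pick generators $a,b$ of $S_1$ and any $\alpha\in A$ with $S_1^\alpha=S_2$; set $x_1=a^{-1}a^\alpha$, $x_2=b^{-1}b^\alpha$, $x_3=ab\,((ab)^{-1})^\alpha$, all in $I_G(A)$. The key identity is $x_1x_2x_3=[a,b]\in S_1\cap\langle x_1^A,x_2^A,x_3^A\rangle$. Since the projection of $K_0:=\langle x_1^A,x_2^A,x_3^A\rangle$ onto $S_1$ already contains $a^{-1},b^{-1}$ and hence equals $S_1$, the conjugacy class $[a,b]^{K_0}=[a,b]^{S_1}$ lies in $K_0$ and generates $S_1$; transitivity of $A$ then gives $K_0=G$. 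Thus three $A$-orbits suffice when $l\geqslant 2$, with no reliance on spread-type theorems or the Ore conjecture. Only the case $l=1$ needs a simple-group input, and there the paper uses \cite[Lemma~2.9]{agks} (two nilpotent subgroups $P_i=[P_i,A]$ generating $G$) together with Lemma~\ref{l-5-nilp}, giving at most $2r$ elements of $I_G(A)$.
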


\begin{proof} Let $G=S_1\times\dots\times S_l$ where the factors $S_i$ are simple. The group of automorphisms $A$ permutes the simple factors and the proof of \cite[Lemma~3.9]{agks} shows that there are at most $r$ orbits under this action. Therefore we can assume without loss of generality that $A$ transitively permutes the factors $S_i$. If $G$ is simple, then $A$ is cyclic and \cite[Lemma~2.9]{agks} tells us that $G$ is generated by two nilpotent subgroups $P_1$ and $P_2$ such that $[P_1,A]=P_1$ and $[P_2,A]=P_2$. Each of the subgroups $P_i$ is generated by at most $r$ elements from $I_G(A)$ by Lemma~\ref{l-5-nilp}, whence $G$ is generated by at most $2r$ such elements. Therefore we can assume that $l\geqslant 2$.

We use the fact each non-abelian simple group can be generated by two elements. Let $a,b$ generate $S_1$. Choose $\alpha\in A$ such that $S_1^\alpha=S_2$. Consider the elements $x_1=a^{-1}a^\alpha$, $x_2=b^{-1}b^\alpha$, and $x_3=ab((ab)^{-1})^\alpha$, which belong to $I_G(A)$. Let $K=\langle x_1^A,x_2^A, x_3^A\rangle$, which is an $A$-invariant subgroup.
We observe that $1\neq x_1x_2x_3=[a,b]\in S_1\cap K$. The projection of $K$ onto $S_1$ is the whole group $S_1$, since the projections of $x_1$ and $x_2$ onto $S_1$ are $a^{-1}$ and $b^{-1}$, which generate $S_1$. Hence the conjugacy class $[a,b]^K$ is equal to $[a,b]^{S_1}$ and therefore generates $S_1$. Thus, $S_1$ is contained in $K$. Since $K$ is $A$-invariant and $A$ transitively permutes the factors $S_i$, we must have $K=G$ and the result follows.
\end{proof}

\begin{lemma}\label{l-p-7}
Suppose that $A=\langle\varphi\rangle$ is cyclic,  $G=[G,A]$, and the group $G$ is soluble-by-semisimple-by-soluble. Then the group $G$ can be generated by $r$-boundedly many $A$-orbits of elements of $I_G(\langle\varphi\rangle )$.
\end{lemma}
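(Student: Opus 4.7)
The plan is to pick an $A$-invariant normal series of soluble-by-semisimple-by-soluble type and combine generating sets for the three factors via two successive applications of Lemma~\ref{l-55}. First, fix $A$-invariant normal subgroups $1\leqslant K\leqslant L\leqslant G$ with $K$ soluble, $L/K$ semisimple, and $G/L$ soluble; the canonical choice $K=S(G)$ and $L$ the full preimage of $F^*(G/K)$ works (both are characteristic, hence $A$-invariant; since $S(G/K)=1$ we have $F(G/K)=1$ and then $Z(E(G/K))\leqslant F(G/K)=1$, so $L/K=E(G/K)$ is semisimple; and $G/L$ is soluble by Schreier's conjecture).

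The top factor $G/L$ is soluble and satisfies $G/L=[G/L,A]$ since $G=[G,A]$, so Lemma~\ref{l-5} provides $r$-boundedly many elements of $I_{G/L}(\langle\varphi\rangle)$ that generate it; these also generate via their $A$-orbits. Next, set $M=[L,A]$. By coprime action $M=[M,A]$, and $M\cap K\trianglelefteq M$ is $A$-invariant and soluble. The quotient $M/(M\cap K)\cong MK/K=[L/K,A]$ is an $A$-invariant normal subgroup of the semisimple group $L/K$, hence a direct product of some of its simple factors, so it is semisimple and equals its own commutator with $A$; Lemma~\ref{l-5554} then yields $r$-boundedly many $A$-orbits of elements of $I_{M/(M\cap K)}(A)$ generating it. On the other hand, $[M\cap K,A]$ is soluble and equals $[[M\cap K,A],A]$, so Lemma~\ref{l-5} produces $r$-boundedly many elements of $I_{M\cap K}(\langle\varphi\rangle)$ whose $A$-orbits generate it. Applying Lemma~\ref{l-55} to $M$ with normal subgroup $M\cap K$ therefore gives $r$-boundedly many $A$-orbits in $I_M(A)\subseteq I_L(A)$ that generate $M=[L,A]$.

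Finally, a second application of Lemma~\ref{l-55}, this time to $G$ itself with the normal subgroup $L$, combines the orbit-generations of $G/L$ and of $[L,A]=M$ obtained above to furnish $r$-boundedly many $A$-orbits in $I_G(A)=I_G(\langle\varphi\rangle)$ that generate $G$. The only real bookkeeping obstacle is to verify at each stage that the relevant section is $A$-invariant and that the hypothesis ``$H=[H,A]$'' needed to invoke the auxiliary lemmas holds; in every instance this is immediate from the coprimality of the $A$-action. No new input beyond the previously established lemmas is required.
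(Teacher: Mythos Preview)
Your argument is correct and follows essentially the same route as the paper's: take the canonical $A$-invariant series with $K=S(G)$ and $L$ the preimage of $F^*(G/K)$, treat the soluble layers by Lemma~\ref{l-5} and the semisimple layer by Lemma~\ref{l-5554}, and glue with two applications of Lemma~\ref{l-55}. The only difference is the order of gluing---the paper first combines the top two layers inside $G/K$ and then adjoins $[K,A]$, whereas you first assemble $[L,A]$ from its two layers and then adjoin the top; both orders work equally well.

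One small slip in your parenthetical justification: the solubility of $G/L$ for this canonical $L$ is \emph{not} a consequence of Schreier's conjecture alone (the permutation part of $\operatorname{Out}$ of a semisimple group need not be soluble). It follows instead from the hypothesis that some soluble-by-semisimple-by-soluble series $K'\leqslant L'\leqslant G$ exists: then $K'\leqslant S(G)=K$, the semisimple image $L'K/K$ lies in $E(G/K)=L/K$, so $L'\leqslant L$ and $G/L$ is a quotient of the soluble group $G/L'$. This is exactly how the paper uses the hypothesis as well.
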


\begin{proof}
By hypothesis there are normal subgroups $S\leqslant T$ such that $S$ and $G/T$ are soluble, and $T/S$ is semisimple. By choosing $S$ to be the soluble radical, and then $T$ the full inverse image of the generalized Fitting subgroup of $G/S$, we can assume that $S$ and $T$ are $A$-invariant. The soluble quotient $G/T$ can be generated by $r$-boundedly many elements of $I_{G/T}(\langle\varphi\rangle)$ by Lemma~\ref{l-5}. Then in the quotient $G/S$ the semisimple subgroup $[T/S,\varphi]$ can be generated by $r$-boundedly many $A$-orbits of elements of $I_{T/S}(\langle\varphi\rangle )$ by Lemma~\ref{l-5554}. Hence the quotient $G/S$ can be generated by $r$-boundedly many $A$-orbits of elements of $I_{G/S}(\langle\varphi\rangle)$ by Lemma~\ref{l-55}. In turn, the soluble subgroup $[S,\varphi]$ can be generated by $r$-boundedly many  elements of $I_{S}(\langle\varphi\rangle )$ by Lemma~\ref{l-5}. Hence the group $G$ can be generated by $r$-boundedly many $A$-orbits of elements from  $I_{G}(\langle\varphi\rangle)$ by Lemma~\ref{l-55}.
\end{proof}

\begin{proposition}\label{p-gen2}
Suppose that $G$ is a finite group admitting a coprime automorphism $\varphi$ such that every subset of $I_G(\langle\varphi\rangle)$ generates a subgroup that can be generated by $r$ elements. Then $[G,\varphi]$ can be generated by $r$-boundedly many elements from $I_G(\langle\varphi\rangle)$.
\end{proposition}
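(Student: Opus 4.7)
The plan is to reduce to $G=[G,\varphi]$, bound the generalized Fitting height $h^*(G)$ in terms of $r$, and then proceed by induction on $h^*(G)$ using the structural lemmas of this section. Since by Lemma~\ref{l-IGA}(c) the hypothesis on $r$-generation of subsets of $I_G(\langle\varphi\rangle)$ passes to $A$-invariant subgroups, and $I_{[G,\varphi]}(\langle\varphi\rangle)=I_G(\langle\varphi\rangle)$, one can replace $G$ by $[G,\varphi]$ and assume $G=[G,\varphi]$. Theorem~\ref{t-coprime} then gives that $G$ has $r$-bounded rank, and Lemma~\ref{l-212} bounds $h^*(G)$ in terms of $r$.

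Next, I would induct on $h^*(G)$ using the quotient $G/F^*(G)$, which has $h^*(G/F^*(G))=h^*(G)-1$. For the inductive step, the induction hypothesis applied to $G/F^*(G)$ produces $r$-boundedly many elements of $I_{G/F^*(G)}(\langle\varphi\rangle)$ generating $G/F^*(G)$, and separately the base case applied to the soluble-by-semisimple group $F^*(G)=F(G)E(G)$ (treated as soluble-by-semisimple-by-trivial, fitting the hypothesis of Lemma~\ref{l-p-7}) should yield $r$-boundedly many elements of $I_{F^*(G)}(\langle\varphi\rangle)$ generating $[F^*(G),\varphi]$. Lemma~\ref{l-552} then combines these to give $r$-boundedly many elements of $I_G(\langle\varphi\rangle)$ generating $G$, completing the induction.

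The main obstacle is the base case $h^*(G)=1$, where the difficulty is converting $A$-orbit generators into single-element generators. Here $[F(G),\varphi]$ is generated by $r$-boundedly many elements of $I_{F(G)}(\langle\varphi\rangle)$ by Lemma~\ref{l-5-nilp}, while the semisimple quotient $G/F(G)$ is generated by $r$-boundedly many $A$-orbits of $I_{G/F(G)}(\langle\varphi\rangle)$ by Lemma~\ref{l-5554} (and Lemma~\ref{l-p-7} gives the same for the whole soluble-by-semisimple group). The plan for the orbit-to-element conversion is to exploit the $r$-bounded rank of $G$ (equivalently, the bounded number of simple factors of $G/F(G)$ and the bounded rank of each), together with the hypothesis that every subset of $I_G(\langle\varphi\rangle)$ generates an $r$-generator subgroup, in order to collapse the union of finitely many $A$-orbits into an $r$-bounded subset of $I_G(\langle\varphi\rangle)$ still generating the group.

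The technical heart of the argument is this collapse, which I expect to carry out by adapting the product trick from the proof of Lemma~\ref{l-5-nilp}: commutators $[x,\varphi]$ supported on distinct Sylow subgroups or on distinct direct simple factors of $G/F(G)$ can be consolidated into a single commutator $[\prod_i x_i,\varphi]=\prod_i [x_i,\varphi]$ that still lies in $I_G(\langle\varphi\rangle)$. Ensuring that this consolidation respects both the cyclic $\langle\varphi\rangle$-orbit structure on the simple factors and the semidirect decomposition $F^*(G)=F(G)E(G)$, and that the number of resulting elements remains $r$-bounded, is the delicate point where the $r$-generation hypothesis, the rank bound from Theorem~\ref{t-coprime}, and the coprimeness $(|G|,|\varphi|)=1$ are all needed together.
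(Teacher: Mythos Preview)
Your reduction to $G=[G,\varphi]$, the rank bound, and the identification of the orbit-to-element conversion as the crux are all correct. Your induction on $h^*(G)$ is a legitimate alternative to what the paper does (the paper avoids this induction by invoking \cite[Proposition~3.13]{agks} to bound directly the order of the automorphism induced by $\varphi$ on $G/F^*(G)$), and it does reduce the problem to the base case $G=F^*(G)$.

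The gap is in your proposed mechanism for collapsing orbits in that base case. The identity $[\prod_i x_i,\varphi]=\prod_i[x_i,\varphi]$ from Lemma~\ref{l-5-nilp} requires the $x_i$ to lie in pairwise commuting $\varphi$-\emph{invariant} direct factors. For the semisimple quotient $G/F(G)$ this fails precisely in the hard case, namely when $\varphi$ permutes the simple factors non-trivially (which is exactly the situation treated in Lemma~\ref{l-5554}). There the elements of a single $\langle\varphi\rangle$-orbit of a commutator are spread over simple factors that are not individually $\varphi$-invariant, and no consolidation into a single commutator with a power of $\varphi$ is available. Nothing in the $r$-generation hypothesis by itself shortens an a~priori long orbit to $r$-boundedly many elements.

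What actually does the job is much simpler, and you already have the ingredients. In your base case $G=F^*(G)$, the quotient $G/F(G)$ is semisimple of $r$-bounded rank, so Lemma~\ref{l-autsem} gives that the automorphism $\bar\varphi$ induced on $G/F(G)$ has $r$-bounded order. Every element of a $\langle\bar\varphi\rangle$-orbit of $[u,\varphi^i]$ has the form $[u,\varphi^i]^{\varphi^s}=[u^{\varphi^s},\varphi^i]\in I_{G/F(G)}(\langle\varphi\rangle)$, so $r$-boundedly many orbits become $r$-boundedly many elements simply by listing them. This is precisely how the paper converts the output of Lemma~\ref{l-p-7} into single-element generators before applying Lemma~\ref{l-552} and Lemma~\ref{l-5-nilp}; the product trick is neither needed nor applicable here.
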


\begin{proof}
We can assume that $G=[G,\varphi]$. By Theorem~\ref{t-coprime} the rank of $G$ is $r$-bounded. Hence the order of the automorphism induced by $\varphi$ on $F^*(G)/F(G)$ is $r$-bounded by Lemma~\ref{l-autsem}.  The order of the automorphism induced by $\varphi$ on $G/F^*(G)$ is $r$-bounded by \cite[Proposition~3.13]{agks}.
As a result, the order of the automorphism $\bar \varphi$  induced by $\varphi$ on $G/F(G)$ is also $r$-bounded. To lighten the notation, let $H=G/F(G)$.

Note that $H=[H,\varphi]$ since $G=[G,\varphi]$ by our assumption. Since the rank of $G$ is $r$-bounded,  the number of simple factors in $F^*(G)/F(G)$ is also $r$-bounded. This implies that $H$ has a subgroup $K$ of $r$-bounded index that is semisimple-by-soluble. Namely, $K$ can be chosen to be the kernel of the permutational action of $H$ on the set of simple factors in $F^*(G)/F(G)$. Then the quotient of $K$ by $F^*(G)/F(G)$ is soluble, because $F^*(G)/F(G)$ contains its centralizer in $H$ and the outer automorphism groups of simple groups are soluble, which fact (Schreier conjecture) follows from the classification (see \cite{GLS3}).  With this choice,
$K$ is a normal and $\varphi$-invariant semisimple-by-soluble subgroup of $r$-bounded index in $H$.

We claim that $H$ is generated by $r$-boundedly many elements from $I_{H}(\langle\varphi\rangle)$. Since $H=[H,\varphi ]$, the quotient $H/K$, being of $r$-bounded order, is of course generated by $r$-boundedly many images of elements from $I_{H}(\langle\varphi\rangle)$, which are elements of  $I_{H/K}(\langle\varphi\rangle)$.
By Lemma~\ref{l-p-7} the subgroup $K$ is generated by $r$-boundedly many orbits of elements from $I_{K}(\langle\varphi\rangle)$ under the action of $\langle\bar\varphi\rangle$. Every element of such an orbit  is also an element of $I_{K}(\langle\varphi\rangle)$, since $[u,\varphi^i]^{\varphi^s}=[u^{\varphi^s},\varphi^i]\in I_{K}(\langle\varphi\rangle)$ for any  $u$ in $K$. Since the order $|\langle\bar\varphi\rangle|$  is $r$-bounded, we obtain that the subgroup $K$ is generated by $r$-boundedly many elements from $I_{K}(\langle\varphi\rangle)$. Applying now  Lemma~\ref{l-552} to $H$ and its normal $\varphi$-invariant subgroup $K$ we obtain that $H$ is generated by $r$-boundedly many elements from $I_{H}(\langle\varphi\rangle)$.

Thus, $H=G/F(G)$ is generated by $r$-boundedly many elements from $I_{G/F(G)}(\langle\varphi\rangle)$.
The subgroup $[F(G),\varphi]$ is generated by $r$ elements from $I_{F(G)}(\langle\varphi\rangle)$ by Lemma~\ref{l-5-nilp}.  Applying Lemma~\ref{l-552} we obtain that $G$ is generated by $r$-boundedly many elements from $I_{G}(\langle\varphi\rangle)$.
\end{proof}

\section{$p$-Groups of automorphisms of $p$-soluble groups}

In this section we prove Theorem~A in the special case where $\pi=\{p\}$, that is, for a $p$-group of automorphisms (or $p$-subgroup) of a $p$-soluble group.

But first we prove a lemma about the most general situation, which will be applied both in this and next sections. This lemma shows that in the proofs about the rank of $[G,A]$ we can pass to a subgroup of $r$-bounded index.

\begin{lemma}\label{l-index2}
Let $G$ be a finite group admitting a group of automorphisms $A$ such that every subset of $I_G(A)$ generates a subgroup that can be generated by $r$ elements.
If $H$ is an $A$-invariant subgroup of $r$-bounded index $m=|G:H|$ such that the rank of $[H,A]$ is $r$-bounded, then the rank of $[G,A]$ is $r$-bounded.
\end{lemma}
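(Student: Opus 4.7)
The plan is to reduce, via a quotient, to a situation in which the image of $H$ is centralized by $A$, and then exploit the combination of Lemma \ref{l-ker} with Schur's theorem together with the hypothesis on $I_G(A)$ to bound the rank of the resulting commutator subgroup.

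Concretely, I introduce $L=\langle [H,A]^G\rangle$, the normal closure of $[H,A]$ in $G$. This subgroup is $A$-invariant because $A$ permutes the $G$-conjugates of the $A$-invariant subgroup $[H,A]$, and it is contained in $[G,A]$ since $[H,A]\leqslant [G,A]$ and $[G,A]$ is normal in $G$. The key observation is that $[H,A]$ is already normal in $H$, so its distinct $G$-conjugates are indexed by cosets of $H$, giving at most $m=|G:H|$ of them; hence $L$ is the product of at most $m$ subgroups each of the same rank as $[H,A]$, and the rank of $L$ is therefore $r$-bounded.

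Passing to $\bar G=G/L$, the image $\bar H=HL/L$ is centralized by $A$ (since $[H,A]\leqslant L$) and has $r$-bounded index in $\bar G$. Lemma \ref{l-ker} then gives that $[\bar G,A]$ centralizes $\bar H$, so $[\bar G,A]\cap\bar H$ is a central subgroup of $r$-bounded index in $[\bar G,A]$. By Schur's theorem the derived subgroup $[\bar G,A]'$ is finite of $r$-bounded order. For the abelianization $[\bar G,A]/[\bar G,A]'$, Lemma \ref{l-IGA}(c) gives that the hypothesis passes to $I_{\bar G}(A)$, so $[\bar G,A]=\langle I_{\bar G}(A)\rangle$ is $r$-generator and its abelianization is an $r$-generator abelian group of rank at most $r$. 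Combining, $[\bar G,A]$ has $r$-bounded rank; since $[G,A]/L=[\bar G,A]$, the rank of $[G,A]$ is at most the sum of the ranks of $L$ and $[\bar G,A]$, which is $r$-bounded.

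The only delicate step is verifying that the normal closure $L$ has $r$-bounded rank, and this relies crucially on the normality of $[H,A]$ in $H$, which cuts the number of distinct $G$-conjugates down to $|G:H|=m$; the remainder is a direct combination of Lemma \ref{l-ker}, Schur's theorem, and the inheritance Lemma \ref{l-IGA}(c), with the generator hypothesis serving precisely to control the abelian part of $[\bar G,A]$ which Schur cannot reach.
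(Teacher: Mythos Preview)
Your approach is essentially identical to the paper's: form the normal closure of $[H,A]$ in $G$, bound its rank using that $[H,A]\trianglelefteq H$ so there are at most $m$ conjugates, then in the quotient use Lemma~\ref{l-ker} and Schur's theorem to handle the derived subgroup, with the $r$-generator hypothesis controlling the abelianization.

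There is one small technical slip. Lemma~\ref{l-ker} requires its subgroup to be \emph{normal} in $G$, but you apply it to $\bar H=HL/L$, which need not be normal in $\bar G$ since $H$ was not assumed normal in $G$. The paper handles this at the very start by replacing $H$ with its normal core in $G$, which is still $A$-invariant and still has $r$-bounded index (at most $m!$). With that one-line adjustment your argument goes through verbatim; without it, the appeal to Lemma~\ref{l-ker} is not justified as stated.
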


\begin{proof}
We can assume that $H$ is normal in $G$ by passing to the normal core of $H$, which remains $A$-invariant. Since $[H,A]$ is normal in $H$, it has at most $m$ conjugates in $G$, so that the normal closure $M$ of $[H,A]$ in $G$ is a product of at most $m$ subgroups, each of which is normal in $H$ and has $r$-bounded rank. Hence $M$ has $r$-bounded rank and it is sufficient to prove that the rank of $[G/M,A]$ is $r$-bounded. Let $\bar G=G/M$ for brevity. Since $A$ acts trivially on $\bar H$, the subgroup $[\bar G,A ]$ centralizes $\bar H$ by Lemma~\ref{l-ker}. Hence  $[\bar G,A ]$ has a central subgroup $[\bar G,A ]\cap \bar H$ of $r$-bounded index. By Schur's theorem \cite[Theorem~4.12]{rob}, the derived subgroup $[\bar G,A ]'$ has $r$-bounded order. The abelian derived quotient of $[\bar G,A ]$ has rank at most $r$, since it is $r$-generated by hypothesis. Hence, the result follows.
\end{proof}

Recall  that, for a prime $p$, the largest normal $p$-subgroup of a group $G$ is denoted by $O_p(G)$, and the largest normal $p'$-subgroup by $O_{p'}(G)$. The full inverse image of $O_p(G/O_{p'}(G))$ in~$G$ is denoted by $O_{p',p}(G)$, then $O_{p',p,p'}(G)$ is the inverse image of $O_{p'}(G/O_{p',p}(G))$, and so on.
A group $G$ is said to be $p$-soluble, if this series terminates with $O_{p',p,p'\dots}(G)=G$; in this case, the number of symbols $p$ in the subscript is called the $p$-length of $G$. Recall that if $G$ is $p$-soluble, then $O_{p',p}(G)/O_{p'}(G)$ contains its centralizer in  $G/O_{p'}(G)$.

The following proposition makes use of the celebrated Theorem~B of P.~Hall and G.~Higman  \cite{ha-hi} about $p$-soluble linear groups in characteristic $p$.

\begin{proposition}\label{p-hh2}
Let $p$ be a prime and let $A$ be a $p$-group of automorphisms of a finite $p$-soluble group $G$  such that every subset of $I_G(A)$ generates a subgroup that can be generated by $r$ elements. Let $F^*$ be the generalized Fitting subgroup of $G/ O_{p',p}(G)$.
\begin{itemize}
 \item[\rm (a)] The exponent of the group of automorphisms induced by $A$ on $F^*$ is $r$-bounded; in particular, if $[F^*,A]\ne 1$, then $p$ is $r$-bounded.
 \item[\rm (b)] The order of $[F^*,A]$ is $r$-bounded.
\end{itemize}
\end{proposition}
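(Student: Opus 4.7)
The plan uses Theorem~\ref{t-coprime} for a rank bound, Hall--Higman's Theorem~B for an exponent bound, and a structural reduction to $\bar G = G/O_{p',p}(G)$. By Lemma~\ref{l-IGA}(c) the hypothesis on $I_G(A)$ descends to $I_{\bar G}(A)$. The key structural observation is that $\bar G$ is $p$-soluble with $O_p(\bar G)=1$: any normal $p$-subgroup of $\bar G$ pulls back to a $p$-extension of $O_{p',p}(G)$, hence lies in $O_{p',p}(G)$. Since non-abelian composition factors of a $p$-soluble group are $p'$, and the centres of the components of $E(\bar G)$ lie in the characteristic abelian subgroup $F(\bar G)=O_{p'}(\bar G)$, every component of $E(\bar G)$ is $p'$-quasisimple; hence $F^* = F^*(\bar G)$ is a $p'$-group and $A$ acts on it coprimely, so Theorem~\ref{t-coprime} gives that $[F^*,A]$ has $r$-bounded rank.

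For the exponent bound in part~(a), I would introduce the characteristic-$p$ module $V = O_p(\tilde G)/\Phi(O_p(\tilde G))$, where $\tilde G = G/O_{p'}(G)$. Using $O_{p'}(\tilde G)=1$, $C_{\tilde G}(O_p(\tilde G)) \leqslant O_p(\tilde G)$, coprime action of $p'$-elements on the Frattini quotient, and $O_p(\bar G)=1$, one verifies that $\bar G = \tilde G/O_p(\tilde G)$ acts faithfully on $V$ (the kernel is a normal $p$-subgroup of $\bar G$, hence trivial), so $\bar G \rtimes A$ is a $p$-soluble subgroup of $GL(V)$. For $\alpha \in A$ of order $p^n$ acting non-trivially on $V$ (non-exceptional case), Hall--Higman's Theorem~B forces the minimal polynomial of $\alpha$ on $V$ to have degree $\geqslant p^n - p^{n-1}$, producing an $\mathbb{F}_p$-subspace of $[V,\alpha]$ of dimension $\geqslant p^n - p^{n-1} - 1$; since the hypothesis descends to the $A$-invariant section $V$, this subspace is $r$-generator, forcing $p^n$ to be $r$-bounded. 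If $\alpha \in A$ centralises $V$, then via the faithful embedding $\bar G \hookrightarrow GL(V)$ the image of $\alpha$ is trivial, so $\alpha$ conjugates $\bar G$'s image trivially, and hence acts trivially on $\bar G$ and on $F^*$; therefore the exponent of $A$ on $F^*$ is bounded by the exponent on $V$, which is $r$-bounded. The ``in particular'' claim follows since a non-trivial $p$-power is divisible by~$p$, and the exceptional Hall--Higman configurations force $p$ itself to be small.

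For part~(b), each simple section of $F^*$ embeds (via the faithful $\bar G \hookrightarrow GL(V)$) into $GL(V)$ and acts faithfully on a subspace of $V$ whose $\mathbb{F}_p$-dimension is controlled by $r$, through the $r$-bounded rank of $[V,\alpha]$ for suitable $\alpha \in A$ together with the constraint that a faithful module for a section of order $m$ has dimension at least $\log_p m$ divided by a bounded constant (the order of $p$ modulo relevant primes). This bounds the order of each simple $p'$-section of $F^*$ by a function of~$r$; combined with the $r$-bounded rank of $[F^*,A]$ from Theorem~\ref{t-coprime} and Lemma~\ref{l-burn}, this yields $r$-bounded order of $[F^*,A]$.

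The main obstacle is exactly this translation between the characteristic-$p$ Hall--Higman bounds on $V$ and the coprime action on the $p'$-group $F^*$: these live on opposite primes, and the bridge is the faithful embedding $\bar G \hookrightarrow GL(V)$. Extracting sharp dimension bounds for each simple section of $F^*$, handling the nilpotent part $F(\bar G)$ and the semisimple layer $E(\bar G)/Z(E(\bar G))$ (where Lemma~\ref{l-autsem} provides auxiliary control), and treating the exceptional Hall--Higman configurations (Fermat and Mersenne primes, and the $SL_2$-sections they generate) will require case-by-case care.
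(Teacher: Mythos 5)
Your overall skeleton for part~(a) (Hall--Higman on the Frattini quotient $V$ of $O_{p',p}(G)/O_{p'}(G)$, plus the remark that an element of $A$ acting trivially on $V$ acts trivially on $\bar G=G/O_{p',p}(G)$, hence on $F^*$) is the right one, but the way you invoke Theorem~B has a genuine gap. You propose to bound the order of $\alpha$ \emph{on $V$} and then transfer the bound to $F^*$; however the exponent of $A$ on $V$ is not $r$-bounded at all: take $G$ elementary abelian of rank $r+1$ with $p$ arbitrarily large and $\alpha$ of order $p$ acting with a single Jordan block of size $2$ --- the hypothesis on $I_G(A)$ holds, yet the order of $\alpha$ on $V=G$ is the unbounded prime $p$ (here $F^*=1$, which is exactly why no contradiction arises). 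The reason Theorem~B does not give you such a bound on all of $V$ is that its hypothesis fails there: the image of $\bar G\rtimes A$ in $GL(V)$ can have a non-trivial normal $p$-subgroup (e.g.\ whenever a non-trivial part of $A$ centralizes $\bar G$ but not $V$). What is needed, and what the paper does, is to tie the application of Theorem~B to the action on the $p'$-layer: with $p^m$ the order of the automorphism induced by $\alpha$ on $F^*$, one passes to an irreducible composition factor $U$ of $V$ for the group $[F^*,\alpha^{p^{m-1}}]\langle\alpha\rangle$ on which $[F^*,\alpha^{p^{m-1}}]$ acts non-trivially; irreducibility in characteristic $p$ guarantees the induced linear group has no non-trivial normal $p$-subgroup, and since $\alpha^{p^m}$ centralizes $F^*$ it is central, hence acts trivially on $U$, so $\alpha$ has order exactly $p^m$ on $U$ and Theorem~B yields $p^m-p^{m-1}\leqslant r+1$. (Also note that the lower bound $p^m-p^{m-1}$ is valid in the exceptional configurations of Theorem~B as well, so no Fermat/Mersenne case analysis is required.)

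Part~(b) has a larger gap: the bridge you describe --- ``each simple section of $F^*$ acts faithfully on a subspace of $V$ whose dimension is controlled by $r$'' --- is precisely the missing content, not a consequence of what you have. The hypothesis gives $\dim[V,\alpha]\leqslant r$ and $\dim[V,A]\leqslant r$, but $[V,A]$ need not be $F^*$-invariant and $[F^*,A]$ need not act faithfully on it; the subspace on which $[F^*,A]$ does act faithfully (by coprimeness) is $[V,[F^*,A]]$, and bounding its dimension is the heart of the matter. The paper achieves this via Proposition~\ref{p-gen2} (each $[\psi,[F^*,A]]$, $\psi\in A$, is generated by $r$-boundedly many elements of $I_{[F^*,A]}(\langle\psi\rangle)$, which rests on the coprime machinery of \S3), a commutator computation as in Lemma~\ref{l-genhh2} giving $\dim[W,[\psi,T]]\leqslant 2f(r)\dim[W,\psi]$, and a filtration of $V$ along a central series of the $p$-group $A$ together with a linear-independence count showing $\sum_i\dim[V/U_{i-1},\psi_i]\leqslant r$ (Lemma~\ref{l-hh2l}); none of this is replaced by your dimension-versus-$\log_p$ counting. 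Moreover, your concluding appeal to Lemma~\ref{l-burn} requires an exponent bound for $[F^*,A]$ that you have not established: bounding the orders of the simple sections says nothing about the part of $[F^*,A]$ inside $F(\bar G)$, which could a priori be cyclic of huge order while having rank $1$. Once $\dim[V,[F^*,A]]$ and $p$ are both $r$-bounded, the order bound follows immediately from faithfulness of the coprime action of $[F^*,A]$ on $[V,[F^*,A]]$; without that dimension bound the argument does not close. (A small further slip: $F(\bar G)$ is a $p'$-group because $O_p(\bar G)=1$, but it need not be abelian.)
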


\begin{proof}
It is a routine exercise to check that $F^*$ is a $p'$-group faithfully acting on the Frattini quotient $V$ of the $p$-group $O_{p',p}(G)/O_{p'}(G)$.

(a) We need to prove that for any $\varphi\in A$ the order $p^m$ of the automorphism of $F^*$ induced by $\varphi$ is $r$-bounded. We can obviously assume that $p^m\ne 1$. Consider the action of the semidirect product $[F^*, \varphi^{p^{m-1}}]\langle\varphi\rangle$ on $V$ regarded as an $\mathbb{F}_p[F^*, \varphi^{p^{m-1}}]\langle\varphi\rangle$-module. Let $U$ be an irreducible factor of a composition series of $V$ on which $[F^*, \varphi^{p^{m-1}}]$ acts non-trivially; such $U$ exists due to the coprimeness of the action of $[F^*, \varphi^{p^{m-1}}]$ on $V$. Since $\varphi^{p^m}\in Z\big([F^*, \varphi^{p^{m-1}}]\langle\varphi\rangle\big)$ and $U$ is irreducible in characteristic $p$, in fact $\varphi^{p^m}$ acts trivially on $U$. Therefore the $p$-soluble group of linear transformations induced by $[F^*, \varphi^{p^{m-1}}]\langle\varphi\rangle$ on $U$ has no non-trivial normal $p$-subgroups and thus satisfies the hypotheses of the Hall--Higman Theorem~B \cite[Theorem~B]{ha-hi}. By this theorem the minimum polynomial of $\varphi$ as a linear transformation of $U$ has degree at least $p^m-p^{m-1}$. This degree is known to be equal to the (maximum) dimension of the span of an orbit of a vector $v\in V$ under the action of $\langle \varphi\rangle$ (see, for example, \cite[Lemma~2.6(d)]{khu-moe}). This span is generated by $v$, $[v,\varphi ]$, $[[v,\varphi] ,\varphi ]$, \dots . All these elements, possibly excepting~$v$, are images of elements of $I_G(\langle\varphi\rangle )$, and therefore the dimension of the span is at most $r+1$. Thus, $p^m-p^{m-1}\leqslant r+1$. Hence $p^m$ is $r$-bounded, and, in particular, $p$ is also $r$-bounded.

(b) We now prove that $|[F^*,A ]|$ is $r$-bounded. If $[F^*,A ]=1$, there is nothing to prove. So we assume that $[F^*,A ]\ne 1$; then by part~(a) the exponent $p^m\ne 1$ of the group of automorphisms induced by $A$ on $F^*$ is $r$-bounded, and, in particular, the prime $p$ is also $r$-bounded.

Consider the action of $F^*A$ on the Frattini quotient $V$ of $O_{p',p}(G)/O_{p'}(G)$, which is faithful for $F^*$.  Since the prime $p$ is $r$-bounded,
\begin{equation}\label{e-willfollow}
\text{the result will  follow if we prove that the rank of }[V,[F^*,A]]\text{ is } r\text{-bounded,}
\end{equation}
since   then the order of $[V,[F^*,A]]$ will also be $r$-bounded, and  $[F^*,A]$ acts faithfully on  $[V,[F^*,A]]$ due to coprime action.

 To lighten the notation, let $T=[F^*,A]$, so that $T=[T,A]$ is a $p'$-group. By Proposition~\ref{p-gen2}, for any $\psi\in A$  the subgroup $[\psi, T]$ is generated by $r$-boundedly many  elements from $I_{T}(\langle\psi\rangle )$, say, $[\psi^{k_1}, g_1],\dots ,[\psi^{k_{f(r)}}, g_{f(r)}]$ for some $r$-bounded number $f(r)$, which we assume to be the same for all $\psi\in A$ for convenience.
Since $V$ can  be regarded as a vector space over $\mathbb{F}_p$, the rank of its subgroups is also the dimension as $\mathbb{F}_p$-subspaces, so we can use the notation ``$\dim$'' for this rank and additive notation for products in $V$.

The proof of the proposition will essentially follow from the next two lemmas.

 \begin{lemma}\label{l-genhh2}
 Suppose that $W$ is a $TA$-invariant section of $V$, and $\psi $ an element of $A$.
 Then
 $$
 \dim [W,[\psi, T]]\leqslant 2f(r)\dim [W,\psi ].
 $$
 \end{lemma}

 \begin{proof}
 For any $x\in W$, $g\in T$, and $\sigma\in \langle\psi\rangle$ we have
 $$
 [[\sigma, g],x]=[\sigma^{-1}\sigma^{g},x]=[\sigma ^{-1},x]^{\sigma^{g}}+ [\sigma^{g},x]=[\sigma^{-1},x]^{\sigma^{g}}+ [\sigma,x^{g^{-1}}]^{g}.
 $$
 Since $x$ is an arbitrary element of $W$, we obtain $[W,[\sigma, g]]\leqslant [W,\sigma]^{\sigma^{g}}+ [W,\sigma]^{g}$.
 The right-hand side is a sum of two subgroups of the abelian group $W$ each having rank at most  $\dim [W,\psi]$, so this sum has rank at most $2\dim [W,\psi]$.  Hence,
 \begin{equation}\label{e-sigm}
  \dim [W,[\sigma, g]]\leqslant  2\dim [W,\psi].
 \end{equation}
 Let  $[\psi, T]=\langle [\psi^{k_1}, g_1],\dots ,[\psi^{k_{f(r)}}, g_{f(r)}]\rangle$.
 Then $$
[W,[\psi, T]]=\sum _{i=1}^{f(r)} [W,[\psi^{k_i}, g_i]],
 $$
 whence $[W,[\psi, T]]$ has rank at most $2f(r)\dim [W,\psi]$ by \eqref{e-sigm}.
 \end{proof}

We now prove the key lemma.

\begin{lemma}\label{l-hh2l}
The rank of $[V,T]$ is $r$-bounded.
\end{lemma}

\begin{proof}
Using the fact that $A$ is a $p$-group, we choose in it a central series with cyclic factors
$$
1=A_0<A_1<\cdots<A_n=A,
$$
and let $\psi_i\in A_i$ be some pre-images of generators of the cyclic groups $A_i/A_{i-1}$.
We now apply Lemma~\ref{l-genhh2} with $\psi$ being consecutively $\psi_{1},\dots ,\psi_{n}$ and with $W$ being the sections $V/U_i$  for the $TA$-invariant subspaces $U_i=[V,[A_i,T]]$. We see, for example, that $U_0=0$ and $U_1=[V,[\psi_{1}, T]]$. Note that $\dim U_1/U_0=\dim U_1\leqslant 2f(r)\dim [V/U_0,\psi _1]$ by Lemma~\ref{l-genhh2}, and $U_1$ is $TA$-invariant since $\psi_1\in Z(A)$. In general, for any $k$, we see that
\begin{equation}\label{e-sum}
  U_{k}=\sum_{i=1}^{k}[V ,[\psi_{i},T]].
\end{equation}
Indeed, in obvious induction, if $U_s=\sum_{i=1}^{s}[V,[\psi_{i},T]]$ by the induction hypothesis,  then  $U_{s+1}=\sum_{i=1}^{s+1}[V,[\psi_{i},T]]$, since $U_{s+1}=[V,[A_{s+1},T]]=[V,[A_s,T]]+[V,[\psi_{s+1},T]$ because  $A_{s+1}=A_s\cdot \langle\psi_{s+1}\rangle$.

By Lemma~\ref{l-genhh2} we have
\begin{equation}\label{e-dimhh1}
\dim (U_{k}/U_{k-1})=\dim ([V/U_{k-1},[\psi_{k},T]])\leqslant 2f(r)\dim [V/U_{k-1},\psi _{k}]
\end{equation}
for every $k=1,\dots ,n$. By \eqref{e-sum},
$$
U_n=\sum_{i=1}^{n}[V,[\psi_{i},T]]=[V,[A,T]]=[V,T],
$$
where the middle equation holds because $[A,T]=\prod_{i=1}^{n}[\psi_{i},T]$,  since $A=\langle\psi_1,\dots,\psi_n\rangle$.
 As a result, in view of \eqref{e-dimhh1}, we have
\begin{equation}\label{e-dimhh2}
 \dim [V,T]=\sum_{i=1}^{n}\dim U_i/U_{i-1}\leqslant 2f(r) \sum_{i=1}^{n}\dim [V/U_{i-1},\psi _{i}].
\end{equation}

We now estimate the right-hand side of \eqref{e-dimhh2}. Namely, we claim that
\begin{equation}\label{e-dimhh3}
 \sum_{i=1}^{n}\dim [V/U_{i-1},\psi _{i}]\leqslant \dim [V,A]\leqslant r.
  \end{equation}
  Indeed, first note that $[V/U_{i-1},\psi _{i}]\leqslant U_i/U_{i-1}$. For each $i$ by
  Lemma~\ref{l-IGA}(a) we can choose a basis of $[V/U_{i-1},\psi _{i}]$ to consist of the images of elements $b_{i1},\dots ,b_{ik_i}$ from $I_V(\langle \psi_i\rangle)\subseteq I_V(A)$.
Then all these elements $b_{ij}$, $i=1,\dots , n$, $j=1,\dots ,k_i$, together are linearly independent. Indeed, if $\sum_{i,j}\alpha_{ij}b_{ij}=0$, then, firstly, $\alpha_{n1}=\cdots =\alpha_{nk_n}=0$ because $b_{n1},\dots,b_{nk_n}$ are linearly independent modulo $U_{n-1}$, while all the other $b_{ij}$ for $i\leqslant n-1$ are contained in $U_{n-1}$. Then $\alpha_{n-1,1}=\cdots =\alpha_{n-1,k_{n-1}}=0$ because $b_{n-1,1},\dots,b_{n-1,k_{n-1}}$ are linearly independent modulo $U_{n-2}$, while all the other $b_{ij}$ for $i\leqslant n-2$ are contained in $U_{n-2}$. Proceeding in obvious induction we obtain that $\alpha_{ij}=0$ for all $i,j$. Since these $b_{ij}$ are linearly independent elements from $I_V(A)$, their number, which is exactly $\sum_{i=1}^{n}\dim [V/U_{i-1},\psi _{i}]$ is at most $\dim [V,A]\leqslant r$.

Combining \eqref{e-dimhh2} and \eqref{e-dimhh3}, we obtain $[V,T]\leqslant 2rf(r)$, so that the rank of $[V,T]$ is $r$-bounded.
\end{proof}

We now finish the proof of part (b) of the proposition. In accordance with \eqref{e-willfollow}, since both $p$ and $\dim [V,T]=\dim [V,[F^*,A]]$ are $r$-bounded,   the order of $[V,[F^*,A]]$ is $r$-bounded. Then the order of  $[F^*,A]$ is also $r$-bounded as this group acts faithfully on  $[V,[F^*,A]]$ due to coprime action.
 \end{proof}

The next lemma will enable induction on the $p$-length.

\begin{lemma}\label{l-plength2}
Let $G$ be a finite $p$-soluble group admitting a $p$-group of automorphisms $A $ such that every subset of $I_G(A)$ generates a subgroup that can be generated by $r$ elements. Then the $p$-length of $[G,A]$ is $r$-bounded.
\end{lemma}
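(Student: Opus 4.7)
The plan is to reduce to $G=[G,A]$ and then proceed by induction on $|G|$, using Proposition~\ref{p-hh2} to control the top generalized Fitting quotient and then pass to a strictly smaller $A$-invariant quotient.

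Set $L=O_{p',p}(G)$, $F^*=F^*(G/L)$, and $X=[F^*,A]$. Proposition~\ref{p-hh2}(b) gives that $|X|$ is $r$-bounded. If $X=1$, then $A$ centralizes $F^*$; by Lemma~\ref{l-ker} and the fact that $F^*$ is self-centralizing in $G/L$ (since $O_p(G/L)=1$ in the $p$-soluble group $G/L$), we deduce $[G/L,A]\leq F^*$. Hence $[G,A]$ is contained in the preimage $J$ of $F^*$ in $G$, a subgroup whose $p$-length is at most $1$ (as $J/L$ is $p'$ and $L$ has $p$-length $1$). This handles the ``clean'' case.

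In the general case $X\neq 1$, Proposition~\ref{p-hh2}(a) tells us that the prime $p$ itself is $r$-bounded. Let $Y$ be the preimage of $X$ in $G$. Then $Y$ contains $L$ and $Y/L=X$ is a $p'$-group of $r$-bounded order, so $Y$ has $p$-length at most $1$. Passing to $\bar G=G/Y$, we have $|\bar G|<|G|$ and the hypothesis is inherited by $\bar G$ via Lemma~\ref{l-IGA}(c); moreover $[\bar G, A]=\bar G$ since $G=[G,A]$. Applying the inductive hypothesis to $\bar G$ bounds the $p$-length of $[\bar G,A]$, and then the $p$-length of $[G,A]$ exceeds that of $[\bar G,A]$ by at most the $p$-length of $Y$.

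The main obstacle is that a naive induction on $|G|$ gives a bound that grows with the recursion depth rather than being truly $r$-bounded. I expect the resolution to come from combining the inductive step with a uniform control across all iterations: since the hypothesis is preserved in every successive quotient and $p$ is $r$-bounded once we leave the clean case, one can apply Proposition~\ref{p-hh2}(a) iteratively to show that the exponent of $A$'s induced action on each generalized Fitting factor arising along the upper $p$-series of $G$ is uniformly $r$-bounded. Together with the boundedness of $p$, this allows one to invoke a classical Hall--Higman-type theorem (bounding the $p$-length of a $p$-soluble group in terms of the exponent of an acting $p$-element or of a Sylow $p$-subgroup) to obtain an $r$-bounded estimate on $p$-length of $[G,A]$ independent of the iteration depth.
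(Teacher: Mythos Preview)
Your argument has a genuine gap that you yourself flag: the induction on $|G|$ yields a bound growing with the recursion depth, not a uniform $r$-bound. The resolution you sketch in the final paragraph does not work. Proposition~\ref{p-hh2}(a) bounds the exponent of the action of $A$ on each successive $F^*$-factor, but there is no ``Hall--Higman-type theorem'' that converts this into a bound on the $p$-length of $[G,A]$. The classical Hall--Higman results bound the $p$-length of a $p$-soluble group in terms of invariants of its \emph{own} Sylow $p$-subgroup (exponent, nilpotency class, or derived length), and you have established no bound on any such invariant for $[G,A]$. Knowing only that $p$ is $r$-bounded is of no help, since for a fixed prime $p$ the $p$-length of a $p$-soluble group can still be arbitrarily large. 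There is also a secondary problem: your subgroup $Y$ need not be normal in $G$, since $[F^*,A]$ is normal in $F^*A$ but not obviously in the larger group $(G/L)A$, and passing to a normal closure destroys the order bound from Proposition~\ref{p-hh2}(b).

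The paper's argument is entirely different and avoids Proposition~\ref{p-hh2} altogether. Take an $A$-invariant Sylow $p$-subgroup $P$ of $G$ (Lemma~\ref{l-20p}); the nilpotent case, Theorem~\ref{t-nilp}, shows that $[P,A]$ has $r$-bounded rank. Since $[P,A]$ is normal in $P$, a lemma cited from \cite{agks} gives that the normal closure $N=\langle[P,A]^G\rangle$ has $r$-bounded $p$-length. In $G/N$ the group $A$ acts trivially on $PN/N$ and hence on $O_{p',p}(G/N)/O_{p'}(G/N)$; by Lemma~\ref{l-ker} so does $[G,A]$, and the self-centralizing property of that section in the $p$-soluble quotient forces $[G,A]N/N$ to have $p$-length at most~$1$. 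The key ingredient you are missing is precisely the rank bound for $[P,A]$ coming from Theorem~\ref{t-nilp}.
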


\begin{proof}
 Let $P$ be an $A$-invariant Sylow $p$-subgroup of $G$. The rank of $[P,A]$ is $r$-bounded by Theorem~\ref{t-nilp}. Since this is a normal subgroup of $P$, its normal closure $N=\langle [P,A]^G\rangle$ in $G$ has $r$-bounded $p$-length by \cite[Lemma~2.7]{agks}. The group of automorphisms $A$ acts trivially on $PN/N$ and therefore trivially on $O_{p',p}(G/N)/ O_{p'}(G/N)$. By Lemma~\ref{l-ker}, then $[G,A]$ also acts trivially on $O_{p',p}(G/N)/ O_{p'}(G/N)$. Since $O_{p',p}(G/N)/ O_{p'}(G/N)$ contains its centralizer in the $p$-soluble group $(G/N)/ O_{p'}(G/N)$, we obtain that the image of $[G,A]N/N$ in $(G/N)/ O_{p'}(G/N)$ is contained in $O_{p',p}(G/N)/ O_{p'}(G/N)$ and therefore $[G,A]N/N$ has $p$-length~1. Since the $p$-length of $N$ is $r$-bounded,  hence the result.
\end{proof}

We are now ready to prove the main result of this section.

\begin{theorem}\label{t-psol}
Suppose that $A$ is a $p$-group of automorphisms of a finite $p$-soluble group $G$ such that any subset of $I_G(A)$ generates a subgroup that can be generated by $r$ elements. Then $[G,A]$ has $r$-bounded rank.
\end{theorem}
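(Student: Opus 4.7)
Plan. I would proceed by induction on the $p$-length $\ell$ of $[G,A]$, which is $r$-bounded by Lemma~\ref{l-plength2}. For the base case $\ell = 0$, the group $[G,A]$ is a $p'$-group on which the $p$-group $A$ acts coprimely; the hypothesis inherits via Lemma~\ref{l-IGA}(c), and Theorem~\ref{t-coprime} yields an $r$-bounded rank for $[[G,A],A]$. The quotient $T := [G,A]/[[G,A],A]$ is centralized by $A$ and normal in $G/[[G,A],A]$, so Lemma~\ref{l-ker} gives that $T = [G/[[G,A],A],A]$ centralizes $T$, making $T$ abelian. Since $T$ is generated by the images of $I_G(A)$ and by hypothesis any subset of $I_G(A)$ generates an $r$-generated subgroup, $T$ is $r$-generated; being abelian, its rank is at most $r$. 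The rank-of-extension bound then settles the base case.

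For the inductive step with $\ell \geq 1$, set $M = O_{p',p}(G)$. The hypothesis inherits to $\bar G = G/M$ by Lemma~\ref{l-IGA}(c), and a short characteristic-subgroup check gives $O_{p',p}([G,A]) \leq M$: $O_{p'}([G,A])$ is a normal $p'$-subgroup of $G$ and hence sits inside $O_{p'}(G)$, while the image of $O_{p',p}([G,A])$ in $G/O_{p'}(G)$ is a normal $p$-subgroup and therefore lies in $M/O_{p'}(G)$. Consequently the $p$-length of $[\bar G, A] \cong [G,A]/([G,A] \cap M)$ is at most $\ell - 1$, and by the inductive hypothesis this quotient has $r$-bounded rank. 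It remains to bound the rank of $N := [G,A] \cap M$.

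For this last step I would combine Proposition~\ref{p-hh2}, which bounds $|[F^*(G/M), A]|$, with Lemma~\ref{l-index2}: passing to an $A$-invariant subgroup of $r$-bounded index in $G$ in which the $A$-action on $F^*(G/M)$ is forced to be nearly trivial, one can control the two-step series $1 \leq N \cap O_{p'}(G) \leq N$ (with $p'$-bottom acted on coprimely and $p$-group top) via Theorem~\ref{t-coprime} and Theorem~\ref{t-nilp} respectively. The base-case abelian-quotient trick, invoked inside each layer of $N$ via Lemma~\ref{l-ker}, then lifts these commutator-rank bounds to rank bounds on the layers themselves, and hence on $N$.

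The principal obstacle is this last lift: Theorems~\ref{t-coprime} and~\ref{t-nilp} only control the $A$-commutators of the layers of $N$, while the $A$-centralizer complements could a priori be unbounded. Proposition~\ref{p-hh2}, through its Hall--Higman-based bound on the $A$-action on the generalized Fitting subgroup $F^*(G/O_{p',p}(G))$, is the decisive tool that rules out the non-coprime pathologies and allows the abelian-quotient argument to propagate cleanly through the layers of $N$.
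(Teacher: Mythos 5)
Your induction set-up (Lemma~\ref{l-plength2}, base case via Theorem~\ref{t-coprime}, reduction of the inductive step to bounding the rank of $[G,A]\cap O_{p',p}(G)$) matches the paper, and your base case, though phrased differently (you bound the abelian quotient $[G,A]/[[G,A],A]$ rather than showing, as the paper does, that $[G,A]=[[G,A],A]$ when $[G,A]$ is a $p'$-group), is correct. The gap is in the last step, and it is exactly the step you flag as ``the principal obstacle'': the mechanism you propose for it does not work. Theorem~\ref{t-coprime} and Theorem~\ref{t-nilp} bound the ranks of $[N\cap O_{p'}(G),A]$ and of the commutator of $A$ with the $p$-layer, but the layers themselves also contain $A$-fixed points, and the hypothesis on $I_G(A)$ says nothing about subgroups of $C_G(A)$; Lemma~\ref{l-ker} only tells you that such fixed-point pieces become \emph{central} in the relevant commutator subgroup, and central subgroups can have arbitrarily large rank. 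So ``the abelian-quotient trick lifts commutator-rank bounds to rank bounds on the layers'' is false as stated.

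What is actually needed, and what the paper does, is considerably more. First, one must upgrade the inductive bound on the \emph{rank} of the image of $[G,A]$ in $\bar G=G/O_{p',p}(G)$ to a bound on its \emph{order} (Lemma~\ref{l-order22}); this uses Proposition~\ref{p-hh2}(b) together with Schur's theorem and a fixed-point-free-action argument on the Hall $p'$-subgroup of the abelianization, and is not a formal consequence of anything you cite. Second, one shows that $A$ acts on $\bar G$ through a quotient of $r$-bounded order (Lemma~\ref{l-order-a}); only the combination of these two facts produces an $A$-invariant subgroup $C$ of $r$-bounded index with $[C,A]\leqslant O_{p',p}(C)$, to which Lemma~\ref{l-index2} applies. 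Third, even inside $C$ the two layers require specific work: the $p$-layer is handled by expanding $[[a,c],x]$ and using the Burnside Basis Theorem to write $[\bar C,A]$ on $r$ commutator generators (Lemma~\ref{l-mainc22}), and the $p'$-layer requires a Frattini argument with $N_C(P)$ (Lemmas~\ref{l-normal22} and~\ref{l-n-gen22}) precisely because $[O_{p'}(C),A]$ is not normal in $C$ and cannot simply be factored out. Only after these reductions does the Schur/Lubotzky--Mann endgame you allude to close the argument. As it stands your proposal identifies the right ingredients but leaves the decisive portion of the proof unproved.
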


\begin{proof}
 By Lemma~\ref{l-plength2} the $p$-length of $[G,A]$ is $r$-bounded. Hence we can proceed by induction on this $p$-length. The base of induction is $p$-length 0, which means that $[G,A]$ is a $p'$-group. We claim that then $[G,A]=[[G,A],A]$. Indeed, consider one of generators of $[G,A]$, an element $[g,\varphi]$ for $g\in G$ and $\varphi\in A$, say, of order $|\varphi|=p^k\ne 1$. We have
 $$
 [g,\varphi][g,\varphi]^\varphi [g,\varphi]^{\varphi^2}\cdots [g,\varphi]^{\varphi^{p^k-1}}=1.
 $$
 Since $\varphi$ acts trivially on $[G,A]/[[G,A],A]$, this equation means that $[g,\varphi]^{p^k}\in [[G,A],A]$. Since $[g,\varphi]$ is in fact a $p'$-element, we obtain $[g,\varphi]\in [[G,A],A]$. This holds for every generator, so  $[G,A]= [[G,A],A]$ if $[G,A]$ is a $p'$-group. Then the rank of $[G,A]$ is $r$-bounded by Theorem~\ref{t-coprime}.

 We now assume that the $p$-length of $[G,A]$ is at least 1. Since $[G,A ]$ is a normal subgroup of $G$, we have $O_{p',p}(G)\cap [G,A]=O_{p',p}([G,A])$. Therefore the $p$-length of the image of $[ G,A]$ in $G/O_{p',p}(G)$ is less than the $p$-length of $[G,A]$. By the induction hypothesis, the rank of this image is $r$-bounded. We claim that, moreover, the order of the image of $[G,A ]$ in $G/O_{p',p}(G)$ is $r$-bounded.

 \begin{lemma}\label{l-order22}
 The order of the image of $[G,A ]$ in $G/O_{p',p}(G)$ is $r$-bounded.
 \end{lemma}

\begin{proof}
Let $\bar G=G/ O_{p',p}(G)$. Let $F^*$ be the generalized Fitting subgroup of $ [\bar G,A]$,
which is contained in the generalized Fitting subgroup of $ \bar G$, since $[ \bar G,A]$ is normal in $ \bar G$ (see \cite[Corollary~13.11(c)]{hb3}). Recall that the generalized Fitting subgroup of $\bar G$ is a $p'$-group; hence  $F^*$ is a $p'$-group. It follows from Proposition~\ref{p-hh2}(b) that the order of $[F^*,A]$ is $r$-bounded. Hence $C_{F^*}(A)$ has $r$-bounded index in $F^*$, as $F^*=[F^*,A]C_{F^*}(A)$ due to coprime action. Let $N$ be a normal subgroup of $F^*$ contained in $C_{F^*}(A)$ and having $r$-bounded index in~$F^*$. Then $(F^*)^n\leqslant N\leqslant C_{F^*}(A)$ for some $r$-bounded number $n$. Since the rank of $F^*$ is $r$-bounded by the induction hypothesis (as $F^*\leqslant [ \bar G,A]$), the quotient $F^*/(F^*)^n$ has $r$-bounded order by Lemma~\ref{l-burn}. The subgroup $(F^*)^n$ is normal in $ \bar G$ as a characteristic subgroup of a characteristic subgroup $F^*$ of a normal subgroup $[ \bar G,A]$. Since $A$ acts trivially on $(F^*)^n$, it follows that $[ \bar G, A]$ also acts trivially on $(F^*)^n$, that is, $(F^*)^n\leqslant Z\big([ \bar G, A]\big)$. The image $F^*/(F^*)^n$ of the generalized Fitting subgroup in the quotient by a central subgroup $(F^*)^n$ is the generalized Fitting subgroup of the quotient. Since the generalized Fitting subgroup contains its centralizer and $|F^*/(F^*)^n|$ is $r$-bounded, we obtain that $F^*$ has $r$-bounded index in $[\bar G,A]$. Then  $(F^*)^n$ also has $r$-bounded index in $[\bar G,A]$. Thus it remains to prove that the order of $(F^*)^n$ is $r$-bounded.

 Since $(F^*)^n\leqslant Z\big([ \bar G, A]\big)$, we obtain that $Z\big([ \bar G, A]\big)$ has $r$-bounded index in $[\bar G,A]$. By Schur's theorem \cite[Theorem~4.12]{rob}, then the derived subgroup $[ \bar G,A]'$ of $[ \bar G,A]$ has $r$-bounded order.

 For any $g\in \bar G$ and $a\in A$ with $|a|=p^k\ne 1$, we have
 $$
 [g,a][g,a]^a[g,a]^{a^2}\cdots [g,a]^{a^{p^k-1}}=1.
 $$
 By the `linearity' of $a$ on the abelian quotient $\overline{[ \bar G,A]}=[ \bar G,A]/[ \bar G,A]'$ we obtain that
 $$
 hh^a h^{a^2}\cdots h^{a^{p^k-1}}=1 \qquad \text{for any}\quad h\in \overline{[ \bar G,a]}.
 $$
It follows that $a$ acts fixed-point-freely on the Hall $p'$-subgroup $U_a$ of $\overline{[ \bar G,a]}$: if $h\in C_{\overline{[ \bar G,A]}}(a)$, the above equation becomes
 $$
 h^{p^k}=1,
 $$
so $h=1$ if it is a $p'$-element. Therefore, $U_a=[U_a,a]$.
Since $\overline{[ \bar G,A]}=\prod_{a\in A}\overline{[ \bar G,a]}$ and this group is abelian, the Hall $p'$-subgroup $U$ of this product is the product $U=\prod_{a\in A}U_a$. Since $U=C_U(A)\times [U,A]$ by coprime action, we must have $C_{U}(A)=1$, as otherwise $U_a=[U_a,a]\leqslant [U,A]\ne U$ for all $a\in A$, contrary to $U=\prod_{a\in A}U_a$.

 Since $(F^*)^n\leqslant C_{[ \bar G,A]}(A)$ and $F^*$ is a $p'$-group, it follows that
 $(F^*)^n\leqslant [ \bar G,A]'$. Since $[ \bar G,A]'$ has $r$-bounded order, as a result, $(F^*)^n$ has $r$-bounded order, and therefore so does $[ \bar G,A]$. \end{proof}

 \begin{lemma}\label{l-order-a}
 The group of automorphisms of $G/O_{p',p}(G)$ induced by $A$ has $r$-bounded order.
 \end{lemma}

\begin{proof}
Let $\bar G=G/ O_{p',p}(G)$. Let $F^*$ be the generalized Fitting subgroup of $ \bar G$, which is a $p'$-group.
By Proposition~\ref{p-hh2}(b) the order of $[F^*,A]$ is $r$-bounded. Hence $A_1:=C_A([F^*,A])$ has $r$-bounded index in $A$. Since $F^*=[F^*,A]C_{F^*}(A)$ due to coprime action, it follows that $A_1$ acts trivially on $F^*$. Then $[\bar G,A_1]$ also acts trivially on $F^*$ by Lemma~\ref{l-ker}. Since $F^*$ contains its centralizer in $\bar G$, it follows that $[\bar G,A_1]\leqslant F^*$. Since $F^*$ has coprime order to $|A|$, then $\bar G=F^*C_{\bar G}(A_1)$ by Lemma~\ref{l-20}(b), so that $\bar G=F^*C_{\bar G}(A_1)\leqslant C_{\bar G}(A_1)$. Thus, $A$ acts on $\bar G$ as a group $A/A_1$ of $r$-bounded order. \end{proof}

 We return to the proof of the theorem.
 Recall that for $\bar G=G/O_{p',p}(G)$ the order $|[\bar G,A]|$ is $r$-bounded by Lemma~\ref{l-order22}. Since $A$ acts on $\bar G$ as a group of $r$-bounded order $\bar A$ by Lemma~\ref{l-order-a}, it follows that $C_{\bar G}(A)$ has $r$-bounded index in $\bar G$. Indeed, for every $a\in \bar A$ the index $|\bar G:C_{\bar G}(a)|$ is $r$-bounded, since it is equal to the number of commutators $[g,a]$, $g\in \bar G$. Then $\bigcap_{a\in \bar A}C_{\bar G}(a)=C_{\bar G}(A)$ is a subgroup of $r$-bounded index in $\bar G$.

 Let $\bar C$ be the normal core of $C_{\bar G}(A)$ in $\bar G$, which is an $A$-invariant normal subgroup of $r$-bounded index in $\bar G$ contained in $C_{\bar G}(A)$.
 Let $C$ be the full inverse image of $\bar C$ in $G$. Since $C$ is $A$-invariant and has $r$-bounded index in $G$, by Lemma~\ref{l-index2} it is sufficient to prove that $[C,A]$ has $r$-bounded rank.
 Since $C$ is a normal subgroup of $G$ containing $O_{p',p}(G)$, we have $O_{p',p}(C)=O_{p',p}(G)$. First we consider $C/O_{p'}(C)$.

 \begin{lemma}\label{l-mainc22}
 The rank of $[C/O_{p'}(C),A]$ is $r$-bounded.
 \end{lemma}

 \begin{proof} Let $\bar C=C/O_{p'}(C)$ for brevity.
Let $P=O_p(\bar C)$. Since $[\bar C,A]\leqslant P$, by the Burnside Basis Theorem the subgroup $[\bar C,A]$ is generated by $r$ elements of the form $[a_1, c_1],\dots ,[a_r ,c_r]$ for some $a_i\in A$, $c_i\in \bar C$. For any $x\in P$, $a\in A$, and $c\in \bar C$ we have
 $$
[[a,c],x]=[a ^{-1}a^c,x]=[a ^{-1},x]^{a^c}[a^c,x]=[a ^{-1},x]^{a^c}[a,x^{c^{-1}}]^c.
$$
The right-hand side belongs to $[P,a]^{a^c}[P,a]^c$. This is a product of two normal subgroups of $P$ each having $r$-bounded rank by Theorem~\ref{t-nilp}. Therefore the rank of this product is $r$-bounded. Since this product does not depend on $x\in P$, we obtain
$$[[a,c],P]\leqslant [P,a]^{a^c}[P,a]^c,$$ so that $[[a,c],P]$ has $r$-bounded rank. Since $[\bar C,A]=\langle [a_1, c_1],\dots ,[a_r ,c_r]\rangle$, the commutator subgroup $[[\bar C,A ],P]$ is equal to the product
$$
[[\bar C,A ],P]=\prod_{i=1}^{r}[[a_i,c_i],P]
$$
and therefore has $r$-bounded rank.

 The subgroup $[[\bar C,A ],P]$ is normal in $\bar C$ and $A$-invariant. In the quotient $\hat{C}=\bar C/[[\bar C,A ],P]$ the subgroup $[\hat C,A]$ is centralized by $\hat P$ and therefore $[\hat C,A]\leqslant Z(\hat P)$, since $[\bar C,A]\leqslant P$. Hence $[\hat C,A]$ is abelian and has rank at most $r$ as it is $r$-generated by hypothesis. Since $[[\bar C,A ],P]$ has $r$-bounded rank, as a result, $[\bar C,A]$ also has $r$-bounded rank.
 \end{proof}

 We now proceed to prove that $[C,A]$ has $r$-bounded rank, which will complete the proof of the theorem.

 Let $P$ now denote an $A$-invariant Sylow $p$-subgroup of $O_{p',p}(C)$. We can assume that $P\ne 1$, since otherwise $C=O_{p'}(C)C_C(A)$ by Lemma~\ref{l-20}(b) and then $[C,A]=[O_{p'}(C),A]=[[O_{p'}(C),A],A]$ has $r$-bounded rank by Theorem~\ref{t-coprime}.
 We have $C=O_{p'}(C)N_C(P)$ by the Frattini argument. Our nearest goal is to show that $[N_C(P),A]$ is generated by $[N_C(P)\cap O_{p'}(C),A]$ and $r$-boundedly many elements from $I_{N_C(P)}(A)$. To lighten the notation, let $N=N_C(P)$ and $S=O_{p'}(C)\cap N$, so that $S\times P$ is a normal $A$-invariant subgroup of $N$ such that $A$ acts trivially on $N/( S\times P)$, that is, $[N,A]\leqslant S\times P$, and $S$ is a $p'$-group, while $P$ is a $p$-group.

 \begin{lemma}
 \label{l-normal22}
 We have $[N,A ]\cap S=[S,A]$; in particular, $[S,A]$ is a normal $A$-invariant subgroup of $N$.
 \end{lemma}

\begin{proof}
 Let $a\in A$ with $|a|=p^k\ne 1$. For any $n\in N$, we have $[n,a]=sh$ for $s\in S$ and $h\in P$. Since
 $$
 [n,a][n,a]^a\cdots [n,a]^{a^{p^k-1}}=1,
 $$
 we also have
 $$
 ss^a\cdots s^{a^{p^k-1}}=1.
 $$
 The image $\bar s$ in the quotient $S/[S,a]$ is centralized by $a$, and therefore the last equation turns into $\bar s^{p^{k}}=1$. Since $s$ is a $p'$-element, this means that $s\in [S,a]$. Then any product of elements of the form $[n,a]=sh$ for $s\in S$ and $h\in P$ that belongs to $S$ must  belong to the product of subgroups of the form $[S,a]$ contained in $[S,A]$. Thus, $[N,A ]\cap S\leqslant [S,A]$, and the reverse inclusion is obvious.

 Since both $S$ and $[N,A]$ are normal $A$-invariant subgroups of $N$, so is $[N,A ]\cap S=[S,A]$.
\end{proof}

\begin{lemma}
 \label{l-n-gen22}
 The subgroup $[N,A]$ is generated by $[S,A]$ and $r$-boundedly many elements from  $I_{N}(A)$.
\end{lemma}

\begin{proof}
 By Lemma~\ref{l-normal22} we can consider the quotient $\bar N=N/[S,A ]$ admitting the action of~$A$. We claim that $[\bar N,A]\leqslant \bar P$. Indeed, $[\bar N,A]\leqslant \bar S\times \bar P$, and since $(|\bar S|,|\bar P|)=1$, in fact, by Lemma~\ref{l-normal22}
 $$
 [\bar N,A]= ([\bar N,A]\cap \bar S)\times ([\bar N,A]\cap \bar P)=[\bar S,A]\times ([\bar N,A]\cap \bar P)=[\bar N,A]\cap \bar P,
 $$
 so that $[\bar N,A]\leqslant \bar P$. By hypothesis and by the Burnside Basis Theorem the subgroup $[\bar N,A]$ is generated by $r$ elements of the form $[a_1, \bar n_1],\dots ,[a_r ,\bar n_r]$ for some $\bar n_i\in \bar N$, $a_i\in A$.
 Let $n_i\in N$ be some fixed pre-images of the elements $\bar n_i$. By multiplying any element of $[N,A]$ by a suitable word in  $[a_1, n_1],\dots ,[a_r , n_r]$ we can obtain an element of $[S,A]$.
Hence, the result follows.
\end{proof}

We are now ready to finish the proof of the theorem. By Lemma~\ref{l-n-gen22} the subgroup $[N_C(P),A]$ is generated by $[S,A]$ and $r$-boundedly many elements from $I_{N_C(P)}(A)$, say, $[a_1, u_1],\dots ,[a_{f(r)} ,u_{f(r)}]$ for some $u_i\in N_C(P)$ and $a_i\in A$. For any $x\in O_{p'}(C)$, $a\in A$, and $u\in N_C(P)$ we have
 $$
[[a,u],x]=[a ^{-1}a^u,x]=[a ^{-1},x]^{a^u}[a^u,x]=[a ^{-1},x]^{a^u}[a,x^{u^{-1}}]^u.
$$
The right-hand side belongs to $[O_{p'}(C),a]^{a^u}[O_{p'}(C),a]^u$. This is a product of two normal subgroups of $O_{p'}(C)$ each having $r$-bounded rank by Theorem~\ref{t-coprime}. Therefore the rank of this product is $r$-bounded. Since this product does not depend on $x\in O_{p'}(C)$, we obtain $$[[a,u],O_{p'}(C)]\leqslant [O_{p'}(C),a]^{a^u}[O_{p'}(C),a]^u,$$
 so that $[[a,u],O_{p'}(C)]$ has $r$-bounded rank. Since
 $$
 [N_C(P),A]=\langle [S,A], [a_1, u_1],\dots ,[a_{f(r} ,u_{f(r)}]\rangle,
  $$
 the commutator subgroup $[[N_C(P),A ],O_{p'}(C)]$ is equal to the product
$$
[[N_ C(P),A ],O_{p'}(C)]=[[S,A],O_{p'}(C)]\cdot \prod_{i=1}^{f(r)}[[a_i,u_i],O_{p'}(C)].
$$
Note that, since $S\leqslant O_{p'}(C)$, the factor $[[S,A],O_{p'}(C)]$ is contained in the subgroup $[A,O_{p'}(C)]$, which has $r$-bounded rank by Theorem~\ref{t-coprime}. Thus, every factor in the above product of $f(r)+1$ normal subgroups of $O_{p'}(C)$ has $r$-bounded rank, and  therefore $[[N_ C(P),A ],O_{p'}(C)]$ has $r$-bounded rank.

 The subgroup $[[N_C(P),A ],O_{p'}(C)]$ is normal in $C=O_{p'}(C)N_C(P)$ and $A$-invariant. Since it has $r$-bounded rank, we can pass to the quotient and assume that $[[N_C(P),A ],O_{p'}(C)]=1$. We claim that under this assumption,
 $[O_{p'}(C),A]$ is $N_C(P)$-invariant and therefore normal in $C=O_{p'}(C)N_C(P)$. Indeed, for any $g\in N_C(P)$
 we have
 $$
 [O_{p'}(C),A]^g= [O_{p'}(C)^g,A^g]\leqslant [O_{p'}(C),A[A,g]]= [O_{p'}(C),A],
 $$
 since $[A ,g]$ centralizes $O_{p'}(C)$.

 The rank of $[O_{p'}(C),A]$ is $r$-bounded by Theorem~\ref{t-coprime}, so we can assume that this $A$-invariant normal subgroup is trivial. Then $O_{p'}(C)$ is centralized by $A$ and therefore also by $[C,A]$, whence $[C,A]\cap O_{p'}(C)\leqslant Z([C,A])$. Since the rank of the central quotient  $[C,A]/([C,A]\cap O_{p'}(C))\cong [C,A]O_{p'}(C)/O_{p'}(C)$ is $r$-bounded by Lemma~\ref{l-mainc22}, the rank of the derived subgroup $[C,A]'$ is $r$-bounded by the Lubotzky--Mann theorem \cite[Theorem~4.2.3]{LM}. The abelian derived quotient $[C,A]/[C,A]'$ has rank at most $r$ as it is $r$-generated by hypothesis. Hence the rank of $[C,A]$ is $r$-bounded.

 The theorem is proved.
\end{proof}

We now derive an important corollary that will be used in the next section.

\begin{corollary}\label{c-fh}
Suppose that $A$ is a $\pi$-group of automorphisms of a finite $\pi$-soluble group $G$ such that any subset of $I_G(A)$ generates a subgroup that can be generated by $r$ elements.
Then the generalized Fitting height of $[G,A]$ is $r$-bounded.
\end{corollary}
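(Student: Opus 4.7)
The plan is to reduce the corollary to Theorem~\ref{t-psol} by breaking $A$ up into its Sylow subgroups, and then to combine the pieces using the good behaviour of the generalized Fitting height under joins of normal subgroups.

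First, for each prime $p$ dividing $|A|$ I would fix a Sylow $p$-subgroup $A_p$ of $A$. Since $A$ is a $\pi$-group we have $p\in\pi$, and so $\pi$-solubility of $G$ entails in particular $p$-solubility. Theorem~\ref{t-psol} applied to the $p$-group of automorphisms $A_p$ acting on the $p$-soluble group $G$ then yields that $[G,A_p]$ has $r$-bounded rank, and Lemma~\ref{l-212} translates this into an $r$-bound, say $f(r)$, on the generalized Fitting height $h^*([G,A_p])$.

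Second, any finite group is generated by a choice of one Sylow $p$-subgroup per prime divisor of its order, so $A=\langle A_p:p\mid |A|\rangle$; two applications of Lemma~\ref{l-gen} (once with $A_p=\langle A_p\rangle$ and once with $A=\langle\bigcup_p A_p\rangle$) then give $[G,A]=\prod_p[G,A_p]$. Each factor $[G,A_p]$ is a normal subgroup of $G$ (being the commutator of two subgroups of the semidirect product $G\cdot A_p$, contained in $G$), and hence is normal in $[G,A]\leqslant G$.

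The final ingredient is the elementary observation that if a group $H$ is generated by normal subgroups $N_i$, then $h^*(H)\leqslant \max_i h^*(N_i)$. This rests on the monotonicity $F^*(N)\leqslant F^*(H)$ for $N\trianglelefteq H$: $F(N)$ is a nilpotent characteristic subgroup of $N$, hence a nilpotent normal subgroup of $H$, so $F(N)\leqslant F(H)$; and any subnormal quasisimple subgroup of $N$ is automatically subnormal in $H$, so $E(N)\leqslant E(H)$. A straightforward induction on $k$ upgrades this to $F^*_k(N)\leqslant F^*_k(H)$ for all $k$. Applying it with $k=f(r)$ to every $[G,A_p]\trianglelefteq [G,A]$ yields $F^*_{f(r)}([G,A])\geqslant \prod_p[G,A_p]=[G,A]$, hence $h^*([G,A])\leqslant f(r)$. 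The whole argument is essentially a clean reduction and I do not anticipate any substantive obstacle; the only technical points to verify carefully are the monotonicity of $F^*$ under normal inclusion and the normality of each $[G,A_p]$ in $[G,A]$.
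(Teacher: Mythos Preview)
Your proposal is correct and follows essentially the same route as the paper's own proof: pass to Sylow $p$-subgroups $A_p$ of $A$, apply Theorem~\ref{t-psol} and Lemma~\ref{l-212} to bound $h^*([G,A_p])$, write $[G,A]=\prod_p[G,A_p]$ via Lemma~\ref{l-gen}, and use that the generalized Fitting height of a group generated by normal subgroups is the maximum of theirs. The only difference is that you spell out the monotonicity $F^*_k(N)\leqslant F^*_k(H)$ for $N\trianglelefteq H$, which the paper simply asserts.
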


\begin{proof}
For every  prime $p\in \pi$, choose a Sylow $p$-subgroup $A_p$ of $A$. The rank of $[G,A_p]$ is $r$-bounded by Theorem~\ref{t-psol}. Hence the generalized Fitting height of $[G,A_p]$ is $r$-bounded by Lemma~\ref{l-212}. Since
 $$
 [G,A]=\prod_{p\in \pi}[G,A_p],
 $$
 where all subgroups $[G,A_p]$ are normal, the generalized  Fitting height of  $[G,A]$
 is at most the maximum of their generalized  Fitting heights, and so it is $r$-bounded.
\end{proof}

\section{$\pi$-Automorphisms of $\pi$-soluble groups}

In this section we prove the main Theorem~A about  $\pi$-groups of automorphisms of $\pi$-soluble groups. The key step is the case of a cyclic group of automorphisms. First we make three observations about `big' primes.

Let $\rho $ be an automorphism of prime-power order $p^k$ of a finite $p$-soluble group $G$ such that every subset of $I_G(\langle\rho\rangle )$ generates a subgroup that can be generated by $r$ elements.
By Proposition~\ref{p-hh2}(a), if $\rho$ acts non-trivially on $F^*(G/O_{p',p}(G))$, then the prime $p$ is $r$-bounded. In other words, there is an $r$-bounded number $f(r)$ such that if $p>f(r)$, then $\rho$ centralizes $F^*(G/O_{p',p}(G))$. Then $[G/O_{p',p}(G),\rho]$ also centralizes $F^*(G/O_{p',p}(G))$ by Lemma~\ref{l-ker}, and then $[G/O_{p',p}(G),\rho]\leqslant F^*(G/O_{p',p}(G))$. Since $F^*(G/O_{p',p}(G))$ is a $p'$-group, it follows by Lemma~\ref{l-20}(b) that $G/O_{p',p}(G)=F^*(G/O_{p',p}(G))C_{G/O_{p',p}(G)}(\rho)\leqslant C_{G/O_{p',p}(G)}(\rho)$, that is,
\begin{equation}\label{e-big1}
  [G,\rho ]\leqslant O_{p',p}(G)\qquad \text{if }|\rho |=p^k\text{ for a prime }p>f(r).
\end{equation}

Let $\sigma $ be a coprime automorphism of a finite group $H$ such that every subset of $I_H(\langle\sigma\rangle )$ generates a subgroup that can be generated by $r$ elements. By \cite[Proposition~3.13]{agks}, there is an $r$-bounded number $g(r)$ such that $\sigma ^{g(r)}$ acts trivially on $H/F^*(H)$. It follows that if $\sigma$ has prime-power order $p^k$ with $p>g(r)$, then $\sigma$ acts trivially on $H/F^*(H)$, that is,
\begin{equation}\label{e-big2}
[H,\sigma ]\leqslant F^*(H)\qquad \text{if }|\sigma  |=p^k\text{ for a prime }p>g(r).
\end{equation}

Let $\tau $ be a coprime automorphism of a finite semisimple group $S$ that is a direct product of $m$ simple groups. Suppose that every subset of $I_S(\langle\tau\rangle )$ generates a subgroup that can be generated by $r$ elements. By Theorem~\ref{t-coprime}, the rank of $[S,\tau]$ is $r$-bounded, whence, in particular, the number of simple factors in $[S,\tau]$ is at most some $r$-bounded number $h(r)$. It follows that if $\tau$ has prime-power order $p^k$ with $p>h(r)$, then $\tau$ normalizes every simple factor in  $S= [S,\tau]\times C_S(\tau)$, that is,
\begin{equation}\label{e-big3}
\tau \text{ normalizes every simple factor in }S \text{ if }|\tau  |=p^k\text{ for a prime }p>h(r).
\end{equation}
\begin{definition*}
We say that a prime $p$ is \emph{big} if $p>\max\{f(r),g(r),h(r)\}$ for the functions $f(r),g(r),h(r)$ in \eqref{e-big1}, \eqref{e-big2},  \eqref{e-big3}, and \emph{small} otherwise.
\end{definition*}

The following theorem is of course the special case  of the main Theorem~A  for a cyclic group of automorphisms, but it is a key result in the proof of the latter.

\begin{theorem}\label{t-sol}
Let $G$ be a finite $\pi$-soluble group admitting a $\pi$-automorphism $\varphi$ such that every subset of $I_G(\langle\varphi\rangle)$ generates a subgroup that can be generated by $r$ elements. Then the rank of $[G,\varphi]$ is $r$-bounded.
\end{theorem}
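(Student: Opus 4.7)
I would induct on $h^* := h^*([G,\varphi])$, which is $r$-bounded by Corollary~\ref{c-fh}, combining this with a key reduction that bounds $K := [G,\varphi]$ in terms of $[K,\varphi]$ at the cost of only an additive $r$. Specifically, $K/[K,\varphi]$ is normal in $G/[K,\varphi]$ and centralized by $\varphi$, so Lemma~\ref{l-ker} applied inside $G/[K,\varphi]$ yields that $[G/[K,\varphi],\varphi] = K/[K,\varphi]$ centralizes $K/[K,\varphi]$, whence the latter is abelian. Since $K$ is $r$-generated by hypothesis (applied to the full set $I_G(\varphi)$), $K/[K,\varphi]$ is $r$-generated abelian of rank at most $r$; hence rank $K \leq$ rank $[K,\varphi] + r$, and it suffices to bound the rank of $[K,\varphi]$.

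For the base case $h^* \leq 1$, $K = F^*(K) = F(K)\cdot E(K)$ with $[F(K),E(K)] = 1$ (if $K \neq 1$), so $[K,\varphi] = [F(K),\varphi]\cdot[E(K),\varphi]$. Theorem~\ref{t-nilp} bounds the rank of $[F(K),\varphi]$. For $[E(K),\varphi]$: every non-abelian simple composition factor of the $\pi$-soluble group $G$ is a $\pi'$-group, so $E(K)/Z(E(K))$ is $\pi'$-semisimple and $\varphi$ acts on it coprimely; Theorem~\ref{t-coprime} then bounds the rank of $[E(K)/Z(E(K)),\varphi]$. The subgroup $[E(K),\varphi]\cap Z(E(K))$ is central in $[E(K),\varphi]$ with quotient of $r$-bounded rank; the Lubotzky--Mann theorem \cite[Theorem~4.2.3]{LM} bounds the derived subgroup $[E(K),\varphi]'$, while the abelianization $[E(K),\varphi]^{\mathrm{ab}}$ is generated by images of $I_{E(K)}(\varphi) \subseteq I_G(\varphi)$ and is therefore $r$-generated abelian of rank at most $r$. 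Summing bounds the rank of $[K,\varphi]$, and the key reduction completes the base case.

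For the inductive step $h^*(K) \geq 2$, the characteristic subgroup $F^*(K)$ is normal in $G$. I would apply the theorem inductively to $F^*(K)$ (with $h^*([F^*(K),\varphi]) \leq 1 < h^*(K)$) to bound the rank of $[F^*(K),\varphi]$, and to $\bar G := G/F^*(K)$ (with $h^*([\bar G,\varphi]) = h^*(K) - 1$) to bound the rank of $[\bar G,\varphi] = K/F^*(K)$. The section $F^*(K)/[F^*(K),\varphi]$ is centralized by $\varphi$, hence by Lemma~\ref{l-ker} lies in the centre of $K/[F^*(K),\varphi]$, whose quotient by this central subgroup is $K/F^*(K)$ of $r$-bounded rank; a further application of the Lubotzky--Mann theorem together with the hypothesis (bounding the abelianization of the quotient, which is $r$-generated as a quotient of $K$) gives that $K/[F^*(K),\varphi]$ has $r$-bounded rank. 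By subadditivity of rank, $[G,\varphi] = K$ has $r$-bounded rank. The main obstacle is establishing the key reduction---that $K/[K,\varphi]$ is abelian---via Lemma~\ref{l-ker}, which bridges the gap between bounding the iterated commutator $[K,\varphi]$ and bounding $K$ itself, a gap that arises precisely because the action is not coprime; the small/big prime machinery set up before the theorem is designed for the non-cyclic Theorem~A that follows, while the cyclic case can be handled using only $h^*$-induction together with this reduction.
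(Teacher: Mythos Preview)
Your argument is correct and takes a genuinely different route from the paper's proof. The paper proves Theorem~\ref{t-sol} by splitting $\varphi$ into its ``big'' and ``small'' prime parts, disposing of the small primes via Theorem~\ref{t-psol}, and then---assuming $|\varphi|$ has only big prime divisors---using \eqref{e-big1}--\eqref{e-big3} to show that $[G,\varphi]/F^*([G,\varphi])$ is nilpotent of $r$-bounded rank (Lemma~\ref{l-rankq}), reducing to $F^*([G,\varphi])=F([G,\varphi])$, and then carrying out a delicate analysis of a Sylow $p$-subgroup of $F([G,\varphi])$ (Proposition~\ref{p-genp} through Lemma~\ref{l-powerful2}) using dimension estimates and powerful $p$-group theory. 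By contrast, you invoke Corollary~\ref{c-fh} once to bound $h^*([G,\varphi])$, and then run a clean induction on $h^*$: at the generalized Fitting subgroup level you use Theorem~\ref{t-nilp} for the nilpotent part and Theorem~\ref{t-coprime} for the semisimple part (which is a $\pi'$-group), while the passage from $[K,\varphi]$ (or $[F^*(K),\varphi]$) to $K$ is handled by Lemma~\ref{l-ker} plus Lubotzky--Mann plus the $r$-generation of $K$. This is shorter and more conceptual; it avoids the big/small prime dichotomy and the Sylow analysis entirely, at the cost of a bound that compounds through $h^*$ (which is still $r$-bounded). One small correction: your closing remark that the big/small prime machinery ``is designed for the non-cyclic Theorem~A that follows'' is not accurate---in the paper that machinery is used precisely in the proof of Theorem~\ref{t-sol} itself, and the subsequent proof of Theorem~A instead reduces to Theorem~\ref{t-sol} and Theorem~\ref{t-psol}. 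Your approach shows that the big/small prime analysis can be bypassed in the cyclic case once Corollary~\ref{c-fh} is available.
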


\begin{proof}
 For each prime $p$, let $\langle\varphi_p\rangle$ be a Sylow $p$-subgroup of $\langle\varphi\rangle$. Let
 $$\beta =
 \prod_{\substack{p\text{ is a big}\\\text{prime}}}\varphi_p.
 $$
 Then
 $$
 [G,\varphi]=[G,\beta ]\cdot \prod_{\substack{p\text{ is a small}\\\text{prime}}}[G,\varphi_p].
 $$
 The rank of each subgroup $[G,\varphi_p]$ is $r$-bounded by Theorem~\ref{t-psol}. Since the number of small primes
 is $r$-bounded, the product
 $$
 \prod_{\substack{p\text{ is a small}\\\text{prime}}}[G,\varphi_p]
 $$
has $r$-bounded rank.  Therefore it remains to prove that the rank of $[G,\beta ]$ is $r$-bounded. Hence it is sufficient to consider the case where $\varphi=\beta $, that is, the order of $\varphi$ is only divisible by big primes, and this is what we assume throughout what follows.

 \begin{lemma}\label{l-psiq}
  Let $\langle\psi\rangle$ be a  Sylow $q$-subgroup of $\langle\varphi\rangle$ with $|\psi|=q^k\ne 1$.  The image of $[G,\psi ]$ in $G/F^*(G)$ is a $q$-group, possibly trivial.
 \end{lemma}

 \begin{proof}
 Since $q$ is a big prime by our assumption, $\psi $ acts trivially on $G/O_{q',q}(G)$ by \eqref{e-big1}, that is,  $[G,\psi ]\leqslant O_{q',q}(G)$.   Since $q$ is a big prime, $\psi $ acts trivially on $O_{q'}(G)/F^*(O_{q'}(G))$ by \eqref{e-big2}, whence   $[G,\psi ]$ centralizes $O_{q'}(G)/F^*(O_{q'}(G))$ by Lemma~\ref{l-ker}. Since $F^*(O_{q'}(G))\leqslant F^*(G)$ and $F^*([G,\psi])=[G,\psi]\cap F^*(G)$, as a result, $[G,\psi ]/F^*([G,\psi])$ is a product of a central $q'$-subgroup centralized by $\psi$ and a $q$-subgroup.

 We claim that $[G,\psi ]/F^*([G,\psi])$ is a $q$-group, that is, $[G,\psi ]$ is contained in $QF^*([G,\psi])$, where $Q$ is a Sylow $q$-subgroup of the   inverse image of $F(G/F^*([G,\psi]))$.  For any $g\in G$ we have
 $$
 [g,\psi][g,\psi]^{\psi}\cdots [g,\psi]^{\psi^{q^k-1}}=1.
 $$
 Hence the image of $[g,\psi ]$ in $[G,\psi ]/QF^*([G,\psi])$, which is centralized by $\psi$ as mentioned above, has order dividing $q^k$, so must be trivial as it is a $q'$-element.
 \end{proof}

 \begin{lemma}\label{l-rankq}
The quotient $[G,\varphi]/F^*([G,\varphi])$ is a nilpotent group of $r$-bounded rank.
 \end{lemma}

 \begin{proof}
 We know that $[G,\varphi]=\prod_{q}[G,\varphi _q]$, where $\langle\varphi_q\rangle$ is a Sylow $q$-subgroup of $\langle\varphi\rangle$. For every~$q$, the image of $[G,\varphi _q]$ in $G/F^*(G)$ is a normal $q$-subgroup by Lemma~\ref{l-psiq}. Hence $[G,\varphi]/F^*([G,\varphi])=[G,\varphi]/(F^*(G)\cap [G,\varphi])\cong [G,\varphi]F^*(G)/F^*(G)$ is a nilpotent group. This group has $r$-bounded rank, since the rank of every Sylow $q$-subgroup $[G,\varphi _q]F^*(G)/F^*(G)$ is $r$-bounded by Theorem~\ref{t-psol}.
 \end{proof}

 We return to the proof of the theorem. By Lemma~\ref{l-rankq}, it remains to show that the rank of $F^*([G,\varphi])$ is also $r$-bounded. The bulk of the proof is
 the case where $F^*([G,\varphi])$ is nilpotent, that is, $F^*([G,\varphi])=F([G,\varphi])$. First we perform a reduction to this case.

 Suppose that $F^*([G,\varphi])\ne F([G,\varphi])$.
 Since $[G,\varphi]$ is a normal subgroup, we have  $F^*([G,\varphi])=[G,\varphi]\cap F^*(G)$ and $F([G,\varphi])=[G,\varphi]\cap F(G)$. The quotient  $L:=F^*(G)/F(G)$  is a $\pi'$-group which is a direct product $S_1\times\dots\times S_m$ of simple groups $S_i$  permuted by $\varphi$. Since $|\varphi|$ is only divisible by big primes by our assumption, in fact, $\varphi$ normalizes every $S_i$ by \eqref{e-big3}. We have $[L,\varphi]\ne 1$, since otherwise $[L,[G,\varphi ]]=1$ by Lemma~\ref{l-ker}, whence $L=1$ contrary to our assumption. Since $\varphi$ is a coprime automorphism of $L$, by Theorem~\ref{t-coprime} the subgroup $[L,\varphi]$ has $r$-bounded rank, so that,  in particular, it is a  product of an $r$-bounded number of factors $S_{i_j}$ such that $S_{i_j}=[S_{i_j},\varphi]$.

We  assume without loss of generality that $[L,\varphi]=S_1\times\dots\times S_{k(r)}$ for an $r$-bounded number $k(r)$.  For every $i=1,\dots ,k(r)$ there is a prime divisor $q_i$ of $|\varphi|$ such that the Sylow $q_i$-subgroup $\langle\varphi_{q_i}\rangle$ of $\langle\varphi\rangle$ acts non-trivially on $S_i$, so that $S_i=[S_i,\varphi_{q_i}]$ (these primes $q_i$ need not be all different).
Then, in particular, the image of $[G,\varphi_{q_i}]$ in $G/F(G)$ contains $S_i$. Note that each $[G,\varphi_{q_i}]$ is a normal $\varphi$-invariant subgroup of $G$. As a result, the image of the product
$$
\prod_{i=1}^{k(r)}[G,\varphi_{q_i}]
$$
in $G/F(G)$ contains $[L,\varphi]$. This means that in the quotient
$$
\bar G=G\Big/\prod_{i=1}^{k(r)}[G,\varphi_{q_i}]
 $$
 the subgroup $F^*([\bar G,\varphi])$ is nilpotent. Each of the subgroups $[G,\varphi_{q_i}]$ has $r$-bounded rank by Theorem~\ref{t-psol}, and since their number is $r$-bounded, their product also has $r$-bounded rank.  Hence it remains to prove that $[\bar G,\varphi]$ has $r$-bounded rank.

 In other words, with a change of notation, it is sufficient to prove that  $[G,\varphi]$ has $r$-bounded rank in the case where  $F^*([ G,\varphi])= F([ G,\varphi])$ is nilpotent, and we assume this for the rest of this section.

  It is sufficient to prove that the rank of any Sylow $p$-subgroup $P$ of $F([G,\varphi])$ is $r$-bounded.  The proof of this fact resembles the end of the proof of \cite[Theorem~1.1]{tams} using the lemmas from \S\,\ref{s-nilp} that are similar to corresponding  lemmas in \cite{agks,tams}. But the first step, showing that $P$ is generated by $r$-boundedly many elements, has to be different from the proof in \cite{tams}. This is not only because for non-coprime automorphism we cannot assume that $G=[G,\varphi]$, but also because it is unclear, and probably not true, if the quotient $[G,\varphi]/F([G,\varphi])$ can be generated by $r$-boundedly many elements from $I_G(\langle\varphi\rangle)$. However, once we show that $P$ is generated by $r$-boundedly many elements, the rest of the proof is similar to the end of the proof of \cite[Theorem~1.1]{tams}.

\begin{proposition}\label{p-genp}
The Sylow $p$-subgroup $P$ of $F([G,\varphi])$ is generated by $r$-boundedly many elements.
\end{proposition}

\begin{proof}
 We need to show that the elementary abelian $p$-group $V=P/\Phi (P)$ has $r$-bounded rank. Since $V$ can also be regarded as a vector space over $\mathbb{F}_p$, the rank of its subgroups is also the dimension as $\mathbb{F}_p$-subspaces, so we can use the notation ``$\dim$'' for this rank and additive notation for products in $V$. Note that $V$ is centralized by $F([G,\varphi])$, so that the group $G/F([G,\varphi])$ naturally acts on $V$. Recall also that for  a Sylow $q$-subgroup $\langle\varphi_q\rangle$ of $\langle\varphi\rangle$, the image of $[G,\varphi_q]$ in $[G,\varphi]/F([G,\varphi])$ is a $q$-group by Lemma~\ref{l-psiq} and $[G,\varphi]/F([G,\varphi])$ is a direct product of the images of these subgroups $[G,\varphi_q]$ by Lemma~\ref{l-rankq}. Let the bar denote the images in $\bar G=G/F([G,\varphi])$.

 \begin{lemma}\label{l-genbig}
 Suppose that $U$ is a $\bar G\langle\varphi\rangle$-invariant section of $V$. Let $\langle\psi\rangle $ be a Sylow $q$-subgroup of $\langle\varphi\rangle$ for some prime $q$ (which could also be $q=p$). Then
 $$
 \dim [U,\psi ]^{\bar G}\leqslant (2r+1)\dim [U,\psi ],
 $$
 where $[U,\psi ]^{\bar G}$ is the normal closure of $[U,\psi ]$ in the natural semidirect product $U\rtimes \bar G$.
 \end{lemma}
 \begin{proof}
 Since  $\psi\in \langle\varphi\rangle$ and $[\bar G,\psi]$ is a $q$-group by Lemma~\ref{l-psiq}, by the hypothesis and the Burnside Basis Theorem $[\bar G,\psi]$ is generated by $r$ elements of the form $[\psi, g_1],\dots ,[\psi, g_r]$ for some $g_i\in \bar G$. For any $x\in U$ we have
 $$
 [[\psi, g_i],x]=[\psi ^{-1}\psi^{g_i},x]=[\psi ^{-1},x]^{\psi^{g_i}}+ [\psi^{g_i},x]=[\psi ^{-1},x]^{\psi^{g_i}}+ [\psi,x^{g_i^{-1}}]^{g_i}.
 $$
 The right-hand side belongs to the sum $[U,\psi ]^{\psi^{g_i}}+ [U,\psi ]^{g_i}$ of two subgroups of the abelian group $U$ each having rank  $\dim [U,\psi ]$, so this sum has rank at most $2\dim [U,\psi ]$. Hence $[U,[\psi, g_i]]$ has rank at most $2\dim [U,\psi ]$. Since $[\bar G,\psi]=\langle [\psi ,g_1],\dots ,[\psi, g_r]\rangle$, we have
 $$
[U,[\psi, \bar G]]=\sum_{i=1}^{r} [U,[\psi, g_i]],
 $$
 so $[U,[\psi, \bar G]]$ has rank at most $2r\dim [U,\psi ]$. Note that $[U,[\psi, \bar G]]$ is $\bar G\langle\varphi\rangle$-invariant. Modulo $[U,[\psi, \bar G]]$ the subgroup $[U,\psi]$ is $\bar G$-invariant: for any $g\in \bar G$ and $x\in U$ we have
 $$
 \begin{aligned}
 [\psi,x]^g=[ \psi [\psi ,g],x^g]&=[\psi,x^g ]^{[\psi ,g]}+ [[\psi ,g],x^g]\\
 &=
 [\psi,x^g ]+ [[\psi,x^g ], [\psi ,g]] + [[\psi ,g],x^g]\\
 &\in [U,\psi]+ [U,[\psi, \bar G]].
  \end{aligned}
 $$
 Hence $[U,\psi]^{\bar G}\leqslant [U,\psi] + [U,[\psi, \bar G]]$, and therefore  $\dim [U,\psi]^{\bar G}\leqslant (2r+1)\dim [U,\psi ]$.
 \end{proof}

We return to proving Proposition~\ref{p-genp} about $r$-bounded generation of~$P$.
We now apply Lemma~\ref{l-genbig} with $\psi$ being consecutively $\psi_{1},\dots ,\psi_{n}$, where $\varphi=\psi_{1}\cdots \psi_{n}$ for elements $\psi_{i}$ of order $q_i^{k_i}$, after arbitrarily ordering all the primes $q_i$ dividing $|\varphi|$. Namely, we construct by induction $\bar G\langle\varphi\rangle$-invariant subspaces $U_i$ of $V$ as follows. We define $U_0=0$ and let $U_1=[V,\psi_{1}]^{\bar G}$. Note that $\dim U_1/U_0=\dim U_1\leqslant (2r+1)\dim [V/U_0,\psi _1]$ by Lemma~\ref{l-genbig} and $U_1$ is $\bar G\langle\varphi\rangle$-invariant. When $U_{k-1}$ is defined (which is $\bar G\langle\varphi\rangle$-invariant), we define $U_{k}$ to be the full inverse image in $V$ of $[V/U_{k-1},\psi_{k}]^{\bar G}$. We do this consecutively for $k=1,\dots ,n$, that is, for all primes $q_1,\dots ,q_n$ dividing $|\varphi|$. By Lemma~\ref{l-genbig} we have
\begin{equation}\label{e-sum20}
\dim (U_{k}/U_{k-1})\leqslant (2r+1)\dim [V/U_{k-1},\psi _{k}]
\end{equation}
for every $k=1,\dots ,n$.

For any $k$, we see that \begin{equation}\label{e-sum2}
U_{k}=\sum_{i=1}^{k}[V, \psi_{i}]^{\bar G}.
\end{equation}
Indeed, in obvious induction, if $U_s=\sum_{i=1}^{s}[V, \psi_{i}]^{\bar G}$ by the induction hypothesis,  then  by construction $U_{s+1}=U_s+[V, \psi_{s+1}]^{\bar G}=\sum_{i=1}^{s}[V, \psi_{i}]^{\bar G}+[V, \psi_{s+1}]^{\bar G}$.

Since $\langle\varphi\rangle=\langle\psi_1,\dots,\psi_n\rangle$, we have
$$
[V,\varphi]^{\bar G} =\sum_{i=1}^{n}[V,\psi_{i}]^{\bar G}=U_n
$$
 by \eqref{e-sum2}.
 As a result, using  \eqref{e-sum20} we obtain
\begin{equation}\label{e-dim}
 \dim [V,\varphi]^{\bar G}=\sum_{i=1}^{n}\dim U_i/U_{i-1}\leqslant (2r+1) \sum_{i=1}^{n}\dim [V/U_{i-1},\psi _{i}].
\end{equation}

We now estimate the right-hand side of \eqref{e-dim}.

\begin{lemma}\label{l-dimv}
We have $\sum_{i=1}^{n}\dim [V/U_{i-1},\psi _{i}]\leqslant \dim [V,\varphi]\leqslant r$.
\end{lemma}

\begin{proof}
First note that $[V/U_{i-1},\psi _{i}]\leqslant U_i/U_{i-1}$ for any $i$ by construction. For each $i$ by Lemma~\ref{l-IGA}(a)  we can choose a basis of $[V/U_{i-1},\psi _{i}]$ to consist of the images of elements $b_{i1},\dots ,b_{ik_i}$ from $I_V(\langle \psi_i\rangle)\subseteq I_V(\langle \varphi\rangle)$.  Then all these elements $b_{ij}$, $i=1,\dots , n$, $j=1,\dots ,k_i$, together are linearly independent. Indeed, if $\sum_{i,j}\alpha_{ij}b_{ij}=0$, then, firstly, $\alpha_{n1}=\cdots =\alpha_{nk_n}=0$ because $b_{n1},\dots,b_{nk_n}$ are linearly independent modulo $U_{n-1}$, while all the other $b_{ij}$ for $i\leqslant n-1$ are contained in $U_{n-1}$. Then $\alpha_{n-1,1}=\cdots =\alpha_{n-1,k_{n-1}}=0$ because $b_{n-1,1},\dots,b_{n-1,k_{n-1}}$ are linearly independent modulo $U_{n-2}$, while all the other $b_{ij}$ for $i\leqslant n-2$ are contained in $U_{n-2}$. Proceeding in obvious induction we obtain that $\alpha_{ij}=0$ for all $i,j$. Since these $b_{ij}$ are linearly independent elements from $I_V(\langle \varphi\rangle)$, their number, which is exactly $\sum_{i=1}^{n}\dim [V/U_{i-1},\psi _{i}]$ is at most $\dim [V,\varphi]\leqslant r$.
\end{proof}

We now finish the proof of Proposition~\ref{p-genp} about the number of generators of $P$. Combining \eqref{e-dim} with Lemma~\ref{l-dimv} we obtain
\begin{equation}\label{e-dimmm}
\dim [V,\varphi]^{\bar G}\leqslant 2r^2+r.
\end{equation}
Consider the quotient $\hat G=G\big/(O_{p'}(F([G,\varphi ]))\Phi(P))$, the images in which we denote by hat. Clearly, $\hat G$ acts on $\hat P$ as $\bar G$, and  $\hat P\cong V$ as $\mathbb{F}_p \bar G$-modules. The normal $\varphi$-invariant subgroup $[\hat P,\varphi]^{\bar G}$ of $\hat G$ has $r$-bounded rank (at most $2r^2+r$) by \eqref{e-dimmm}. In the quotient $\hat G/[\hat P,\varphi]^{\bar G}$ the image of $\hat P$ is centralized by $\varphi$ and therefore  $\hat P\leqslant Z([\hat G,\varphi])$ by Lemma~\ref{l-ker}. Since $[\hat G,\varphi]/\hat P$ has $r$-bounded rank by Lemma~\ref{l-rankq}, the rank of the derived subgroup $[\hat G,\varphi]'$ is also $r$-bounded by the Lubotzky--Mann theorem \cite[Theorem~4.2.3]{LM}. The abelian derived quotient of $[\hat G,\varphi]$  has rank at most $r$ by hypothesis. Thus, the rank of $[\hat G,\varphi]$ is $r$-bounded, and in particular the rank of $\hat P\cong P/\Phi (P)$ is $r$-bounded, which means that $P$ is generated by $r$-boundedly many elements.
\end{proof}

The rest of the proof of Theorem~\ref{t-sol}  is similar to the end of the proof of \cite[Theorem~1.1]{tams}.

\begin{lemma}\label{l-g-i-gen}
For any $i=1,2,\dots $ there is an $(r,i)$-bounded number $m_i=m_i(r,i)$ such that $\gamma_i(P)$ has $m_i$ generators.
\end{lemma}

\begin{proof}
We need to show that the elementary abelian $p$-group $W=\gamma_i(P)/\Phi (\gamma_i(P))$ has $(r,i)$-bounded rank. We have $| [W,\varphi ]|\leqslant p^r$ by hypothesis.
 In the quotient $\bar G=G/\Phi (\gamma_i(P))$ the subgroup $W$ is normal in $\bar G$ and is a $\varphi$-invariant $p$-subgroup of $[\bar G,\varphi]$. By Lemma~\ref{l-22}, then $W\leqslant Z_{2r+1}(O_p([\bar G,\varphi]))$; in particular, $W\leqslant Z_{2r+1}(\bar P)$. Since $W=\gamma_i(\bar P)$, it follows that the nilpotency class of $\bar P$ is at most $i+2r+1$. Using also a bound in terms of $r$ for the number of generators of $\bar P$ given by Proposition~\ref{p-genp}, we obtain that the rank of $\bar P$ is $(r,i)$-bounded by Lemma~\ref{l-nil-rank}, and, in particular, $W$ has $(r,i)$-bounded number of generators and therefore so does $\gamma_i(P)$.
 \end{proof}

\begin{lemma}\label{l-powerful2}
There exists a number $\lambda=\lambda(r)$ depending only on $r$ such that $\gamma_{2\lambda+1}(P)$ is powerful.
\end{lemma}

\begin{proof}
Let $s'(m)=s(1,m)$ if $p\ne 2$, and $s'(m)=s(2,m)$ if $p=2$ for the function $s(k,m)$ as in Lemma~\ref{l-24}, and let $l(r)$ be as in Lemma~\ref{l-23}. Let $\lambda=s'(l(r))$ and consider $N=\gamma_{2\lambda+1}(P)$. Note that $N$ is normal in $G$. In order to show that $N'\leqslant N^p$ (or $N'\leqslant N^4$ when $p=2$), we assume that $N$ is of exponent $p$ (or 4) and prove that $N$ is abelian.

Since the subgroup $N$ is of exponent $p$ (or $4$), the rank of $[N,\varphi ]$ is at most $l(r)$ by Lemma~\ref{l-23}. Then $|[N,\varphi ]|\leqslant p^{s'(l(r))}=p^\lambda$ by Lemma~\ref{l-24}, whence $N\leqslant Z_{2\lambda+1}(O_p([G,\varphi]))$ by Lemma~\ref{l-22}; in particular, $N\leqslant Z_{2\lambda+1}(P)$. Since $[\gamma_i(P),Z_i(P)]=1$ for any positive integer~$i$, we conclude that $N$ is abelian, as required.
\end{proof}

We now finish the proof of the theorem. For the $r$-bounded number $\lambda$ furnished by Lemma~\ref{l-powerful2}, let $N=\gamma_{2\lambda+1}(P)$, which is a powerful $p$-group by that lemma. Since it is generated by $r$-boundedly many elements by Lemma~\ref{l-g-i-gen}, it has $r$-bounded rank by the well-known property of powerful $p$-groups (see \cite[Theorem~2.9]{DDM}). Since the nilpotency class of $P/N$ is $r$-bounded and $P$ is generated by $r$-boundedly many elements by Proposition~\ref{p-genp}, the rank of $P/N$ is also $r$-bounded by Lemma~\ref{l-nil-rank}. Since the rank of $P$ is at most the sum of the ranks of $N$ and $P/N$, we obtain a bound for the rank of $P$ in terms of $r$. The theorem is proved.
\end{proof}

We are now ready to prove the main Theorem~A.

\begin{proof}[Proof of Theorem~{\rm A}]
Recall that $A$ is  a $\pi$-group of automorphisms of a finite $\pi$-soluble group~$G$ such that every subset of $I_G(A)$ generates a subgroup that can be generated by $r$ elements. We need to show that the rank of $[G,A]$ is $r$-bounded. First we perform reduction to the case where  $[G,A]$ is soluble.

By Corollary~\ref{c-fh} the generalized Fitting height of $[G,A]$ is $r$-bounded, so that $[G,A]$ has a normal $A$-invariant series of $r$-bounded length the factors of which are either nilpotent or semisimple. Let $L_1,\dots,L_{k(r)}$ be the semisimple factors of this series, for some $r$-bounded number $k(r)$. Since each $L_i$ is a $\pi'$-group, the commutator subgroup $[L_i,A]$ has $r$-bounded rank by Theorem~\ref{t-coprime}, and $C_A([L_i,A])=C_A(L_i)$ because of coprime action. By Lemma~\ref{l-autsem}, the order of $A/C_A(L_i)$ is $r$-bounded. Let
$$
B=\bigcap_{i=1}^{k(r)}C_A(L_i).
$$
Then $[G,B]$ is soluble, since $B$ centralizes each $L_i$, and therefore $[G,B]$ also centralizes each $L_i$ by Lemma~\ref{l-ker}.

The order of $A/B$ is  $r$-bounded, as this group embeds into the direct product of the $k(r)$  quotients $A/C_A(L_i)$ of $r$-bounded order. In particular,  $A/B$ is generated by $r$-bounded number of elements, say, $A/B=\langle a_1,\dots, a_{f(r)}\rangle$ for some $r$-bounded number $f(r)$. The group $A/B$ naturally acts on the quotient $G/[G,B]$. We have
$$
[G,A]/[G,B]=\prod_{i=1}^{f(r)}\big[G/[G,B],a_i\big].
$$
The rank of each of the normal subgroups $\big[G/[G,B],a_i\big]$ is $r$-bounded by Theorem~\ref{t-sol}. Hence the rank of $[G,A]/[G,B]$ is $r$-bounded. It remains to prove that the rank of $[G,B]$ is also $r$-bounded.

Recall that $[G,B]$ is a soluble group, and its   Fitting height is $r$-bounded by Corollary~\ref{c-fh}.
Therefore we can proceed by induction on the Fitting height of $[G,B]$.
We can take for the base of induction the case where $[G,B]=1$.

In the induction step,
the rank of $[G,B]/F([G,B])$ is $r$-bounded by the induction hypothesis, and it remains to show that the rank of $F([G,B])$ is $r$-bounded. The rank of $F([G,B])$ is equal to the rank of some Sylow $p$-subgroup  of $F([G,B])$. This Sylow $p$-subgroup of $F([G,B])$ is isomorphic to a subgroup of $P=O_{p',p}([G,B])/O_{p'}([G,B])$, so it is sufficient to prove that the rank of $P$ is $r$-bounded. Note that $O_{p'}([G,B])$ is a normal $B$-invariant subgroup, so we can pass to the quotient $G/O_{p'}([G,B])$ and assume that $O_{p'}([G,B])=1$, so that $P=O_p([G,B])$ is a normal $p$-subgroup of $G$.
Note that $P$ contains its centralizer in $[G,B]$ due to our assumption $O_{p'}([G,B])=1$.
The subgroup $[P,B]$ has $r$-bounded rank by Theorem~\ref{t-nilp}. Hence the group $B/C_B([P,B])$ has $r$-bounded rank by Lemma~\ref{l-raut}. Note also that $C_B([P,B])/C_B(P)$ is a $p$-group, since any $p'$-element centralizing both $[P,B]$ and $P/[P,B]$ must centralize $P$ due to coprime action.

The subgroup $[G,C_B(P)]$ centralizes $P$ by Lemma~\ref{l-ker} and therefore $[G,C_B(P)]\leqslant P$, so that $[G,C_B(P)]\leqslant Z(P)$. In particular, $[G,C_B(P)]$ is abelian and therefore has rank at most~$r$ being $r$-generated by hypothesis. Note that  $[G,C_B(P)]$ is a normal $B$-invariant subgroup. Hence it is sufficient to prove that $[\bar G,B]$ has $r$-bounded rank for $\bar G=G/[G,C_B(P)]$. Since $[\bar G,C_B(P)]=1$, the action of $B$ on $\bar G$ factors through to the natural action of $B/C_B(P)$. Since $C_B([P,B])/C_B(P)$ is a $p$-group, the rank of $[\bar G, C_B([P,B])]$ is $r$-bounded by Theorem~\ref{t-psol}. Note that $[\bar G, C_B([P,B])]$  is a normal $B$-invariant subgroup. Hence it is sufficient to prove that $[\widetilde  G,B]$ has $r$-bounded rank for $\widetilde  G=\bar G/[\bar G,C_B([P,B])]$. Since $[\widetilde  G,C_B([P,B])]=1$, the action of $B$ on $\widetilde  G$ factors through to the natural action of $B/C_B([P,B])$. Since  $B/C_B([P,B])$ has $r$-bounded rank, it is in particular generated by $r$-boundedly many elements, say, $b_1,\dots,b_{f(r)}$. Each of the subgroups $[\widetilde G,b_i]$ has $r$-bounded rank by Theorem~\ref{t-sol}. Since
 $$
 [\widetilde G,B]=\prod_{i=1}^{f(r)}[\widetilde G,b_i],
 $$
 the rank of $[\widetilde G,B]$ is $r$-bounded. This completes the proof of the  induction step. The theorem is proved.
\end{proof}

\end{document}